\documentclass{amsart}[11pt]
\textheight 9in
\textwidth 6.5in
\oddsidemargin 0in
\evensidemargin 0in
\topmargin -0.5in

\usepackage{amssymb, amscd, stmaryrd}
\usepackage{enumerate}
\usepackage[dvips,dvipdf]{graphicx}
\usepackage[usenames,dvipsnames]{color}

\usepackage{color}
\usepackage{color, colortbl}
\usepackage{multirow}
\usepackage[table]{xcolor}
\usepackage{comment}


                         %

\setcounter{tocdepth}{1}

%
%
\setlength{\marginparwidth}{1.12in}

%
%
%
%

\newcommand{\pa}{\partial}

\newcommand{\maT}{\mathcal T}

\newcommand{\maE}{\mathcal E}

\newcommand{\maL}{\mathcal L}

\newcommand{\maO}{\mathcal O}

\newcommand{\be}{\begin{eqnarray}}
\newcommand{\ee}{\end{eqnarray}}
\newcommand{\ben}{\begin{eqnarray*}}
\newcommand{\een}{\end{eqnarray*}}

\newcommand{\maV}{\mathcal V}

\newcommand{\bkappa}{\boldsymbol {\kappa}}

\newcommand{\bsigma}{\boldsymbol {\sigma}}

\newcommand{\bmu}{\boldsymbol {\mu}}
\newcommand{\bbeta}{\boldsymbol {\eta}}

\newcommand{\bone}{\textit{\textbf 1}}
\newcommand{\bzero}{\textit{\textbf 0}}
\newcommand{\ba}{\textit{\textbf a}}
\newcommand{\bb}{\textit{\textbf b}}












%
%
\newtheorem{theorem}{Theorem}[section]

\newtheorem{corollary}[theorem]{Corollary}

\newtheorem{lemma}[theorem]{Lemma}

\theoremstyle{definition}
\newtheorem{definition}[theorem]{Definition}
\theoremstyle{definition}
\newtheorem{algorithm}[theorem]{Algorithm}
\theoremstyle{remark}
\newtheorem{remark}[theorem]{Remark}



\author[H. Li]{Hengguang Li} \address{Hengguang Li, Department of Mathematics, Wayne State University, Detroit, MI 48202, USA}
\email{hli@math.wayne.edu}

\thanks{H. Li was partially supported by the NSF Grant  DMS-1418853 and by the Wayne State University Grants Plus Program.}


\begin{document}
\title[new anisotropic finite element methods on polyhedral domains]{A new anisotropic finite element method on polyhedral domains: interpolation error analysis}


\begin{abstract} On a  polyhedral domain $\Omega\subset\mathbb R^3$, consider the Poisson equation with the Dirichlet boundary condition. For  singular solutions from the non-smoothness of the domain boundary, we propose new anisotropic mesh refinement algorithms to improve the convergence of finite element approximation. The proposed algorithm is simple, explicit, and requires less geometric conditions on the mesh and on the domain. Then,  we develop interpolation error estimates in suitable weighted spaces for the anisotropic mesh, especially for the tetrahedra violating the maximum angle condition. These estimates can be used to design optimal finite element methods approximating singular solutions. We report numerical test results to validate the method.
\end{abstract}

\maketitle

\section{introduction}

Let $\Omega\subset \mathbb R^3$ be  a bounded polyhedral domain. Consider the Poisson equation with the  Dirichlet boundary condition, 
\begin{eqnarray}\label{eqn.n1}
-\Delta u=f\quad\text{in}\quad \Omega,  \qquad u=0\quad\text{on}\quad\partial \Omega.
\end{eqnarray}
The regularity of the solution  is determined by the smoothness of the boundary $\pa\Omega$ and the smoothness of the given data $f$. For example, when the domain has a smooth boundary, the solution continuously depends on the given data $f$ in Sobolev spaces with the full regularity estimate \cite{Evans98, LM172}
\begin{eqnarray}\label{eqn.fulll}
\|u\|_{H^{m+1}(\Omega)}\leq C\|f\|_{H^{m-1}(\Omega)},\qquad m\geq 0.
\ee
On domains with a non-smooth boundary, equation (\ref{eqn.n1})  usually possesses solutions with singularities near the non-smooth points, and therefore the estimate in (\ref{eqn.fulll}) no longer holds, even when $f$ is smooth. The lack of regularity  in the solution can cause severe  convergence issues in the numerical approximations \cite{CD02, CDS05,Wahlbin84}. 

Addressing critical problems both in theory and in practice, various  finite element methods (FEMs) approximating such singular solutions have been studied.  Intuitively, the accuracy of the numerical solution can be improved by increasing the mesh density near the singularity of the solution. For elliptic boundary value problems in two-dimensional (2D) polygonal domains, this idea has led to effective FEMs based on local mesh grading algorithms, in which the numerical approximation of singular solutions achieves the optimal convergence rate.  See \cite{Apel99, BKP79, BNZ205, BCS09,  LMN10,   MR0478669, Raugel78} and references therein.  The validation of these methods highly depends on the regularity estimate of the singular solution in special weighted Sobolev spaces (e.g., \cite{BNZ105, Dauge88, Grisvard85,MR2209521,Kondratiev67,  MR10,nicaise93}), which itself is an active research topic in mathematical analysis. 

For a three-dimensional (3D) polyhedral domain $\Omega$,  the solution is featured with different types of singularities:  the  vertex (conical) singularity and the (anisotropic) edge singularity. Thus, an anisotropic mesh is in general expected for a better finite element approximation. The combination of different types of singularities, together with  the complexity in the 3D  geometry, makes 
the development of optimal FEMs for equation (\ref{eqn.n1}) a more technically challenging task. Existing  algorithms on polyhedral domains usually require restrictive geometric conditions on the mesh and on the domain. Some relevant results are as follows. The mesh in \cite{MR1275108, MR2712527, LN195} is based on the method of dyadic partitioning. These meshes are  isotropic and optimal only for weaker singular solutions. The  mesh in  \cite{Apel99, AN98, ANS01, ASW96} is based on a coordinate transformation from a quasi-uniform mesh. It is anisotropic along the edges and requires confining angle conditions for the simplex. The  mesh in \cite{BLN12, BNZ107} is also anisotropic and leads to optimal convergence rate. The algorithm, however, requires extra steps for prism refinements to maintain the angle condition in the simplex. There are also tensor-product anisotropic meshes based on 2D  graded meshes \cite{AS02, MR3454358} that are usually effective on a domain with simple geometry.

The aim of this paper is twofold. First, we propose new anisotropic mesh refinement algorithms (Algorithm \ref{alg.n1}) for the finite element approximation of singular solutions in equation (\ref{eqn.n1}). These graded refinements are simple, explicit, and determined by a set of parameters associated to the singular set (vertices and edges) of the domain. Meanwhile, with less geometric requirements on the simplex and on the domain, these algorithms are  defined recursively based on direct decomposition of tetrahedra, and lead to conforming triangulations. Second, we develop $H^1$ interpolation error estimates  for the finite element space associated with the proposed anisotropic mesh. Due to the lack of regularity in the usual Sobolev space, these estimates are established for solutions in suitable weighted Sobolev spaces $\mathcal M^m_{\bmu}$ (Definition \ref{def.newweight}), in which the norm of the solution continuously depends on the given data $f$. Using the interpolation error estimates and weighted regularity results for the solution, one can decide the range of the grading parameters, such that the finite element solution approximates the singular solution in the optimal rate.  

A notable difference from the existing meshes is that our triangulation, with tetrahedral elements, in general violates the maximum angle condition  \cite{BA76}. Namely, the maximum interior angle in the triangular faces of the tetrahedra approaches $\pi$ as the level of refinements increases. To overcome the resulting difficulty in the error analysis, we develop technical tools through the following steps.   First, we classify tetrahedra into different types according to their relation with the singular set. For each tetrahedron type, we construct explicit linear transformations that map the tetrahedra to the reference element.  We show that the mapping is bounded, whose upper bound is independent of the refinement level. Then, we obtain the interpolation error estimate by proving that the lack of angle condition can be compensated for by different weights in the function space. The finite element error estimate (Theorem \ref{thm.mmm}) is an immediate consequence of the interpolation error analysis and the C\'ea Lemma.  The interpolation error estimate is independent of the regularity analysis for the solution.  The weighted space $\mathcal M^m_{\bmu}$ and some of its variants  are closely related to the Mellin transform  for non-smooth domains \cite{Kondratiev67, KMR01}, in which many rigorous regularity results have been established \cite{BCD03, MR2766898,MR1866274}. Thus,  using $\mathcal M^m_{\bmu}$ as the function space for the solution, to validate the proposed FEM, we here pay more attention on the connection between the grading parameters in the anisotropic mesh and the indices in the weighted space. Besides the results in this paper,  we  also expect that  the self-contained analytical techniques  developed here  will lead to new convergence results when similar weighted spaces are considered, and will be useful for other numerical studies of the the proposed anisotropic FEM.

The rest of the paper is organized as follows. In Section \ref{sec2}, we define the weighted Sobolev space and the finite element approximation to equation (\ref{eqn.n1}). In Section \ref{sec3}, we propose the 3D anisotropic mesh algorithm and discuss the resulting mesh properties. In Section \ref{sec4}, we give detailed interpolation error estimates on anisotropic meshes in weighed spaces. In Section \ref{sec5}, we report numerical test results on two model domains. These numerical results are in agreement with our theoretical prediction, and hence provide evidence for the validation of our method.

Throughout the text below, we adopt the bold notation for vector fields. Let $T$ be a triangle (resp. tetrahedron) with vertices $a, b, c$ (resp. $a, b, c, d$). Then, we denote $T$ by its vertices: $\triangle^3 abc$ for the triangle and  $\triangle^4abcd$ for the tetrahedron, where the sup-index implies the number of vertices for $T$. We denote by $ab$ the open line segment with endpoints $a$ and $b$ and denote by $\overrightarrow{ab}$ the vector from $a$ to $b$. By $a\sim b$ (resp. $a\lesssim b$), we mean there exists a constant $C>0$ independent of $a$ and $b$, such that $C^{-1}a\leq b\leq Ca$ (resp. $a\leq Cb$). In addition, when $A\subset B$, it is possible that $A=B$. The generic constant $C>0$ in our estimates  may be different at different occurrences. It will depend on the computational domain, but not on the functions  involved or the mesh level in the finite element algorithms.  

\section{Preliminaries} \label{sec2}
In this section, we introduce the weighted Sobolev space and the finite element approximation to equation (\ref{eqn.n1}), as well as other necessary notation and existing results. 
\subsection{Weighted Sobolev spaces}
Let $\mathcal V=\{v_\ell\}_{\ell=1}^{N_v}$ and  $\mathcal E=\{e_\ell\}_{\ell=1}^{N_e}$ be the set of vertices  and \textit{open} edges of $\Omega$, where  $N_v$ and $N_e$ are the numbers of the vertices and edges, respectively.    Let $N_s:=N_v+N_e$. Then, we denote  the singular set by $\mathcal S:=\{s_\ell\}_{\ell=1}^{N_s}=\maV\cup\maE$.  We number the elements in $\mathcal S$, such that  
\be\label{eqn.numbering}
s_\ell=v_\ell \quad {\rm{for}}\quad 1\leq \ell \leq N_v; \ s_\ell=e_{\ell-N_v}  \quad {\rm{for}}\quad N_v< \ell\leq N_s.
\ee
Namely, the first $N_s$ elements are vertices, while the last $N_e$ elements are edges.

Then, we classify different sub-regions based on their locations relative to the singular set $\mathcal S$.

\begin{definition}\label{def.domain} (The Domain Decomposition) For a vertex $v\in\mathcal V$,  let $\mathcal O_{v}\subset\Omega$  be a neighborhood of $v$, whose closure   does not contain any other vertices. Let $\mathcal G_v$ be the projection of $\mathcal O_v$ on the unit sphere $S^2$ centered   at $v$. Therefore, $\mathcal G_v$ is a polygon on $S^2$. Denote by $\mathcal E_v\subset\mathcal E$ the set of edges that touch $v$. Then, each edge $e\in\mathcal E_v$ corresponds to a vertex $v_e$ of the region $\mathcal G_v$.  Let $\mathcal O(v_e)\subset \mathcal G_v$ be a neighborhood of the vertex $v_e$, whose closure does not contain other vertices of $\mathcal G_v$. Then, using the spherical coordinates $(\rho, \vartheta)\in\mathbb R_+\times S^2$  centered at $v$, we define the neighborhood of the part of the edge $e\in\mathcal E_v$ close to $v$, $\mathcal O_e^v=\{(\rho, \vartheta)\in\mathcal O_v,\ \vartheta\in\mathcal O(v_e)\}$. Thus, $\Omega$ has the  decomposition
\begin{equation}\label{eqn.decomp}
\Omega=\Big(\cup_{v\in\mathcal V}\big(\mathcal O_v^o\cup(\cup_{e\in\mathcal E_v}\mathcal O^v_e)\big)\Big)\cup \Omega^o\cup(\cup_{e\in\mathcal E}\mathcal O_e^o), 
\end{equation}
where $\mathcal O^o_v=\mathcal O_v\setminus (\cup_{e\in\mathcal E_v}\mathcal O^v_e)$,  $\Omega^o$ is an interior region of $\Omega$ away from the singular points, and   $\mathcal O_e^o=\Omega\setminus\big(\Omega^o\cup(\cup_{v\in\mathcal V}\mathcal O_v)\big)$. Namely, $\Omega$ is decomposed into four components: (I) $\cup_{v\in\mathcal V}(\cup_{e\in\mathcal E_v}\mathcal O^v_e)$,  the neighborhood of the part of edges close to vertices;  (II) $\cup_{e\in\mathcal E}\mathcal O_e^o$, the neighborhood  of the part of edges away from vertices; (III) $\cup_{v\in\mathcal V}\mathcal O^o_v$, the sub-region of the neighborhood of the vertices that does not contain edge points; (IV) $\Omega^o$, the interior part away from  the singular set $\mathcal S$. \end{definition}

With this domain decomposition, we define the following weighted Sobolev space. 

\begin{definition}\label{def.newweight} (Anisotropic Weighted Spaces) Let $H^m$, $m\geq 0$, be the usual Sobolev space that consists of functions whose $k$th derivatives are square-integrable for $0\leq k\leq m$.  Let $H^{m}_{loc}(\Omega):=\{v, \ v\in H^m(\omega)\}$, where $\omega$ is   any open subset with compact closure $\bar\omega\subset\Omega$. Let $\rho_{v}(x)$ and $\rho_{e}(x)$ be distance functions from $x\in\Omega$ to the vertex  $v\in\maV$ and to the edge  $e\in\maE$, respectively. Within the neighborhood $\mathcal O^v_e$, let $\rho_{e, v}=\rho_e/\rho_v$ be the angular distance. 
 In the neighborhoods $\mathcal O^o_e$ and $\mathcal O^v_e$, we choose a local Cartesian coordinate system in which the edge $e\in\mathcal E$ lies on the $z$-axis. Let $\alpha_{\perp}=(\alpha_1, \alpha_2)$  consist of the first two entries of the multi-index $\alpha=(\alpha_1, \alpha_2, \alpha_3)\in \mathbb Z^3_{\geq 0}$. Therefore,  in $\mathcal O^o_e$ and $\mathcal O^v_e$, $\partial^{\alpha_\perp}=\pa_x^{\alpha_1}\pa_y^{\alpha_2}$ is a partial derivative in a direction perpendicular to the $z$-axis. Recall the singular set $\mathcal S$ in (\ref{eqn.numbering}). Then, given an $N_s$-dimensional vector $\bmu=(\mu_1, \cdots, \mu_{N_s})$, we define the anisotropic weighted space
\begin{eqnarray}\label{eqn.newweight}
\mathcal{M}^{m}_{\bmu}(\Omega):=\{v\in H^{m}_{loc}(\Omega),\ \rho_{v}^{|\alpha|-\mu_v}
    \partial^\alpha v \in L^2(\mathcal O^o_{v}), \  \rho_{e}^{|\alpha_\perp|-\mu_{e}}
    \partial^\alpha v \in L^2(\mathcal O^o_{e}),\\\ \rho_{v}^{|\alpha|-\mu_v}\rho_{e, v}^{|\alpha_\perp|-\mu_e}
    \partial^\alpha v \in L^2(\mathcal O^{v}_{e}),\ 
    \forall |\alpha|\leq m
    \}, \nonumber
\end{eqnarray}
where $\mu_v$ and $\mu_e$ are the entries in $\bmu$ that have the same sub-indices as those of $v$ and $e$ in $\mathcal S$. 
Thus, for $1\leq \ell\leq N_v$, $\mu_\ell$ specifies the weight associated to the vertex $v_\ell\in\mathcal V$;  and for $N_v<\ell\leq N_s$,  $\mu_\ell$ gives the weight associated to  the edges $e_{\ell-N_v}\in\mathcal E$.  For any $v\in\mathcal M^m_{\bmu}(\Omega)$, the associated norm is
\begin{eqnarray*}\|v\|^2_{\mathcal{M}^{m}_{\bmu}(\Omega)}:=\|v\|^2_{H^m(\Omega^o)}+\sum_{|\alpha|\leq m}\big(\sum_{v\in \maV}[\|\rho_{v}^{|\alpha|-\mu_v}
    \partial^\alpha v\|_{L^2(\mathcal O^o_{v})}^2+\sum_{\{e\in\mathcal E, \bar e\cap v=v\}}\|\rho_{v}^{|\alpha|-\mu_v}\rho_{e, v}^{|\alpha_\perp|-\mu_e}
    \partial^\alpha v\|^2_{L^2(\mathcal O^{v}_{e})}]\\
    +\sum_{e\in\maE}\| \rho_{e}^{|\alpha_\perp|-\mu_{e}}
    \partial^\alpha v\|_{L^2(\mathcal O^o_{e})}^2\big).
    \end{eqnarray*}
In this paper,  all the vectors denoted by the bold font  have $N_s$ entries. For any two  vectors $\ba$ and $\bb$, we write $\ba < (\leq, >, \geq)\ \bb$ if each entry $a_\ell< (\leq,>, \geq)\ b_\ell$, $1\leq \ell\leq N_s$. We denote by $\bone$ (resp. $\bzero$) the constant $N_s$-dimensional vectors with all entries being $1$ (resp. $0$). 
\end{definition}
Note that the distance functions in the space $\mathcal M^{m}_{\bmu}$ are determined by the  location of the singular set $\mathcal S$. Thus, they only depend on the domain $\Omega$, and remain the same for any sub-region of $\Omega$.
\begin{remark} 
The space $\mathcal M^{m}_{\bmu}$ is anisotropic in the sense that the transverse derivatives $\partial^{\alpha_\perp}$ and the longitudinal derivatives along the edge play different roles in the formulation. Compared with the isotropic weighted spaces \cite{BCD03}
\begin{eqnarray}\label{eqn.n2}
\mathcal{K}^{m}_{\bmu}(\Omega):=\{v\in H^{m}_{loc}(\Omega),\ \rho_{v}^{|\alpha|-\mu_v}
    \partial^\alpha v \in L^2(\mathcal O^o_{v}), \  \rho_{e}^{|\alpha|-\mu_{e}}
    \partial^\alpha v \in L^2(\mathcal O^o_{e}),\nonumber\\\ \rho_{v}^{|\alpha|-\mu_v}\rho_{e, v}^{|\alpha|-\mu_e}
    \partial^\alpha v \in L^2(\mathcal O^{v}_{e}),\ 
    \forall |\alpha|\leq m
    \}, \nonumber
\end{eqnarray}
the space $\mathcal M^{m}_{\bmu}$ is suitable to describe the anisotropic behavior of singular solutions, especially the additional regularity in the edge direction. For example, define the vector $\bbeta$, such that
\begin{equation}\label{eqn.eta}
\eta_\ell=\sqrt{\lambda_\ell+1/4} \ \ \text{for}\ 1\leq\ell\leq N_v\ \ \text{and}\ \ \eta_{\ell}={\pi}/{\omega_{\ell}}\ \  \text{for}\ N_v<\ell\leq N_s,
\end{equation}
where  $\lambda_\ell>0$ is the smallest positive eigenvalue of the Laplace-Beltrami operator with the zero Dirichlet boundary condition on the polygon $\mathcal G_{v_\ell}$ in the unit sphere $S^2$ centered at $v_\ell$, and $\omega_\ell$ is the interior dihedral angle of the edge $e_{\ell-N_v}\in\maE$. Then,   for $m\geq 0$, the solution $u\in H^1_0(\Omega)$ of equation \eqref{eqn.n1},  satisfies  \cite{BCD03, CDN14} 
\begin{eqnarray}\label{eqn.regani}
\|u\|_{\mathcal M^{m}_{\ba+\bone}(\Omega)}\leq C\|f\|_{\mathcal M^{m}_{\ba-\bone}(\Omega)},\ \qquad {\rm{for}}\ \bzero\leq \ba< \bbeta.
\end{eqnarray}
This shows the continuous dependence of the solution on the given data in weighted spaces, despite the lack of regularity in usual Sobolev spaces.
\end{remark}
\begin{remark}  Note that the estimate (\ref{eqn.regani}) does not give a shifting in the index $m$. In fact, a smoother $f$ is expected  in order for $u$ to be in $\mathcal M^{m+2}_{\ba+\bone}(\Omega)$ \cite{BNZ107}. This, however, requires sophisticated regularity analysis that we will address in a forthcoming paper. Nevertheless, our  goal in this paper is to propose new anisotropic finite element algorithms and develop  interpolation error estimates in suitable weighted spaces. These estimates will also facilitate the finite element analysis for singular solutions when other anisotropic regularity results become available. Hence, from now on, we assume the solution of equation (\ref{eqn.n1}) satisfies 
$$u\in \mathcal M^{m+1}_{\bsigma+\bone}(\Omega) \qquad {\rm{for}}\ m\geq 1,$$ where  $\bsigma>\bzero$ will be specified later. 
\end{remark}

\subsection{The finite element method}
Recall that $H^1_0(\Omega)\subset H^1(\Omega)$ is the subspace consisting of functions with  zero trace on $\pa\Omega$. 
The variational solution $u\in H^1_0(\Omega)$ of equation (\ref{eqn.n1}) satisfies
\begin{equation*}
a(u, v)=\int_{\Omega}\nabla u\cdot \nabla v dx=\int_{\Omega}fvdx=(f, v),\quad \forall v\in H^1_0(\Omega).
\end{equation*}
Let $\maT_n$ be a triangulation of $\Omega$ with tetrahedra. Let $S_n\subset H^1_0(\Omega)$ be the Lagrange finite element space of degree $m\geq 1$ associated with  $\maT_n$. Namely, $S_n=\{v\in C(\Omega),\ v|_{T}\in \mathbb P_m, \  {\rm{for\ any\ tetrahedron}}\ T\in \maT_n\}$, where $\mathbb P_m$ is the space of polynomials of degree $\leq m$. Then, the finite element solution $u_n\in S_n$ for equation (\ref{eqn.n1}) is defined by 
\begin{equation}\label{eqn.fems1}
a(u_n, v_n)=(f, v_n),\quad \forall v_n\in S_n.
\end{equation}
\begin{remark} By the Poincar\'e inequality, the bilinear form $a(\cdot, \cdot)$ is both continuous and coercive on $H^1_0(\Omega)$. Thus, the C\'ea Lemma \cite{BS02,Ciarlet78} gives rise to 
\be\label{eqn.subo111}
\|u-u_n\|_{H^1(\Omega)}\leq \inf_{v_n\in S_n}\|u-v_n\|_{H^1(\Omega)}.
\ee
On a standard quasi-uniform triangulation $\maT_n$, it is well known that the limited regularity of  $u$  in the usual Sobolev space may result in a \textit{sub-optimal} convergence rate  for the finite element approximation. Namely, 
\be\label{eqn.subo}
\|u-u_n\|_{H^1(\Omega)}\leq Ch^{s}\|u\|_{H^{s+1}(\Omega)}, 
\ee
where $h$ is the mesh size in $\maT_n$ and  $0<s<m$ depends on the geometry of the domain. 
\end{remark}

\section{Anisotropic mesh algorithms} \label{sec3}
In this section, we propose new 3D anisotropic mesh algorithms for the finite element approximation of singular solutions of equation (\ref{eqn.n1}). We first classify tetrahedra in the triangulation based on their relation with the singular set $\mathcal S$.

\begin{definition}\label{def.n1} (Tetrahedron Types) Recall the vertex set $\mathcal V$, the edge set $\mathcal E$, and $\mathcal S=\maV\cup\maE$. For a tetrahedron $T$,  we say $T$ contains a \textit{singular edge} if one of its edges lies on an edge in $\maE$. Let $x$ be a vertex of $T$.  We say $x$ is   a \textit{singular vertex} of $T$ if $x\in \maV$, or $x\in e$ for some edge  $e\in\maE$ but none of $T$'s edges lies on $e$. Let $\mathcal T$ be an initial triangulation of $\Omega$, such that (I) each tetrahedron contains at most one singular vertex and at most one singular edge; (II) if a tetrahedron contains both a singular vertex and a singular edge, the singular vertex  is an endpoint of the singular edge.  Then, each tetrahedron $T\in \mathcal T$ falls into one of the five categories. 
\begin{itemize}
\item[1.] $o$-tetrahedron: $\bar T \cap \mathcal S=\emptyset$.
\item[2.]  $v$-tetrahedron:  $\bar T\cap \mathcal S$ is a vertex in $\maV$.
\item[3.] $v_e$-tetrahedron: $\bar T\cap \mathcal S$ is an interior point of an edge in $\maE$.
\item[4.]  $e$-tetrahedron:   $\bar T \cap \mathcal S$ is an edge of $T$, which lies on an edge in $\mathcal E$ but contains no vertex  in $\mathcal V$.
\item[5.]  $ev$-tetrahedron: $\bar T\cap \mathcal S$ contains  a vertex  $v\in\mathcal V$ and  an edge of $T$ that lies on an  edge in $\mathcal E$ joining $v$.
\end{itemize}
\end{definition}

Then, we present our  anisotropic mesh algorithm.

\begin{figure}
\includegraphics[scale=0.57]{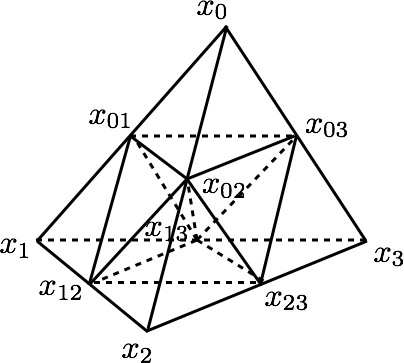}\hspace{0.2cm}
\includegraphics[scale=0.27]{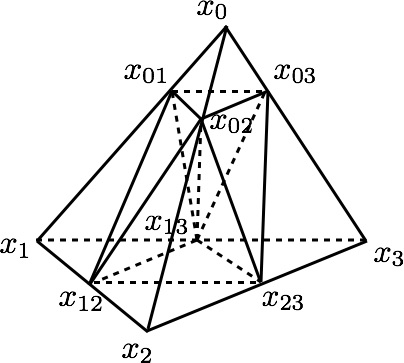}\hspace{.2cm}
\includegraphics[scale=0.27]{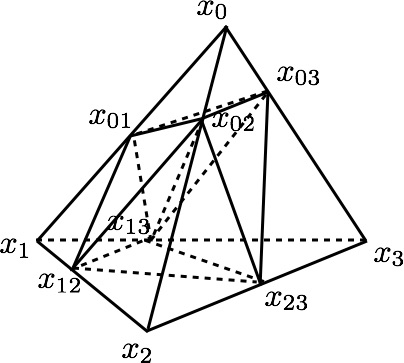}\hspace{.2cm}\\
\includegraphics[scale=0.27]{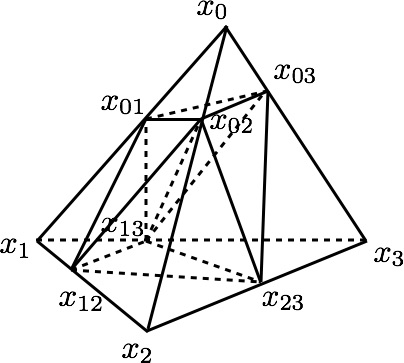}\hspace{.6cm}
\includegraphics[scale=0.27]{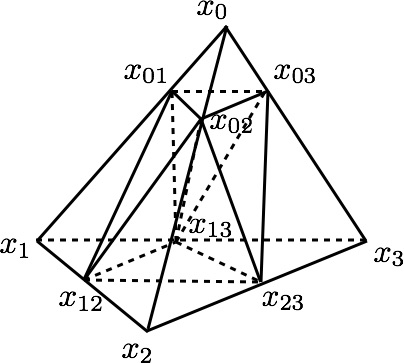}
\caption{{Decompositions for a tetrahedron $\triangle^4x_0x_1x_2x_3$,  top row (left -- right) :  $o$-tetrahedron,   $v$- or $v_e$-tetrahedron,  $e$-tetrahedron;  bottom row  (left -- right):  an $ev$-tetrahedron  ($\kappa_{ev}=\kappa_e$), an $ev$-tetrahedron ($\kappa_{ev}=\kappa_v$).}}\label{fig.n3}
\end{figure}

\begin{algorithm}\label{alg.n1} (Anisotropic Refinement)  Let $\mathcal T$ be a triangulation of $\Omega$ as in Definition \ref{def.n1}. For each  element $s_\ell\in\mathcal S$, $1\leq \ell\leq N_s$, we associate a grading parameter $\kappa_\ell\in(0, 1/2]$. Let $T=\triangle^4x_0x_1x_2x_3\in \mathcal T$ be a tetrahedron  with  vertices $x_0, x_1, x_2$, $x_3$, such that $x_0$ is  the singular vertex if $T$ is a $v$-, $v_e$-, or $ev$-tetrahedron; and $x_0x_1$ is the singular edge if $T$ is an $e$- or $ev$-tetrahedron. Let $\bkappa=(\kappa_1, \cdots, \kappa_{N_s})$ be the collection of the grading parameters, such that each $\kappa_\ell$ corresponds to an element $s_\ell$ in the singular set $\mathcal S$. Then, 
 the refinement,  denoted by $\bkappa(\mathcal T)$,  proceeds as follows.  We first
    generate new nodes $x_{kl}$, $0\leq k<l\leq 3$, on each edge $x_kx_l$ of
    $T$, based on its type.
    \begin{itemize}
 \item[(I)] ($T$ is an $o$-tetrahedron.):  $x_{kl}=(x_k+x_l)/2$.
  \item[(II)] ($T$ is a $v$-tetrahedron.): Suppose $x_0=s_\ell\in\maV$ ($1\leq \ell\leq N_v$). Define $\kappa_v:=\kappa_\ell$. Let $I_v:=\{\ell,\ v{\rm{\ is\ an\ endpoint\ of}}\ e_{\ell-N_v}\}$ be the index set for edges touching $v$. Define $\kappa=\kappa_{ev}:=\min_{\ell\in I_v}(\kappa_v, \kappa_\ell)$. Then,  $x_{kl}=(x_k+x_l)/2$ for $1\leq k< l\leq 3$;  $x_{0l}=(1-\kappa)x_0+\kappa x_l$ for $1\leq l\leq 3$.
  \item[(III)] ($T$ is a $v_e$-tetrahedron.): Suppose $x_0=\overline {x_0x_1}\cap s_\ell$, $(N_v< \ell\leq N_s)$, namely, $s_\ell\in\maE$.  We define $\kappa=\kappa_e:=\kappa_\ell$. Then,  $x_{kl}=(x_k+x_l)/2$ for $1\leq k< l\leq 3$;  $x_{0l}=(1-\kappa)x_0+\kappa x_l$ for $1\leq l\leq 3$.

  \item[(IV)] ($T$ is an $e$-tetrahedron.): Suppose $x_0x_1\subset e_{\ell-N_v}=s_\ell\in\mathcal E$ ($N_v< \ell\leq N_s$). Define $\kappa_e:=\kappa_\ell$.  Then, $x_{kl}=(1-\kappa_{e})x_k+\kappa_{e} x_l$ for $0\leq k\leq 1$ and $2\leq l\leq 3$; $x_{01}=(x_0+x_1)/2$, $x_{23}=(x_2+x_3)/2$.
  \item[(V)] ($T$ is an $ev$-tetrahedron.): Suppose $x_0=v_\ell=s_\ell \in \mathcal V$ ($1\leq \ell\leq N_v$) and $x_0x_1\subset e_{\ell'-N_v}=s_{\ell'}\in\mathcal E$ ($N_v< \ell'\leq N_s$). Define $\kappa_v:=\kappa_\ell$, $\kappa_e:=\kappa_{\ell'}$, and  $\kappa_{ev}:=\min_{\ell\in I_v}(\kappa_v, \kappa_\ell)$, where $I_v$ is the index set defined  in (II). Then, for $2\leq l\leq 3$, $x_{0l}=(1-\kappa_{ev})x_{0}+\kappa_{ev} x_l$ and $x_{1l}=(1-\kappa_{e})x_{1}+\kappa_{e} x_l$; $x_{01}=(1-\kappa_v)x_0+\kappa_v x_1$, $x_{23}=(x_2+x_3)/2$. 
  \end{itemize}
Connecting
    these nodes $x_{kl}$ on all the faces of $T$, we obtain four sub-tetrahedra
    and one octahedron. The octahedron then is cut into four
    tetrahedra using $x_{13}$ as the common vertex. Therefore, after
    one refinement, we obtain eight sub-tetrahedra  for each $T\in\mathcal T$ denoted by their vertices:
\begin{eqnarray*}
  &\triangle^4x_0x_{01} x_{02} x_{03}, \ \triangle^4x_1 x_{01} x_{12}
  x_{13},\ \triangle^4x_2 x_{02} x_{12} x_{23},\ \triangle^4x_3 x_{03} x_{13}
  x_{23},\\
  &\triangle^4x_{01} x_{02} x_{03} x_{13}, \ \triangle^4x_{01} x_{02} x_{12}
  x_{13},\ \triangle^4x_{02} x_{03} x_{13} x_{23},\ \triangle^4x_{02} x_{12}
  x_{13} x_{23}.
\end{eqnarray*}
See Figure \ref{fig.n3} for different types of decompositions.  Given an initial mesh $\mathcal T_0$ satisfying the condition in Definition \ref{def.n1}, the associated family of anisotropic meshes $\{\mathcal T_n,\ n\geq0\}$ is defined recursively $\mathcal T_n=\bkappa(\mathcal T_{n-1})$. See Figure \ref{fig.n4} for example.
\end{algorithm}

\begin{figure}
\includegraphics[scale=0.10]{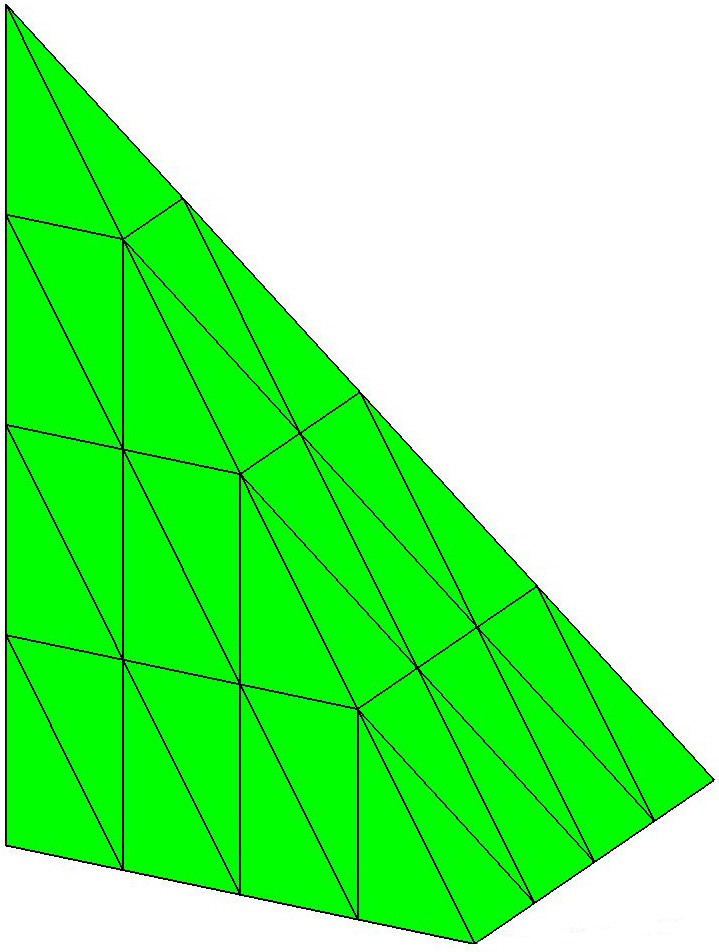} \hspace{1.5cm}
\includegraphics[scale=0.10]{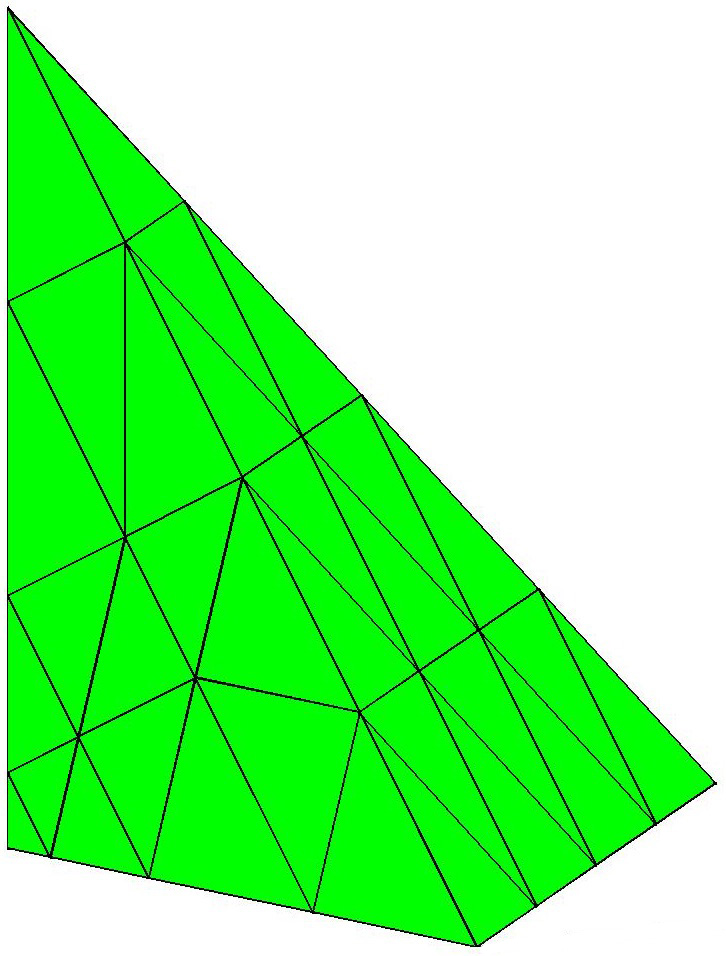}\hspace{1.5cm}
\includegraphics[scale=0.10]{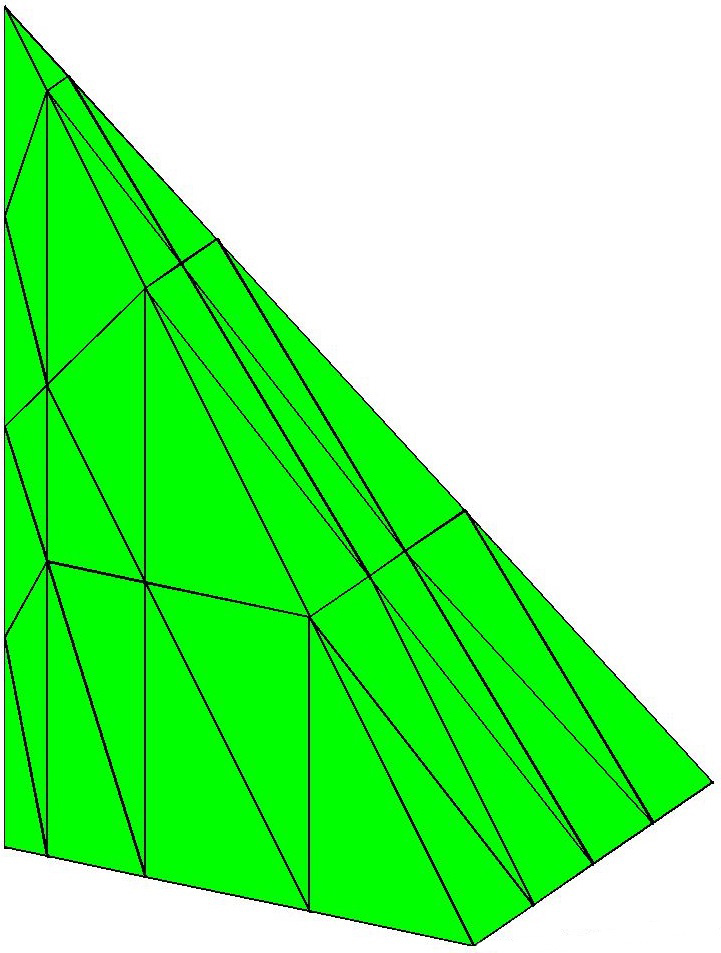}\\
\includegraphics[scale=0.10]{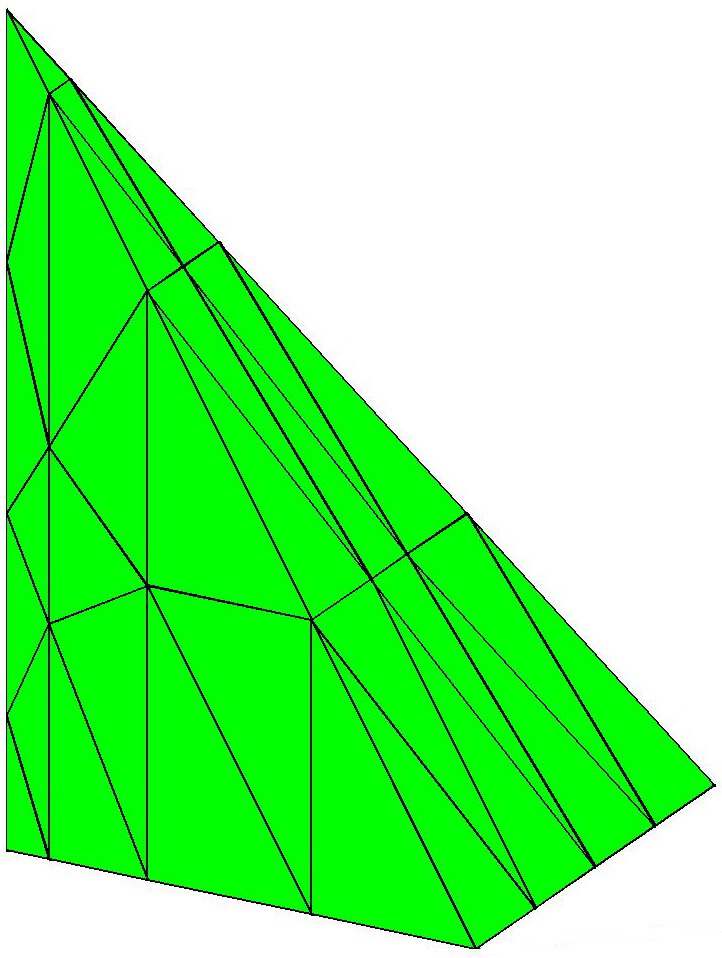}\hspace{2cm}
\includegraphics[scale=0.10]{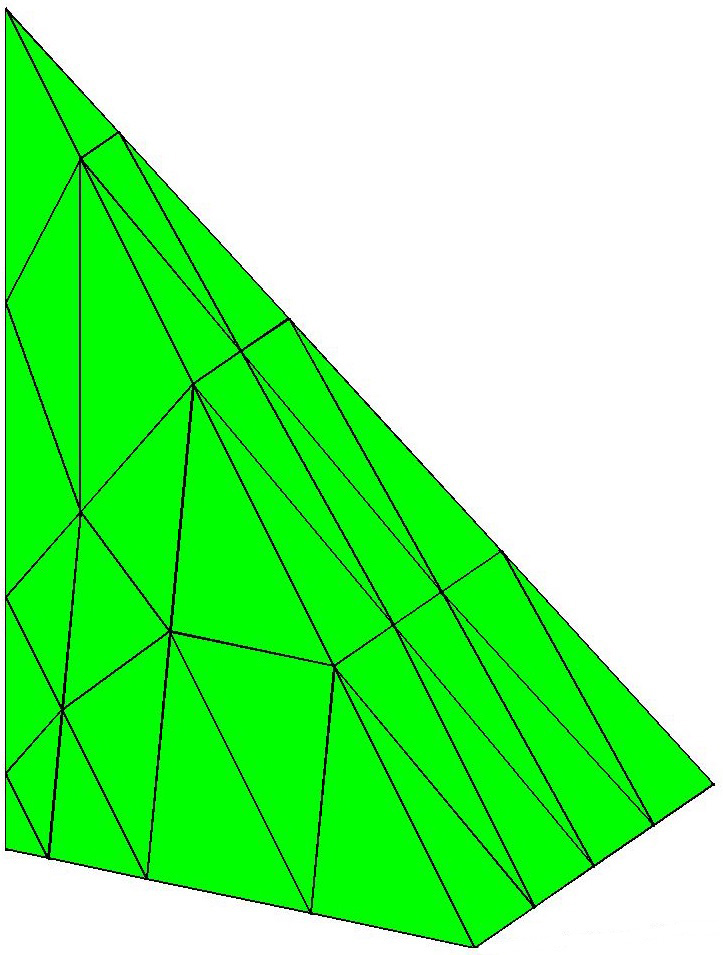}
\caption{Anisotropic triangulations  after two consecutive refinements on  a tetrahedron, top row  (left -- right): $o$-tetrahedron, $v$- or $v_e$-tetrahedron  ($\kappa=0.3$), $e$-tetrahedron ($\kappa_{e}=0.3$); bottom row (left -- right):  $ev$-tetrahedron ($\kappa_{ev}=0.3, \kappa_v=0.4, \kappa_{e}=0.3$), $ev$-tetrahedron ($\kappa_{ev}=0.3, \kappa_{v}=0.3, \kappa_e=0.4$).}\label{fig.n4}
\end{figure}

\begin{remark} Algorithm \ref{alg.n1} first assigns to each singular element $s_\ell\in\mathcal S$ a grading parameter $\kappa_\ell$, which can be regarded as an indicator of the severity of the singularity at $s_\ell$. A smaller value of $\kappa_\ell$ leads to a higher mesh density  near $s_\ell$, while the value $\kappa_\ell=1/2$ corresponds to a quasi-uniform refinement.  It is apparent that our meshing method results in very different mesh geometries. In a region away from the singular set $\mathcal S$ (i.e., $\Omega^o$), the mesh is isotropic and quasi-uniform. The local refinement for a $v$- or $v_e$-tetrahedron in fact follows the same rule: the mesh is isotropic and graded toward the vertex $x_0$ based on the  grading parameter $\kappa$ associated to the vertex $x_0$. In the neighborhood $\maO_e^o$ of an edge away from the vertices, the resulting mesh in general is anisotropic and graded toward the  edge $e\in\maE$. The mesh refinement in $\maO_e^v$ depends on the parameters $\kappa_v$ and $\kappa_\ell$, $\ell\in I_v$, which is also anisotropic, graded toward both the  edge $e\in\maE$ and the vertex $v\in\maV$.
\end{remark}

\begin{remark}
Our anisotropic refinements also generate tetrahedra with different shape regularities. A direct calculation shows that successive  refinements of an $o$-tetrahedron produce tetrahedra within  three similarity classes \cite{Bey95};  refinements for a $v$- or $v_e$-tetrahedron  produce tetrahedra within  22 similarity classes (Remark 3.4 in \cite{HLNU}). However,  refinements for an $e$- or $ev$-tetrahedron   lead to anisotropic meshes toward the edge that in general do not preserve the maximum angle condition. Namely, the maximum edge angle in the face of the tetrahedron approaches $\pi$ as the level of refinement $n$ increases. This is  a main difficulty that we shall overcome in the error analysis.
 \end{remark}


\begin{remark}
 Compared with existing 3D graded mesh refinements \cite{Apel99, AN98, BNZ107, MR3454358}, the proposed algorithm  has a few notable properties: 1. it is simple, explicit, and defined recursively; 2. the meshes $\mathcal T_j$, $j\leq n$, are conforming and the associated finite element spaces $S_j$ are nested; 3. the algorithm  results in a triangulation with the same topology and data structure as the usual 3D uniform mesh \cite{Bey95}, and also provides  the flexibility to adjust the grading parameters for vertex and edge singularities on general polyhedral domains. In what follows, we shall obtain interpolation error estimates for singular solutions on such meshes, which in turn imply that our mesh   can effectively improve the convergence of the finite element approximation.\end{remark}

To simplify the exposition in the next section, for a singular element in $\mathcal S$, we now define another set of parameters associated with $\kappa_v, \kappa_e$, and $\kappa_{ev}$ (Algorithm \ref{alg.n1}). When  $s_\ell\in\mathcal S$ is a vertex $v$, let $a_v$ be such that 
\be
\kappa_v=\kappa_\ell=2^{-m/a_v}.\label{eqn.kv1}
\ee
When $s_\ell\in\mathcal S$ is an edge $e$, let $a_e$ be such that
\be
\kappa_e=\kappa_\ell=2^{-m/a_e}\label{eqn.ke1}.
\ee
In addition,  we define the constant  $a_{ev}=\min_{\ell\in I_v}(a_v, a_\ell)$, where $I_v$ is the index set defined in (II) of Algorithm \ref{alg.n1}. Therefore, 
\be\label{eqn.kev1}
\kappa_{ev}=2^{-m/a_{ev}}.
\ee
Then, we denote by $\ba=(a_1, \cdots, a_{N_s})$ the collection of these mesh parameters 
\be\label{eqn.aell}
a_\ell:=\left\{\begin{array}{ll}
a_{ev}, \qquad \qquad & {\rm{if}}\ v=s_\ell;\\
a_e, \qquad \qquad & {\rm{if}}\ e=s_\ell,
\end{array}\right. \qquad 1\leq \ell\leq N_s.
\ee
Here, $m\geq 1$ is the polynomial degree in the finite element approximation (\ref{eqn.fems1}). Since $\kappa_\ell\in(0, 1/2]$, it is clear that $0<a_\ell\leq m$.

\section{Interpolation error estimates}\label{sec4}
In this section, we develop analytical tools and obtain interpolation error estimates on the proposed anisotropic mesh.  
Let $\maT_0$ be an initial triangulation of the domain $\Omega$ with tetrahedra  that satisfy the condition in Definition \ref{def.n1}. Recall $\maT_n$ is the mesh obtained after $n$  successive refinements based on the parameter $\bkappa$. 
Throughout this section, we let $h:=2^{-n}$ be the mesh parameter of $\maT_n$. For a continuous function $v$, we let $v_{I}$ be its Lagrange nodal interpolation associated to the underlying mesh.

Note that the tetrahedra in the initial mesh $\maT_0=\{T_{(0),j}\}_{j=1}^J$ are all shape regular and can be classified into five categories (Definition \ref{def.n1}). Thus, with the triangulation $\maT_n$, the  interpolation error estimates on $\Omega$  break down  into the interpolation error estimates on the sub-regions of $\Omega$, each of which is represented by an initial tetrahedron  $T_{(0),j}\in\maT_0$. 

In addition, we mention that  based on the definition,  the space $\mathcal M_{\bmu}^{m+1}$, $m\geq 1$, regardless of the sub-index $\bmu$, is equivalent to the Sobolev space $H^{m+1}$ on any sub-region of $\Omega$ that is away from the singular set $\mathcal S$. Therefore, by the Sobolev embedding Theorem, $u\in\mathcal M_{\bmu}^{m+1}(\Omega)$ is continuous at each nodal point in the interior of the domain. On the boundary of the domain, we set $u_I=0$ due to the boundary condition. This makes the interpolation $u_I$ well defined.

\subsection{Estimates on initial $o$-, $v$-, and $v_e$-tetrahedra in $\maT_0$} We first have the estimate for an $o$-tetrahedron in the initial mesh.
\begin{lemma} \label{lem.ote}
Let $T_{(0)}\in\maT_0$ be an $o$-tetrahedron. For $u\in\mathcal M^{m+1}_{\ba+\bone}(\Omega)$, where $\ba$ is given in (\ref{eqn.aell}), let $u_I$ be its nodal interpolation on $\maT_n$. Then, we have
\begin{eqnarray}
|u-u_I|_{H^1(T_{(0)})}\leq Ch^{m}\|u\|_{\mathcal M^{m+1}_{\ba+\bone}(T_{(0)})},
\end{eqnarray}
where $h=2^{-n}$ and $C$ is independent of $n$ and $u$.
\end{lemma}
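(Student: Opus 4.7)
The plan is to reduce this case to the standard quasi-uniform finite element interpolation theory, since an $o$-tetrahedron in $\mathcal{T}_0$ is by definition separated from the singular set $\mathcal{S}$. There are three facts to assemble: that the mesh produced on $T_{(0)}$ by successive refinements is quasi-uniform and shape-regular, that the weighted norm $\mathcal{M}^{m+1}_{\ba+\bone}$ is equivalent to the standard Sobolev norm $H^{m+1}$ on $T_{(0)}$, and finally the textbook Bramble--Hilbert estimate.

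First, I would observe that by Algorithm \ref{alg.n1}(I), every descendant of an $o$-tetrahedron in $\mathcal{T}_n$ is produced by pure midpoint (Bey) refinement. As noted in the remark following the algorithm, this generates tetrahedra lying in only three similarity classes, so $\mathcal{T}_n \cap T_{(0)}$ is a shape-regular quasi-uniform triangulation with mesh size comparable to $h = 2^{-n}$. Uniform shape regularity is what will be needed to invoke standard interpolation estimates with a constant independent of $n$.

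Next, since $\overline{T_{(0)}} \cap \mathcal{S} = \emptyset$, the distance functions $\rho_v$ and $\rho_e$ appearing in Definition \ref{def.newweight} are bounded above and below away from zero uniformly on $T_{(0)}$. Every weight factor of the form $\rho_v^{|\alpha|-\mu_v-1}$ or $\rho_e^{|\alpha_\perp|-\mu_e-1}$ is therefore pinched between two positive constants, so
\[
\|u\|_{H^{m+1}(T_{(0)})} \sim \|u\|_{\mathcal{M}^{m+1}_{\ba+\bone}(T_{(0)})},
\]
with constants depending only on $\Omega$ and on $\mathrm{dist}(T_{(0)},\mathcal{S})$. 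In particular, $u\in\mathcal{M}^{m+1}_{\ba+\bone}(\Omega)$ lies in $H^{m+1}(T_{(0)})$.

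The last step is to apply the standard Lagrange interpolation error estimate of degree $m$ on each $T\in\mathcal{T}_n\cap T_{(0)}$ via the Bramble--Hilbert lemma and a scaling argument on the reference tetrahedron, obtaining $|u-u_I|_{H^1(T)} \leq C h^m |u|_{H^{m+1}(T)}$. Summing the squares over $T\in\mathcal{T}_n\cap T_{(0)}$ and invoking the norm equivalence of the previous step gives the claimed bound. There is no real obstacle in this case: it is essentially the warm-up result, and all the delicate issues (violation of the maximum angle condition, geometric decomposition into types, exploiting the anisotropic weights to compensate for degenerate angles) are postponed to the $e$- and $ev$-tetrahedra analyzed in the subsequent lemmas.
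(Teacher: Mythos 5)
Your proposal is correct and follows essentially the same route as the paper: quasi-uniformity of the refined mesh on an $o$-tetrahedron, equivalence of $\mathcal M^{m+1}_{\ba+\bone}$ with $H^{m+1}$ away from the singular set, and the standard interpolation estimate. The paper's proof is just a compressed version of your three steps, so there is nothing to add.
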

\begin{proof} Based on Algorithm \ref{alg.n1},  the restriction of $\maT_n$ on $T_{(0)}$ is a  quasi-uniform mesh with  size $O(2^{-n})$. Since  $H^{m+1}(T_{(0)})$ is equivalent to $\mathcal M^{m+1}_{\ba+\bone}(T_{(0)})$ on an $o$-tetrahedron, by the standard interpolation error estimate, we have 
$$
|u-u_I|_{H^1(T_{(0)})}\leq C2^{-nm}\|u\|_{H^{m+1}(T_{(0)})}\leq Ch^{m}\|u\|_{\mathcal M^{m+1}_{\ba+\bone}(T_{(0)})}.
$$
This completes the proof.
\end{proof}

For a $v$- or $v_e$-tetrahedron in $\maT_0$, we first  identify its sub-regions that  have comparable distances to the singular vertex. 
\begin{definition} \label{def.vlayer}(Mesh Layers in $v$- and $v_e$-tetrahedra) Let $T_{(0)}=\triangle^4x_0x_1x_2x_3\in \maT_0$ be either a $v$- or a $v_e$-tetrahedron with $x_0\in\maV$ or $x_0\in e\in \maE$. We use a local Cartesian coordinate system, such that $x_{0}$ is the origin.  For $1\leq i\leq n$, the $i$th refinement on $T_{(0)}$ produces a small tetrahedron with $x_{0}$ as a vertex and with one face, denoted by $P_{v,i}$, parallel to the face $\triangle^3x_1x_2x_3$ of $T_{(0)}$. See Figure \ref{fig.n3} for example. 

Then, after $n$ refinements, we define the $i$th mesh layer $L_{v, i}$ of  $T_{(0)}$, $1\leq i<n$, as the region in $T_{(0)}$ between $P_{v,i}$ and $P_{v, i+1}$. We denote by $L_{v, 0}$ the region in  $T_{(0)}$ between $\triangle^3x_1x_2x_3$ and $P_{v, 1}$; and let $L_{v, n}$ be the small tetrahedron with $x_0$ as a vertex that is  bounded by $P_{v, n}$ and three faces of $T_{(0)}$. Since  it is clear that $x_0$ is the only point for the special refinement, we drop the sub-index $\ell$ in the grading parameter   (\ref{eqn.aell}). Namely, for such $T_{(0)}$, we use
$$\kappa=2^{-m/a}$$
to denote the grading parameter near $x_0$ ($\kappa=\kappa_{ev}$ if $x_0\in\maV$ and $\kappa=\kappa_e$ if $x_0\in e\in\mathcal E$).  
Define the dilation matrix
\begin{eqnarray}\label{eqn.bv1}
\mathbf B_{v,i}:= \begin{pmatrix}
   \kappa^{-i}   &   0 & 0 \\
  0    &   \kappa^{-i} & 0\\
  0 & 0 & \kappa^{-i}
\end{pmatrix}.
\end{eqnarray}
Then, by Algorithm \ref{alg.n1}, $\mathbf B_{v,i}$ maps $L_{v, i}$ to $L_{v, 0}$ for $0\leq i< n$, and maps $L_{v, n}$ to $T_{(0)}$. We define the \textit{initial triangulation} of $L_{v, i}$, $0\leq i<n$,  to be the first decomposition of $L_{v, i}$ into tetrahedra. Thus, the initial triangulation of $L_{v, i}$ consists of those tetrahedra in $\maT_{i+1}$ that are contained in the layer $L_{v, i}$.
\end{definition}

\begin{remark}\label{rk.56}
Based on the refinement,   on $L_{v, i}$, $0\leq i\leq n$, the tetrahedra  in $\maT_n$ are isotropic with mesh size $O(\kappa^i2^{i-n})$. In $T_{(0)}$, let $\rho$ be the distance to $x_0$. Therefore, 
\be\label{eqn.dk}
\rho\sim \kappa^i \qquad {\rm{on}}\ L_{v,i}, \quad 0\leq i<n.
\ee 
Namely, if $T_{(0)}$ is a $v$-tetrahedron, $\rho\sim\rho_v$ for $v=x_0\in\maV$; and if $T_{(0)}$  is a $v_e$-tetrahedron, $\rho\sim\rho_e$, where $e\in\maE$ is the edge containing $x_0$. \end{remark}

Then, we have the interpolation error estimate in the layer $L_{v, i}$.
\begin{lemma}\label{lem.vv} Let $T_{(0)}\in\maT_0$ be either a $v$- or a $v_e$-tetrahedron. For $u\in\mathcal M^{m+1}_{\ba+\bone}(\Omega)$, where $\ba$ is given in (\ref{eqn.aell}), let $u_I$ be its nodal interpolation on $\maT_n$.  Then, for $0\leq i<n$, we have 
\begin{eqnarray*}
|u-u_I|_{H^1(L_{v,i})}\leq Ch^{m}\|u\|_{\mathcal M^{m+1}_{\ba+\bone}(L_{v,i})},
\end{eqnarray*}
where $h=2^{-n}$ and $C$ is independent of $n$ and $u$.
\end{lemma}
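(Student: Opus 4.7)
The plan is a standard affine-scaling argument that trades mesh anisotropy across layers for different weights in the target norm. The dilation $\mathbf{B}_{v,i}$ introduced in Definition 4.3 maps $L_{v,i}$ to $L_{v,0}$, and by Algorithm \ref{alg.n1} (uniform midpoint refinement away from $x_0$) the image mesh on $L_{v,0}$ is a quasi-uniform triangulation of size $\hat{h} = 2^{i-n}$. Set $\hat{u}(\hat{x}) = u(\kappa^i \hat{x})$ so that $\hat{u}$ lives on the fixed reference-like region $L_{v,0}$ and its Lagrange interpolant is $\hat{u}_I = (u_I)\circ \mathbf{B}_{v,i}^{-1}$. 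A standard $H^{m+1}\to H^1$ interpolation estimate on the shape-regular $L_{v,0}$ gives
\begin{equation*}
|\hat{u} - \hat{u}_I|_{H^1(L_{v,0})} \;\leq\; C\,\hat{h}^{m}\, |\hat{u}|_{H^{m+1}(L_{v,0})} \;=\; C\, 2^{m(i-n)}\, |\hat{u}|_{H^{m+1}(L_{v,0})}.
\end{equation*}

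Next I would push this estimate back to $L_{v,i}$ via the chain rule. Isotropic scaling by $\kappa^i$ yields
$\|\partial^{\hat\alpha}\hat{u}\|_{L^2(L_{v,0})}^2 = \kappa^{i(2|\alpha|-3)} \|\partial^\alpha u\|_{L^2(L_{v,i})}^2$, so the left side of the interpolation inequality transforms to $\kappa^{-i}|u-u_I|_{H^1(L_{v,i})}^2$ while the right side involves $\kappa^{i(2m-1)}|u|_{H^{m+1}(L_{v,i})}^2$. To bring the unweighted $H^{m+1}$ seminorm under control, use Remark \ref{rk.56}: on $L_{v,i}$ the relevant distance function is comparable to $\kappa^i$ ($\rho_v$ for a $v$-tetrahedron, $\rho_e$ for a $v_e$-tetrahedron, exploiting the geometric fact that in a $v_e$-tetrahedron the singular edge meets $T_{(0)}$ only at $x_0$ at a fixed angle). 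Hence for $|\alpha|=m+1$,
\begin{equation*}
\|\partial^\alpha u\|_{L^2(L_{v,i})}^2 \;\leq\; C\,\kappa^{-2i(|\alpha_\star|-a-1)}\, \|u\|_{\mathcal{M}^{m+1}_{\ba+\bone}(L_{v,i})}^2,
\end{equation*}
where $|\alpha_\star| = |\alpha|$ in the $v$-case and $|\alpha_\star| = |\alpha_\perp|$ in the $v_e$-case, and $a\in\{a_{ev},a_e\}$ is the parameter associated with $x_0$.

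Assembling these pieces gives
\begin{equation*}
|u-u_I|_{H^1(L_{v,i})}^2 \;\leq\; C\, 2^{2m(i-n)}\, \kappa^{\,i\bigl(2m - 2(|\alpha_\star|-a-1)\bigr)}\, \kappa^{-i}\, \kappa^{i(2m-1)}\cdot\kappa^{i}\, \|u\|_{\mathcal{M}^{m+1}_{\ba+\bone}(L_{v,i})}^2,
\end{equation*}
which I would simplify carefully: the dominant (worst) index is $|\alpha_\star|=m+1$, and collecting powers of $\kappa$ yields the clean exponent $2ia$. Substituting $\kappa = 2^{-m/a}$ from \eqref{eqn.kv1}--\eqref{eqn.kev1} gives $\kappa^{2ia} = 2^{-2im}$, and the prefactor telescopes to
\begin{equation*}
|u-u_I|_{H^1(L_{v,i})}^2 \;\leq\; C\, 2^{2m(i-n)}\cdot 2^{-2im}\, \|u\|_{\mathcal{M}^{m+1}_{\ba+\bone}(L_{v,i})}^2 \;=\; C\, h^{2m}\, \|u\|_{\mathcal{M}^{m+1}_{\ba+\bone}(L_{v,i})}^2,
\end{equation*}
which is the claimed bound; the remaining index choices $|\alpha_\star|<m+1$ produce only positive powers of $\kappa$ and are harmless.

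The main obstacle I foresee is the $v_e$-case: one must justify that the purely transverse weight $\rho_e^{|\alpha_\perp|-\mu_e}$ of the space $\mathcal{M}^{m+1}_{\ba+\bone}$ is enough to control \emph{all} components of $|u|_{H^{m+1}(L_{v,i})}$, including pure longitudinal derivatives. The key observation is that for $\alpha_\perp=0$ the weight exponent $-\mu_e=-a_e-1$ is strongly negative, but $\rho_e \sim \kappa^i$ gives a \emph{positive} power of $\kappa^i$ when the weight is removed, which only helps; only the all-transverse case $|\alpha_\perp|=m+1$ produces the potentially growing factor, and it is precisely this worst case that the choice $\kappa=2^{-m/a_e}$ is calibrated to absorb. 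Once this $|\alpha_\star|$ bookkeeping is made explicit, the proof reduces to algebra with powers of $\kappa$ and $2$.
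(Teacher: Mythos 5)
Your proposal is essentially the paper's own proof: dilation by $\mathbf B_{v,i}$ onto $L_{v,0}$, the standard interpolation estimate on the resulting quasi-uniform mesh of size $2^{i-n}$, scaling back, and then trading the unweighted $H^{m+1}$ seminorm for the weighted norm via $\rho\sim\kappa^i$ and $\kappa=2^{-m/a}$ so that $\kappa^{2ia}=2^{-2im}$ (the paper does the $v_e$-case slightly more crudely, using only the $\rho^{m-a}$ weight and boundedness of $\rho$, whereas you track $|\alpha_\perp|$ exactly). The only blemish is that your ``assembling'' display double-counts the scaling factors ($\kappa^{-i}$, $\kappa^{i(2m-1)}$ and $\kappa^{i}$ appear alongside an exponent that already contains them), but the clean exponent $2ia$ you extract and the resulting bound $Ch^{2m}\|u\|^2_{\mathcal M^{m+1}_{\ba+\bone}(L_{v,i})}$ are correct.
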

\begin{proof}
For $(x,y , z)\in L_{v,i}$, let $(\hat x, \hat y, \hat z)\in L_{v,0}$ be its image under the dilation  $\mathbf B_{v,i}$. For a function $v$ on $L_{v,i}$, we define $\hat v$ on $L_{v,0}$ by
$$
\hat v(\hat x, \hat y, \hat z):=v(x, y, z).
$$
As part of $\maT_n$, the triangulation on $L_{v,i}$ is mapped by $\mathbf B_{v,i}$ to a triangulation on $L_{v,0}$ with mesh size $O(2^{i-n})$.   
 Then, by the scaling argument and (\ref{eqn.dk}), we have
\ben
|u-u_I|_{H^1(L_{v,i})}^2&=&\kappa^i |\hat u-\hat u_I|_{H^1(L_{v,0})}^2\leq C\kappa^i2^{2m(i-n)}|\hat u|^2_{H^{m+1}(L_{v,0})}\\
&\leq& C2^{2m(i-n)}\kappa^{2mi}|u|^2_{H^{m+1}(L_{v, i})}\leq C2^{2m(i-n)}\kappa^{2ai}\sum_{|\alpha|=m+1}\|\rho^{m-a}\pa^\alpha u\|^2_{L^2(L_{v, i})}.
\een
Recall $\kappa=2^{-m/a}$ and Remark \ref{rk.56}. 
Then, by the definition of the weighted space, we have
$$
|u-u_I|^2_{H^1(L_{v,i})}\leq C2^{2m(i-n)}\kappa^{2ai}\sum_{|\alpha|=m+1}\|\rho^{m-a}\pa^\alpha u\|^2_{L^2(L_{v, i})}\leq Ch^{2m}\|u\|^2_{\mathcal M^{m+1}_{\ba+\bone}(L_{v,i})},
$$
which completes the proof.
\end{proof}

Then, we give the error estimate on the entire initial tetrahedron $T_{(0)}$. 
\begin{corollary}\label{cor.vte}
Let $T_{(0)}\in\maT_0$ be either a $v$- or a $v_e$-tetrahedron. For $u\in\mathcal M^{m+1}_{\ba+\bone}(\Omega)$, where $\ba$ is given in (\ref{eqn.aell}), let $u_I$ be its nodal interpolation on $\maT_n$. Then, we have 
\begin{eqnarray*}
|u-u_I|_{H^1(T_{(0)})}\leq Ch^{m}\|u\|_{\mathcal M^{m+1}_{\ba+\bone}(T_{(0)})},
\end{eqnarray*}
where $h=2^{-n}$ and $C$ is independent of $n$ and $u$.
\end{corollary}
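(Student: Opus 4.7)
The plan is to decompose $T_{(0)}$ into the $n+1$ mesh layers $\{L_{v,i}\}_{i=0}^n$ from Definition \ref{def.vlayer} and bound each contribution to $|u-u_I|_{H^1(T_{(0)})}^2$ separately. Lemma \ref{lem.vv} already handles every outer layer: for $0\le i<n$ it gives $|u-u_I|_{H^1(L_{v,i})}^2\le Ch^{2m}\|u\|_{\mathcal M^{m+1}_{\ba+\bone}(L_{v,i})}^2$, and summing these contributions yields at most $Ch^{2m}\|u\|_{\mathcal M^{m+1}_{\ba+\bone}(T_{(0)})}^2$. It therefore remains to establish the same estimate on the innermost layer $L_{v,n}$, which consists of a single tetrahedron of diameter $O(\kappa^n)$ containing the singular vertex $x_0$ and is thus not covered by Lemma \ref{lem.vv}.

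On $L_{v,n}$ I would apply the triangle inequality $|u-u_I|_{H^1(L_{v,n})}\le |u|_{H^1(L_{v,n})}+|u_I|_{H^1(L_{v,n})}$ and bound each term separately. For the first term, insert the weight $\rho^a$ and use $\rho\le C\kappa^n$ on $L_{v,n}$ (Remark \ref{rk.56}) together with $\kappa^{an}=2^{-mn}=h^m$ from \eqref{eqn.kv1}--\eqref{eqn.kev1}, obtaining
\begin{equation*}
|u|_{H^1(L_{v,n})}^2 \;\le\; C\kappa^{2an}\,\|\rho^{-a}\nabla u\|_{L^2(L_{v,n})}^2 \;\le\; Ch^{2m}\,\|u\|_{\mathcal M^{m+1}_{\ba+\bone}(L_{v,n})}^2.
\end{equation*}
For the interpolation term $|u_I|_{H^1(L_{v,n})}$, the key observation is that $x_0\in\partial\Omega$ and $u$ satisfies the homogeneous Dirichlet condition, so the nodal interpolant satisfies $u_I(x_0)=0$. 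An inverse inequality on the shape-regular tetrahedron $L_{v,n}$ reduces the problem to controlling $\|u_I\|_{L^\infty(L_{v,n})}$, which in turn is bounded by $\max_{j\ne x_0}|u(x_j)|$ over the remaining Lagrange nodes $x_j$; these all lie on the face $P_{v,n}\subset\bar L_{v,n-1}$, away from $x_0$. A Sobolev embedding applied on the adjacent outer layer $L_{v,n-1}$ (using $H^{m+1}_{loc}\hookrightarrow C^0$ in $\mathbb R^3$ for $m\ge 1$), combined with the same weighted scaling bookkeeping as in the proof of Lemma \ref{lem.vv}, then bounds these pointwise values by $Ch^m\|u\|_{\mathcal M^{m+1}_{\ba+\bone}(L_{v,n-1})}$.

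Summing the estimates on $L_{v,n}$ with those on the outer layers yields the claim. The main obstacle is the control of $|u_I|_{H^1(L_{v,n})}$: unlike on the outer layers, rescaling $L_{v,n}$ does not remove the singularity of $u$ and there is no sub-triangulation to exploit, so the estimate must be routed through nodal values on the non-singular face $P_{v,n}$ using $u_I(x_0)=0$. Careful tracking of the exponents, so that the scaling factors from the Sobolev embedding cancel against the weighted-norm powers of $\kappa^{n-1}$ and ultimately produce $\kappa^{an}=h^m$, is what makes the layer estimates align into the desired $O(h^m)$ bound.
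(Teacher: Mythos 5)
Your reduction to the last layer via Lemma \ref{lem.vv} and your bound for $|u|_{H^1(L_{v,n})}$ (inserting $\rho^a$, using $\rho\lesssim\kappa^n$ and $\kappa^{na}=h^m$) are fine, and your route is genuinely different from the paper's: the paper dilates $L_{v,n}$ onto $T_{(0)}$ by $\mathbf B_{v,n}$, introduces a cutoff $\chi$ vanishing near $x_0$ with $(\chi\hat u)_I=\hat u_I$, applies the standard interpolation estimate to $\chi\hat u$ (whose $H^{m+1}$-seminorm is absorbed by the weighted norm), and scales back using $\kappa^{-n}\lesssim\rho^{-1}$; no inverse estimate or Sobolev embedding is needed. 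Your "split off $u_I$ and control it through nodal values" strategy can be made to work, but as written it has two concrete defects.

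First, for $m\geq 2$ the Lagrange nodes of $L_{v,n}$ other than $x_0$ do \emph{not} all lie on $P_{v,n}$: there are nodes on the edges and faces of $L_{v,n}$ through $x_0$ and in its interior, so a Sobolev embedding on $L_{v,n-1}$ alone does not reach them. (They are still at distance $\gtrsim\kappa^n$ from $x_0$, so this is repairable by embedding on an annular portion of $L_{v,n}$ as well, where $\rho\sim\kappa^n$; for $m=1$ your statement is correct.) Second, and more seriously, the asserted intermediate bound $|u(x_j)|\leq Ch^m\|u\|_{\mathcal M^{m+1}_{\ba+\bone}(L_{v,n-1})}$ is false. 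Rescaling $L_{v,n-1}$ by $\kappa^{-(n-1)}$ and using $\rho\sim\kappa^{n-1}$ there, the embedding $H^2\hookrightarrow L^\infty$ gives only $\|u\|_{L^\infty(L_{v,n-1})}\leq C\kappa^{(n-1)(a-1/2)}\|u\|_{\mathcal M^{m+1}_{\ba+\bone}(L_{v,n-1})}$, and $\kappa^{n(a-1/2)}$ is much larger than $h^m=\kappa^{na}$; no pointwise decay of order $h^m$ follows from membership in the weighted space. The optimal rate is recovered only after multiplying by the factor $d^{1/2}\sim\kappa^{n/2}$ coming from the inverse estimate $|u_I|_{H^1(L_{v,n})}\lesssim d^{1/2}\|u_I\|_{L^\infty(L_{v,n})}$ on the shape-regular tetrahedron $L_{v,n}$, since $\kappa^{n/2}\,\kappa^{(n-1)(a-1/2)}=\kappa^{na}\,\kappa^{1/2-a}\leq C\,h^m$ with $C$ independent of $n$ (and an analogous computation with the anisotropic weights $\rho_e^{|\alpha_\perp|-a-1}$ handles the $v_e$ case, using $\rho_v\sim\rho_e$ from Remark \ref{rk.56}). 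So your final conclusion is salvageable, but the proof must be reorganized so that the embedding factor and the inverse-estimate factor are combined before comparing with $h^m$; the pointwise estimate you state cannot stand on its own.
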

\begin{proof}
By Lemma \ref{lem.vv}, it suffices to show the estimate for the last layer $L_{v,n}$. For $(x,y , z)\in L_{v,n}$, let $(\hat x, \hat y, \hat z)\in T_{(0)}$ be its image under the dilation $\mathbf B_{v,n}$. For a function $v$ on $L_{v,n}$, we define $\hat v$ on $T_{(0)}$ by
$$
\hat v(\hat x, \hat y, \hat z):=v(x, y, z).
$$
Now let $\chi$ be a smooth cutoff function on $T_{(0)}$ such that
$\chi=0$ in a neighborhood of $x_0$ and $=1$ at every other node of
$T_{(0)}$.   Recall the distance function $\rho$ from Remark \ref{rk.56}. Thus, $\rho(\hat x, \hat y, \hat z)=\kappa^{-n}\rho(x, y, z)$.   Since $\chi\hat u=0$ in the neighborhood of $x_0$,  we have
$$|\chi\hat u|_{H^{m+1}(T_{(0)})}^2\leq C\sum_{|\alpha|\leq m+1}\| \rho^{|\alpha|-1}\pa^{\alpha}\hat u\|_{L^2(T_{(0)})}^2.$$ 
Define $\hat w:=\hat u-\chi\hat u$ and note that $(\chi\hat u)_{I}=\hat u_I$.  We have
\begin{eqnarray}
    |\hat u - \hat u_{I} |_{H^1
    (T_{(0)})}& =&  |\hat w+\chi \hat u - \hat u_{I} |_{H^1 (T_{(0)})}  \leq  |\hat w|_{H^1
    (T_{(0)})}+|\chi\hat u-\hat u_{I}|_{H^1
    (T_{(0)})}\nonumber\\
  &=& |\hat w|_{H^1 (T_{(0)})}+|\chi\hat u -
  (\chi\hat u)_{I}|_{H^1 (T_{(0)})}\leq C(\|\hat u\|_{H^{1}(T_{(0)})}+|\chi\hat u|_{H^{m+1}(T_{(0)})}), \label{eqn.new1111}
\end{eqnarray}
where $C$ depends on $m$ and, through $\chi$, the nodes in the
triangulation. Then, using \eqref{eqn.new1111}, the scaling argument, $\kappa^{-n}\lesssim \rho^{-1}$ in $L_{v, n}$, the definition of the weighted space, and (\ref{eqn.aell}), we have
\begin{eqnarray*}
    |u - u_{I}|^2_{H^1 (L_{v,n})} & =& 
    \kappa^n|\hat u-\hat u_I|_{H^1(T_{(0)})}^2\leq C \kappa^n(\|\hat u\|_{H^{1}(T_{(0)})}^2+\sum_{|\alpha|\leq m+1}\| \rho^{|\alpha|-1}\pa^{\alpha}\hat u\|_{L^2(T_{(0)})}^2)\\
    & \leq& C\sum_{|\alpha|\leq m+1}\| \rho^{|\alpha|-1}\pa^{\alpha} u\|_{L^2(L_{v,n})}^2\leq C\kappa^{2na} \|u\|^2_{\mathcal M_{\ba+\bone}^{m+1}(L_{v,n})} \\&=&
    C2^{-2mn} \|u\|^2_{\mathcal M_{\ba+\bone}^{m+1}(L_{v,n})}=Ch^{2m} \|u\|^2_{\mathcal M_{\ba+\bone}^{m+1}(L_{v,n})}.
\end{eqnarray*}
Then, the desired estimate follows by summing up the estimates from different layers $L_{v, i}$, $0\leq i\leq n$.
\end{proof}

\subsection{Estimates on  initial $e$-tetrahedra in $\maT_0$}

Throughout this subsection, let  $T_{(0)}:=\triangle^4x_0 x_{1} x_{2} x_{3}\in\maT_0$ be an $e$-tetrahedron with $x_0x_1$ on the edge $e\in\maE$ and let $\kappa_e$ be the associated grading parameter. 
Then, we define the mesh layer associated with $\maT_n$ on $T_{(0)}$ as follows.
\begin{definition} \label{def.elayer}(Mesh Layers in $e$-tetrahedra) Based on  Algorithm  \ref{alg.n1}, in each  refinement, an $e$-tetrahedron is cut by a parallelogram parallel to $x_0x_1$. For example, in the $e$-tetrahedron of Figure \ref{fig.n3}, the quadrilateral with vertices $x_{02},x_{12},x_{13},x_{03}$ is the aforementioned parallelogram. We denote by $P_{e,i}$ the parallelogram produced in the $i$th refinement, $1\leq i\leq n$. Therefore, the distance from $P_{e, i+1}$ to $e$ is $\kappa_e\times$ the distance from $P_{e, i}$ to $e$. For the mesh $\maT_n$, let the $i$th layer $L_{e,i}$ on $T_{(0)}$, $0<i<n$, be the region bounded by $P_{e,i}$, $P_{e,{i+1}}$, and the faces of $T_{(0)}$.   Define $L_{e,0}$  to be the sub-region of $T_{(0)}$  away from $e$ that is separated by $P_{e, 1}$. We define $L_{e, n}$ to be the sub-region of $T_{(0)}$ between $P_{e,n}$ and $e$.   See for example Figure \ref{fig.42}.   As in Definition \ref{def.vlayer},  the initial triangulation of the layer $L_{e,i}$, $0\leq i<n$, is the first decomposition of this region into tetrahedra. Thus, the initial triangulation of $L_{e,i}$ consists of those tetrahedra in $\maT_{i+1}$ that are contained in $L_{e,i}$.
\end{definition}
\begin{remark}\label{rk.57}
 In the mesh layers, the distance  $\rho_e$ to the edge $e$ satisfies
\be\label{eqn.rhoe}
\rho_e\sim \kappa_e^i \qquad {\rm{on}}\ L_{e,i}, \quad 0\leq i<n.
\ee  In addition, the mesh layers of an $e$-tetrahedron $T_{(0)}$ also satisfy the following properties (see Figure \ref{fig.42}):
\begin{itemize}
\item The layer $L_{e,i}$, $2\leq i\leq n$, is the union of two components: sub-regions from $2^{i-1}$ $e$-tetrahedra in $\maT_{i-1}$ and sub-regions from $2^{i}-2$ $v_e$-tetrahedra in $\maT_{i-1}$.
\item Among the aforementioned $2^i-2$ $v_e$-tetrahedra in $\maT_{i-1}$, $2^{k}$ of them are sub-regions of  $v_e$-tetrahedra  in $\maT_k$, $1\leq k\leq i-1$.
\end{itemize}
\end{remark}

Now, we start to develop some estimates for the shape regularity of the mesh on $L_{e, i}$, although it is in general  anisotropic and violates the maximum angle condition. These results will be used  for the interpolation error analysis.


\begin{definition} \label{def.rdis} (Relative Distances for $e$-tetrahedra) Recall the initial $e$-tetrahedron $T_{(0)}=\triangle^4 x_0x_1x_2x_3\in\maT_0$. For an $e$-tetrahedron $T=\triangle^4\gamma_0\gamma_1\gamma_2\gamma_3$  generated by some subsequent refinements of $ T_{(0)}$ based on Algorithm \ref{alg.n1}, consider its two vertices on the edge $x_0x_1$. We call the vertex that is closer to $x_0$ the \textit{first vertex} of $T$, and call the vertex closer to $x_1$ the \textit{second vertex} of $T$. 

Without loss of generality, we suppose  $\gamma_0\gamma_1\subset e\in\maE$ and $\gamma_0$ (resp. $\gamma_1$) is the \textit{first} (resp. \textit{second}) vertex  of $T$. Let $\gamma$ be either $\gamma_2$ or $\gamma_3$. Denote by $\gamma'$  the orthogonal projection of $\gamma$ on the $z$-axis (the axis containing the edge $e$). See for instance Figure \ref{fig.441}. Then, we define $c_{\gamma,1}$ to be the \textit{first relative $z$-distance} of $\gamma$, such that
\begin{eqnarray}\label{eqn.cg1}
|c_{\gamma,1}|=|{\gamma_{0}\gamma'}|/|{\gamma_{0}\gamma_1}|, \quad {\rm{and}}\quad
\left\{\begin{array}{ll}
 c_{\gamma,1}=|c_{\gamma,1}|\ \qquad{\rm{if}}\ \overrightarrow{\gamma_{0}\gamma'}=t (\overrightarrow{\gamma_{0}\gamma_1})\ {\rm{for\ some\ }} t>0 \\
c_{\gamma,1}=-|c_{\gamma,1}| \ \qquad {\rm{otherwise}}.
\end{array}\right.
\end{eqnarray}
The \textit{second relative $z$-distance} of $\gamma$, denoted by $c_{\gamma, 2}$, is defined by
\begin{eqnarray}\label{eqn.cg2}
|c_{\gamma, 2}|=|{\gamma_{1}\gamma'}|/|{\gamma_{0}\gamma_1}|, \quad {\rm{and}}\quad
\left\{\begin{array}{ll}
 c_{\gamma, 2}=|c_{\gamma, 2}|\ \qquad{\rm{if}}\ \overrightarrow{\gamma_{1}\gamma'}=t (\overrightarrow{\gamma_{1}\gamma_0})\ {\rm{for\ some\ }} t>0 \\
c_{\gamma, 2}=-|c_{\gamma, 2}| \ \qquad {\rm{otherwise}}.
\end{array}\right.
\end{eqnarray}
It is clear that $c_{\gamma, 2}=1-c_{\gamma, 1}$. In addition, we define the \textit{absolute relative distance} for $T$, denoted by $c_T$, such that 
\be\label{eqn.adis}
c_T=\max(|c_{\gamma_2,1}|, \ |c_{\gamma_2,2}|, \ |c_{\gamma_3,1}|, \ |c_{\gamma_3,2}|).
\ee
\end{definition}
\begin{remark} For each $e$-tetrahedron, there are four relative distances corresponding to the two vertices away from the $z$-axis. The sign of the relative distance is determined by the location of orthogonal projection of the off-the-edge vertex.  The relative distances imply, for the $e$-tetrahedron, how far the off-the-edge vertices shift away in the $z$-direction from the vertices on the $z$-axis.
\end{remark}

\begin{remark} \label{rk.cp1} Note that after one refinement, $T$ is decomposed into eight sub-tetrahedra: two $e$-tetrahedra (denoted by $T_A$ and $T_B$), two $v_e$-tetrahedra, and four $o$-tetrahedra.   In this case, we call $T$ the \textit{parent tetrahedron} of the sub-tetrahedra, and call each sub-tetrahedron  the \textit{child tetrahedron} of $T$. Note that Definition \ref{def.rdis}  is also valid for $ev$-tetrahedra. We shall use it later for $ev$-tetrahedra as well.
\end{remark}

In what follows, we establish the connections between $T$ and its child $e$-tetrahedra $T_A$ and $T_B$ in terms of the corresponding relative $z$-distances.

\begin{figure}
\includegraphics[scale=0.23]{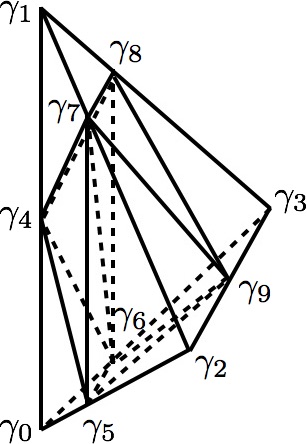} \hspace{4cm}
\includegraphics[scale=0.23]{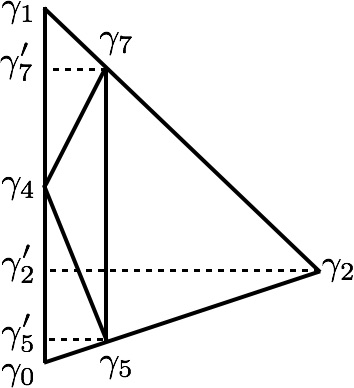} 
\caption{The mesh on an $e$-tetrahedron after one refinement (left); the induced triangles on an  face containing the singular edge (right).}\label{fig.441}
\end{figure}

\begin{lemma}\label{lem.451} Let $T\subset T_{(0)}$ be an $e$-tetrahedron in $\maT_i$, $1\leq i<n$. Let $T_A$, $T_B\subset T$ be the two child $e$-tetrahedra in $\maT_{i+1}$. Denote by $c_T$, $c_A$, and $c_B$ the absolute distances for $T$, $T_A$, and $T_B$ as in (\ref{eqn.adis}). Then, $\max(c_A, c_B)\leq \max(c_T,1)$.
\end{lemma}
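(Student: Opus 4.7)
The plan is to set up local coordinates on the singular edge, write every relative $z$-distance as a signed ratio, express the vertices of $T_A$ and $T_B$ as the explicit convex combinations prescribed by case (IV) of Algorithm~\ref{alg.n1}, and then bound the children's relative distances directly. Let $T=\triangle^4\gamma_0\gamma_1\gamma_2\gamma_3$ with $\gamma_0\gamma_1\subset e$. Place coordinates so that $e$ lies on the $z$-axis, $\gamma_0=(0,0,0)$, $\gamma_1=(0,0,L)$ with $L>0$, and write $\gamma_j=(x_j,y_j,z_j)$ for $j=2,3$. Set $c_j:=z_j/L$ (a signed scalar, positive iff the projection $\gamma_j'$ lies on the same side of $\gamma_0$ as $\gamma_1$). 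Then by Definition~\ref{def.rdis}, the two relative $z$-distances of $\gamma_j$ in $T$ are $c_{\gamma_j,1}=c_j$ and $c_{\gamma_j,2}=1-c_j$, so $c_T=\max_{j=2,3}\max(|c_j|,|1-c_j|)$.

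Next I identify the two child $e$-tetrahedra. From the list of eight sub-tetrahedra, the only ones containing a sub-segment of $e$ are $T_A=\triangle^4\gamma_0\gamma_{01}\gamma_{02}\gamma_{03}$ and $T_B=\triangle^4\gamma_1\gamma_{01}\gamma_{12}\gamma_{13}$, whose edges on $e$ are $\gamma_0\gamma_{01}$ and $\gamma_{01}\gamma_1$, each of length $L/2$. From (IV) of Algorithm~\ref{alg.n1}, $\gamma_{0j}=(1-\kappa_e)\gamma_0+\kappa_e\gamma_j$ and $\gamma_{1j}=(1-\kappa_e)\gamma_1+\kappa_e\gamma_j$ for $j=2,3$, so the $z$-coordinates are $z_{\gamma_{0j}}=\kappa_e z_j=\kappa_e c_j L$ and $z_{\gamma_{1j}}=L-\kappa_e(L-z_j)=L-\kappa_e(1-c_j)L$. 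The first vertex of $T_A$ is $\gamma_0$ and the second is $\gamma_{01}$, so dividing by $L/2$ I get
\begin{equation*}
c_{\gamma_{0j},1}(T_A)=2\kappa_e c_j, \qquad c_{\gamma_{0j},2}(T_A)=1-2\kappa_e c_j.
\end{equation*}
Symmetrically, the first vertex of $T_B$ is $\gamma_{01}$ and the second is $\gamma_1$, giving
\begin{equation*}
c_{\gamma_{1j},2}(T_B)=2\kappa_e(1-c_j), \qquad c_{\gamma_{1j},1}(T_B)=1-2\kappa_e(1-c_j).
\end{equation*}

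Since $\kappa_e\in(0,1/2]$, the factor $2\kappa_e\in(0,1]$. Thus $|2\kappa_e c_j|\leq|c_j|\leq c_T$ and $|2\kappa_e(1-c_j)|\leq|1-c_j|\leq c_T$, which handles four of the eight children's relative distances. The remaining four are of the form $|1-2\kappa_e c|$ with $c\in\{c_j,1-c_j\}$, and here I do a short case split on the sign and size of $c$: (i) if $c\in[0,1/(2\kappa_e)]$, then $1-2\kappa_e c\in[0,1]$, giving $|1-2\kappa_e c|\leq 1$; (ii) if $c>1/(2\kappa_e)\geq 1$, then $|1-2\kappa_e c|=2\kappa_e c-1\leq c\leq|c|$; (iii) if $c<0$, then $|1-2\kappa_e c|=1+2\kappa_e|c|\leq 1+|c|=|1-c|$. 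In each case $|1-2\kappa_e c|\leq\max(1,|c|,|1-c|)\leq\max(c_T,1)$, and combining with the first set of bounds yields $c_A,c_B\leq\max(c_T,1)$.

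The only delicate point, which I consider the main obstacle, is the correct bookkeeping of signs in Definition~\ref{def.rdis}: when $c_j\notin[0,1]$ the relative distances $c_{\gamma_j,1},c_{\gamma_j,2}$ carry opposite signs and neither is in $[0,1]$, and one must verify that no branch of the $2\kappa_e$-contraction inflates $|1-2\kappa_e c|$ beyond $\max(c_T,1)$. The case analysis above covers all sign configurations, and the constraint $\kappa_e\leq 1/2$ is used precisely to guarantee $2\kappa_e\leq 1$, which is what makes the contraction argument work.
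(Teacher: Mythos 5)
Your proposal is correct and follows essentially the same route as the paper's proof: the key identity is the same $2\kappa_e$-scaling of the signed relative $z$-distance from parent to child (which the paper derives via similar triangles on the face $\triangle^3\gamma_0\gamma_1\gamma_2$ rather than explicit coordinates), followed by the same sign/size case split with threshold $(2\kappa_e)^{-1}$ using $2\kappa_e\leq 1$. The only differences are cosmetic (coordinate computation of the new nodes from Algorithm \ref{alg.n1}(IV) instead of the projection argument, and bounding the four ``contracted'' distances uniformly before case-splitting on the terms $|1-2\kappa_e c|$).
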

\begin{proof} 
Denote $T$ by  $T=\triangle^4\gamma_0\gamma_1\gamma_2\gamma_3$ with the \textit{first} vertex $\gamma_0$ and the \textit{second} vertex $\gamma_1$ on the singular edge $\gamma_0\gamma_1$. As illustrated in Figure \ref{fig.441}, we  let $T_A:=\triangle^4\gamma_0\gamma_4\gamma_5\gamma_6$ and $T_B:=\triangle^4\gamma_1\gamma_4\gamma_7\gamma_8$. 
Recall the relative distance from Definition \ref{def.rdis}. In particular, let $c_{\gamma_2, 1}, c_{\gamma_2, 2}$ be the relative distances of $\gamma_2$ in $T$, and let $c^A_{\gamma_5, 1}, c^A_{\gamma_5, 2}$ (resp. $c^B_{\gamma_7, 1}, c^B_{\gamma_7, 2}$) be  the relative distances of $\gamma_5$ (resp. $\gamma_7$) in $T_A$ (resp. $T_B$). We first show $|c_{\gamma_5, 1}^A|, |c_{\gamma_5, 2}^A|\leq \max(|c_{\gamma_2, 1}|, |c_{\gamma_2, 2}|,1)$.

Consider the triangles on  the face $\triangle^3\gamma_0\gamma_1\gamma_2$ of $T$, induced by the sub-tetrahedra after one refinement of $T$ (the second picture in Figure \ref{fig.441}). In addition, we have drawn three  dashed line segments $\gamma_2\gamma_2'$, $\gamma_5\gamma_5'$, and $\gamma_7\gamma_7'$ that are perpendicular to ${\gamma_0\gamma_1}$. Then, by (\ref{eqn.cg1}), we have 
\begin{eqnarray*}
|c_{\gamma_2,1}|=|{\gamma_{0}\gamma_2'}|/|{\gamma_{0}\gamma_1}|, \quad {\rm{and}}\quad |c^A_{\gamma_5,1}|=|{\gamma_{0}\gamma_5'}|/|{\gamma_{0}\gamma_4}|.
\end{eqnarray*}
Note that $\triangle^3\gamma_0\gamma_5\gamma_5'$ is similar to $\triangle^3\gamma_0\gamma_2\gamma_2'$.  Therefore, $|{\gamma_0\gamma_5'}|=\kappa_e|{\gamma_{0}\gamma_2'}|$, and $c_{\gamma_2,1}$ and $c^A_{\gamma_5,1}$ have the same sign. Recall $0<\kappa_e\leq 1/2$. Then, we consider all the possible cases.

In the case $c_{\gamma_2,1}<0$, we have
$$
0> c^A_{\gamma_5,1}=-|{\gamma_{0}\gamma_5'}|/|{\gamma_{0}\gamma_4}|=-2\kappa_e|{\gamma_{0}\gamma_2'}|/|{\gamma_{0}\gamma_1}|=2\kappa_e c_{\gamma_2,1}\geq c_{\gamma_2,1}.
$$
Therefore, $|c^A_{\gamma_5,1}|\leq |c_{\gamma_2,1}|$.
Meanwhile, we have $$1\leq c^A_{\gamma_5,2}=1-c^A_{\gamma_5,1}=1-2\kappa_e c_{\gamma_2,1}<1-c_{\gamma_2,1}=c_{\gamma_2,2}.$$
Therefore, $|c^A_{\gamma_5,2}|\leq |c_{\gamma_2,2}|$.


In the case $0\leq c_{\gamma_2,1}<(2\kappa_e)^{-1}$, we have $c^A_{\gamma_5,1}\geq 0$ and
$$
c^A_{\gamma_5,1}=|{\gamma_{0}\gamma_5'}|/|{\gamma_{0}\gamma_4}|=2\kappa_e|{\gamma_{0}\gamma_2'}|/|{\gamma_{0}\gamma_1}|=2\kappa_e c_{\gamma_2,1}<1.
$$
Meanwhile, we have $$0\leq c^A_{\gamma_5,2}=1-c^A_{\gamma_5,1}=1-2\kappa_e c_{\gamma_2,1}< 1.$$

In the case $ c_{\gamma_2,1}\geq (2\kappa_e)^{-1}$, we have
$$
1\leq c^A_{\gamma_5,1}=|{\gamma_{0}\gamma_5'}|/|{\gamma_{0}\gamma_4}|=2\kappa_e|{\gamma_{0}\gamma_2'}|/|{\gamma_{0}\gamma_1}|=2\kappa_e c_{\gamma_2,1}\leq |c_{\gamma_2,1}|.
$$
Meanwhile, we have $$0\geq c^A_{\gamma_5,2} =1-c^A_{\gamma_5,1} =1-2\kappa_e c_{\gamma_2,1}\geq 1-c_{\gamma_2,1}=c_{\gamma_2,2}.$$
Therefore, $|c^A_{\gamma_5,2}|\leq |c_{\gamma_2,2}|$. Thus, we have shown 
$$|c^A_{\gamma_5,1}|, |c^A_{\gamma_5,2}|\leq \max(|c_{\gamma_2,1}|, |c_{\gamma_2,2}|,1).$$

With a similar calculation, we can derive the upper bounds for other relative distances in $T_A$ and $T_B$, namely, 
\ben
&|c^A_{\gamma_6,1}|, |c^A_{\gamma_6,2}|\leq \max(|c_{\gamma_3,1}|, |c_{\gamma_3,2}|,1), \\
&|c^B_{\gamma_7,1}|, |c^B_{\gamma_7,2}|\leq \max(|c_{\gamma_2,1}|, |c_{\gamma_2,2}|,1), \quad |c^B_{\gamma_8,1}|, |c^B_{\gamma_8,2}|\leq \max(|c_{\gamma_3,1}|, |c_{\gamma_3,2}|,1).\een
Hence, the proof is completed by (\ref{eqn.adis}).
\end{proof}

Recall that for a $v$- or $v_e$-tetrahedron in $\maT_0$, the isotropic transformation (\ref{eqn.bv1}) maps a mesh layer to a reference domain (either the tetrahedron itself or the layer $L_{v,0}$). Here, we define the reference domain for an $e$-tetrahedron.

\begin{definition}\label{def.rt} (The Reference $e$-tetrahedron) For the initial $e$-tetrahedron $T_{(0)}:=\triangle^4x_0x_1x_2x_3\in\maT_0$,   we use a local Cartesian coordinate system, such that the $z$-axis contains the edge $x_0x_1$ with the direction of $\overrightarrow{x_0x_1}$ as the positive direction, and $x_2$ is in the $xz$-plane. We will specify the origin later. Let $l_0:=|{x_0x_1}|$ be the length of the singular edge.  Then, we define the reference tetrahedron $\hat T=\triangle^4\hat x_0\hat x_1\hat x_2\hat x_3$, such that 
$$\hat x_0=(0, 0, -l_0/2), \quad\hat x_1=(0, 0, l_0/2), \quad\hat x_k=(\hat \lambda_k,\hat \xi_k, -l_0/2),\ k=2, 3,$$ 
where $\hat\lambda_k, \hat\xi_k$ are the $x$- and $y$-components of the vertices $x_2$ and $x_3$, respectively.  Therefore, $\hat\xi_2=0$ and $\hat \lambda_2, \hat \lambda_3, \hat\xi_3$ are constants that depend on the shape regularity of $T_{(0)}$. Thus, $\hat T$ is a  tetrahedron with one face in the plane $z=-l_0/2$, one face in the $xz$-plane, such that $|{\hat x_0\hat x_1}|= |{x_0x_1}|$, $|{\hat x_0\hat x_2}|=$ the length of the orthogonal projection of ${x_0x_2}$ in the plane $z=-l_0/2$, and  $|{\hat x_0\hat x_3}|=$ the length of the orthogonal projection of ${x_0x_3}$ in the plane $z=-l_0/2$.  In addition, we denote by $\hat\maT_1$ and $\hat\maT_2$ the triangulations of $\hat T$ after one and two edge refinements with parameter $\kappa_e$, respectively. See Figure \ref{fig.42} for example.
\end{definition}
\begin{figure}
\includegraphics[scale=0.16]{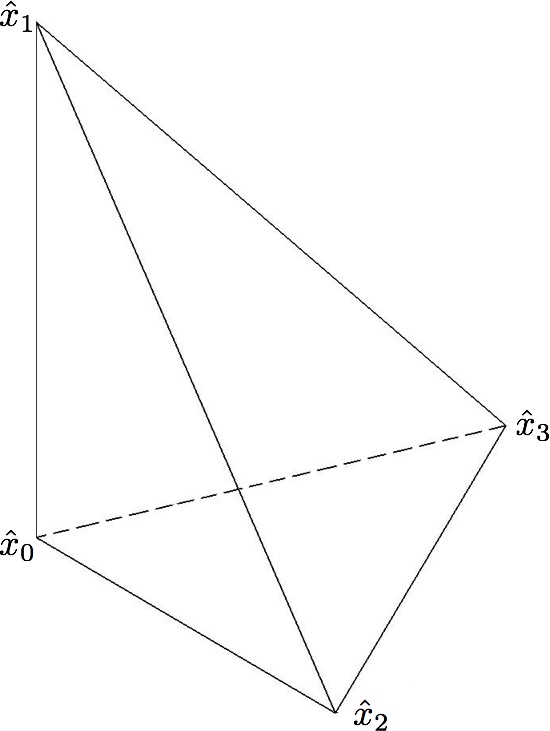} \hspace{.8cm}
\includegraphics[scale=0.16]{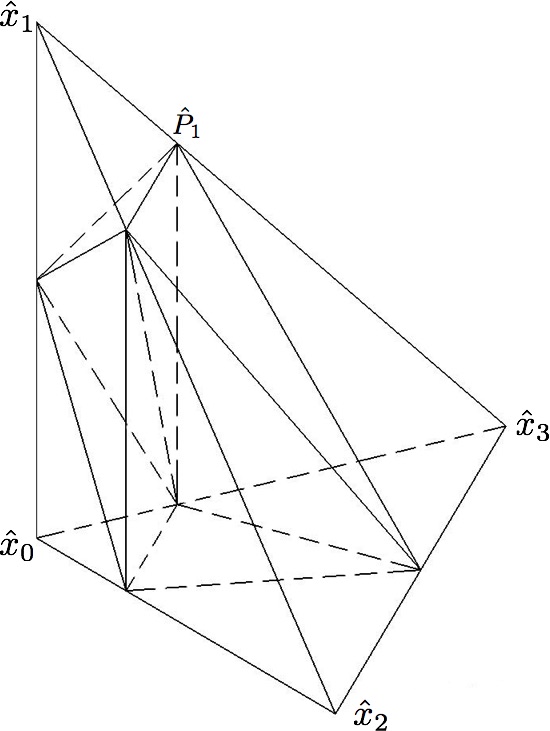}\hspace{.8cm}
\includegraphics[scale=0.16]{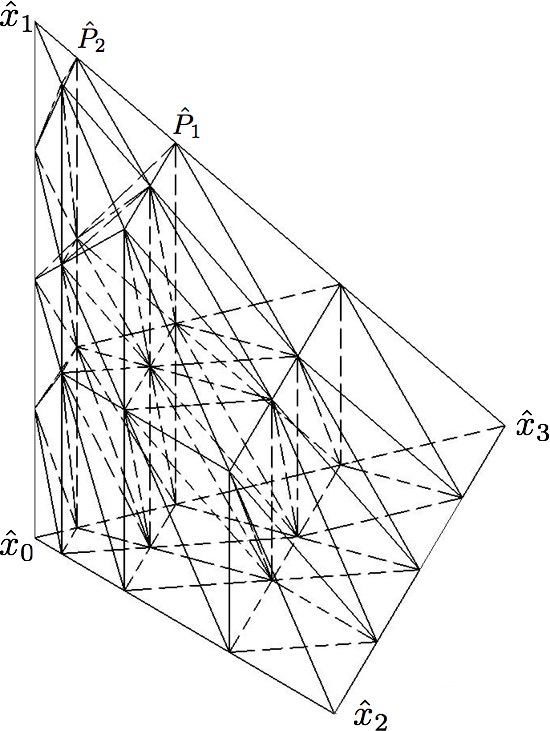}\hspace{.8cm}
\caption{\small{A reference tetrahedron $\hat T$ (left); the triangulation $\mathcal{\hat T}_1$ after one refinement (center); the triangulation $\mathcal{\hat T}_2$ after two edge refinements (right).}}\label{fig.42}
\end{figure}

In the following lemmas, we construct explicit linear mappings between an $e$-tetrahedron $\subset T_{(0)}$ and the reference tetrahedron $\hat T$.

\begin{lemma}\label{lem.46}
For an $e$-tetrahedron $\maT_i\ni T:=\triangle^4\gamma_0\gamma_1\gamma_2\gamma_3\subset T_{(0)}$, $1\leq i\leq n$, suppose $\gamma_0\gamma_1$ is the singular edge, and $\overrightarrow {\gamma_0\gamma_1}$ and  $\overrightarrow {x_0x_1}$ share the same direction. We use the local coordinate system in Definition \ref{def.rt}, and set $(\gamma_0+\gamma_1)/2$ to be the origin.  Then, there exist a matrix
\begin{eqnarray}\label{eqn.mapt}
\mathbf B_{e, i}= \begin{pmatrix}
   \kappa^{-i}_{e}   &   0 & 0 \\
  0    &   \kappa^{-i}_{e} & 0\\
  b_1 \kappa^{-i}_{e}\ & b_2 \kappa^{-i}_{e} & 2^{i}
\end{pmatrix}
\end{eqnarray}
with $|b_1|,|b_2|\leq C_0$, where $C_0> 0$ depends on the initial tetrahedron $T_{(0)}$  but not on $i$, 
such that $\mathbf B_{e, i}: T\rightarrow \hat T$ is a bijection. 
\end{lemma}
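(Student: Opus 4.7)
The plan is to construct $\mathbf B_{e,i}$ explicitly by tracking how the refinements of Algorithm \ref{alg.n1}(IV) act on the vertices of $T$ in the frame of Definition \ref{def.rt}, and then to bound the off-diagonal entries $b_1, b_2$ using Lemma \ref{lem.451}. With the origin at $(\gamma_0+\gamma_1)/2$ and writing $l := |\gamma_0\gamma_1| = l_0 2^{-i}$, the edge vertices are $\gamma_0=(0,0,-l/2)$ and $\gamma_1=(0,0,l/2)$, while $\gamma_k=(\lambda_k,\xi_k,z_k)$ for $k=2,3$. A direct computation gives $\mathbf B_{e,i}\gamma_0=(0,0,-l_0/2)=\hat x_0$ and $\mathbf B_{e,i}\gamma_1=\hat x_1$ for any choice of $b_1,b_2$, so only the conditions $\mathbf B_{e,i}\gamma_k=\hat x_k$ for $k=2,3$ need to be enforced and then shown to produce uniformly bounded $b_1, b_2$.

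I would first verify the transverse scaling $(\lambda_2,\xi_2)=\kappa_e^i(\hat\lambda_2,0)$ and $(\lambda_3,\xi_3)=\kappa_e^i(\hat\lambda_3,\hat\xi_3)$ by induction on $i$, with base case $i=0$ given by Definition \ref{def.rt}. For the inductive step, inspection of Algorithm \ref{alg.n1}(IV) shows that each of the two child $e$-tetrahedra of an $e$-tetrahedron has its off-edge vertices of the form $x_{0l}$ or $x_{1l}$ with $l\in\{2,3\}$, each being the convex combination of a parent edge vertex (whose transverse projection is zero) and a parent off-edge vertex with weights $1-\kappa_e$ and $\kappa_e$. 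Consequently the transverse projection of each off-edge child vertex is exactly $\kappa_e$ times that of the corresponding parent off-edge vertex, and the labeling convention that the vertex with $\xi$-component $0$ is $\gamma_2$ propagates from $x_2$ through its descendants. With this scaling, the top two rows of $\mathbf B_{e,i}$ automatically deliver the correct transverse components of $\hat x_2,\hat x_3$.

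What remains are the two scalar equations coming from the third row of $\mathbf B_{e,i}$:
\begin{equation*}
b_1\hat\lambda_2+2^iz_2=-l_0/2, \qquad b_1\hat\lambda_3+b_2\hat\xi_3+2^iz_3=-l_0/2.
\end{equation*}
The orthogonal projection of $\gamma_k$ on the $z$-axis has $z$-coordinate $z_k$, so by Definition \ref{def.rdis} one has $z_k+l/2=c_{\gamma_k,1}\,l$, whence $2^iz_k=l_0(c_{\gamma_k,1}-1/2)$ and the system solves to
\begin{equation*}
b_1=-\frac{l_0\,c_{\gamma_2,1}}{\hat\lambda_2}, \qquad b_2=-\frac{l_0\,c_{\gamma_3,1}+b_1\hat\lambda_3}{\hat\xi_3}.
\end{equation*}
The main obstacle is then bounding $|c_{\gamma_k,1}|$ uniformly in $i$. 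For this I would apply Lemma \ref{lem.451} recursively along the chain of $e$-tetrahedron parents from $T\in\maT_i$ back to $T_{(0)}\in\maT_0$ to conclude $c_T\leq \max(c_{T_{(0)}},1)$, where $c_{T_{(0)}}$ depends only on the shape of the initial tetrahedron. This yields $|b_1|,|b_2|\leq C_0$ with $C_0$ independent of $i$. Finally, since $\det\mathbf B_{e,i}=2^i\kappa_e^{-2i}\neq 0$, the linear map $\mathbf B_{e,i}$ is nonsingular and sends the four vertices of the non-degenerate tetrahedron $T$ to those of $\hat T$, and is therefore a bijection $T\to\hat T$.
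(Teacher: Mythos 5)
Your proposal is correct and follows essentially the same route as the paper: compose the anisotropic scaling $\mathrm{diag}(\kappa_e^{-i},\kappa_e^{-i},2^i)$ with a shear whose entries $b_1,b_2$ are fixed by the vertex conditions for $\gamma_2,\gamma_3$, and bound them uniformly via Lemma \ref{lem.451} (iterated along the chain of $e$-tetrahedron ancestors), which is exactly how the paper controls $|\zeta_2|,|\zeta_3|\lesssim|\gamma_0\gamma_1|$. Your added details — the induction establishing the transverse scaling $(\lambda_k,\xi_k)=\kappa_e^i(\hat\lambda_k,\hat\xi_k)$, the rewriting of $b_1,b_2$ in terms of the relative $z$-distances, and the determinant argument for bijectivity — merely make explicit what the paper asserts from Algorithm \ref{alg.n1} and Definition \ref{def.rt}.
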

\begin{proof}Based on the refinement  in Algorithm \ref{alg.n1}, and on Defintion \ref{def.rt},  we have $\gamma_2=(\kappa^{i}_e\hat\lambda_2, 0, \zeta_2)$,  $\gamma_3=(\kappa^{i}_e\hat\lambda_3,\kappa^{i}_e\hat\xi_3, \zeta_3$), and   $|{\gamma_0\gamma_1}|=2^{-i}l_0$=$2^{-i}|x_0x_1|$, where $\zeta_2$ and $\zeta_3$ are the $z$-coordinates of the vertices $\gamma_2$ and $\gamma_3$, respectively.  Thus, the anisotropic transformation 
\begin{eqnarray*}
\mathbf A_1:= \begin{pmatrix}
   \kappa^{-i}_{e}   &   0 & 0 \\
  0    &   \kappa^{-i}_{e} & 0\\
  0 & 0 & 2^{i}
\end{pmatrix}
\end{eqnarray*}
maps $T$ to a tetrahedron, with vertices $\mathbf A_1\gamma_0=(0, 0, -l_0/2)$,  $\mathbf A_1\gamma_1=(0, 0, l_0/2)$, $\mathbf A_1\gamma_2=(\hat\lambda_2, 0, 2^{i}\zeta_2)$, and $\mathbf A_1\gamma_3=(\hat\lambda_3, \hat\xi_3, 2^{i}\zeta_3)$. Now, define
\be\label{eqn.mapt1}b_1=-(2^{i}\zeta_2+2^{-1}l_0)/\hat\lambda_2, \qquad b_2=[2^{i}(\zeta_2\hat\lambda_3-\hat\lambda_2\zeta_3)+2^{-1}l_0(\hat\lambda_3-\hat\lambda_2)]/\hat\lambda_2\hat\xi_3,
\ee
and let 
\begin{eqnarray*}\label{eqn.mapt3}
\mathbf A_2:= \begin{pmatrix}
   1   &   0 & 0 \\
  0    &   1 & 0\\
  b_1 & b_2 & 1
\end{pmatrix}.
\end{eqnarray*}
Then,  a straightforward calculation shows that 
\begin{eqnarray*}\label{eqn.mapt4}
\mathbf B_{e, i}:=\mathbf A_2\mathbf A_1= \begin{pmatrix}
   \kappa^{-i}_{e}   &   0 & 0 \\
  0    &   \kappa^{-i}_{e} & 0\\
  b_1 \kappa^{-i}_{e}\ & b_2 \kappa^{-i}_{e} & 2^{i}
\end{pmatrix}
\end{eqnarray*}
maps $T$ to $\hat T$. Meanwhile, by Lemma \ref{lem.451},  we have $|\zeta_2|, |\zeta_3|\leq C|{\gamma_0\gamma_1}|=C2^{-i}l_0$, where $C$ depends on the shape regularity of $T_{(0)}$. In addition, since $\hat\lambda_2, \hat\lambda_3, \hat\xi_3$ all depend on the shape regularity of $T_{(0)}$ and $\hat\lambda_2,\hat\xi_3\neq 0$,  by (\ref{eqn.mapt1}), we conclude  $|b_1|,|b_2|\leq C_0$, where $C_0\geq 0$ depends on $T_{(0)}$ but not on $i$. 
 \end{proof}

Recall the parent and child tetrahedra associated to each mesh refinement in Remark \ref{rk.cp1}. Note that  for a $v_e$-tetrahedron $T_{(i)}\subset T_{(0)}$  in  $\maT_i$, its parent tetrahedron, which is in $\maT_{i-1}$, can be either a $v_e$-tetrahedron or an $e$-tetrahedron. Nevertheless, there exists a $v_e$-tetrahedron $T_{(k)}\in\maT_k$, $1\leq k\leq i$, such that $T_{(i)}\subset T_{(k)}\subset T_{(0)}$ and $T_{(k)}$'s parent tetrahedron is an $e$-tetrahedron in $\maT_{k-1}$.

Next, we construct the mapping between a $v_e$-tetrahedron in $\maT_i$ and the reference domain. Recall the triangulations $\hat\maT_1$ and $\hat\maT_2$ of $\hat T$ in Definition \ref{def.rt}.

\begin{lemma} \label{lem410} Let  $T_{(i)}\subset T_{(0)}$ be  a $v_e$-tetrahedron in $\maT_i$, $1\leq i\leq n$. Let $T_{(k)}\in \maT_k$, $1\leq k\leq i$, be the $v_e$-tetrahedron, such that $T_{(i)}\subset T_{(k)}$  and  $T_{(k)}$'s parent tetrahedron $T_{(k-1)}=\triangle^4\gamma_0\gamma_1\gamma_2\gamma_3\in\maT_{k-1}$ is an $e$-tetrahedron. On $T_{(k-1)}$, we use the same local coordinate system as in Lemma \ref{lem.46}   with origin at $(\gamma_0+\gamma_1)/2$.  Then, there is a transformation
 \begin{eqnarray}\label{eqn.bs5}
 \mathbf B_{i, k}= \begin{pmatrix}
   \kappa^{-i+1}_{e}   &   0 & 0 \\
  0    &   \kappa^{-i+1}_{e} & 0\\
  b_1 \kappa^{-i+1}_{e}\ & b_2 \kappa^{-i+1}_{e} & 2^{k-1}\kappa_e^{k-i}
\end{pmatrix}
\end{eqnarray}
that maps $T_{(i)}$  to a $v_e$-tetrahedron in $\mathcal{\hat T}_1$, where  $|b_1|,|b_2|\leq C_0$, for $C_0> 0$ depending on $T_{(0)}$ but not on $i$ or $k$.
\end{lemma}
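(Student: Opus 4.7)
The plan is to reduce Lemma \ref{lem410} to Lemma \ref{lem.46} by pre-composing that map with an explicit isotropic scaling. The guiding observation is that, once we are inside a $v_e$-tetrahedron, Case III of Algorithm \ref{alg.n1} produces a chain of $v_e$-descendants related by isotropic dilations centered at the singular vertex, exactly in the spirit of the layer structure used in Definition \ref{def.vlayer} for $v$- and $v_e$-tetrahedra.

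First I would pin down the geometry. By Case IV of Algorithm \ref{alg.n1} the refinement of the $e$-tetrahedron $T_{(k-1)} = \triangle^4\gamma_0\gamma_1\gamma_2\gamma_3$ produces exactly two $v_e$-children, and both of them have the midpoint $\gamma_{01} = (\gamma_0+\gamma_1)/2$ as their singular vertex. In the local coordinate system fixed in Lemma \ref{lem.46}, this midpoint is the origin, so the singular vertex of $T_{(k)}$ lies at the origin. From $T_{(k)}$ onward, Case III shows that among the eight children of a $v_e$-tetrahedron exactly one contains the singular vertex, and that child is the image of its parent under the isotropic dilation $\kappa_e\cdot I_3$ about the singular vertex. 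An induction on $i-k$ therefore gives $T_{(i)} = \kappa_e^{i-k} T_{(k)}$ in this coordinate system, so that the scaling $\kappa_e^{-(i-k)} I_3$ maps $T_{(i)}$ bijectively onto $T_{(k)}$.

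Next I would apply Lemma \ref{lem.46} with index $k-1$ in place of $i$: there is a matrix $\mathbf B_{e,k-1}$ of the form \eqref{eqn.mapt}, whose off-diagonal entries $b_1,b_2$ satisfy $|b_1|,|b_2|\le C_0$ with $C_0$ depending only on $T_{(0)}$, and which sends $T_{(k-1)}$ bijectively onto $\hat T$. Because $\mathbf B_{e,k-1}$ is linear and the singular vertex $\gamma_{01}$ of $T_{(k)}$ is the origin, $\mathbf B_{e,k-1}$ carries $T_{(k)}$ to a tetrahedron in $\hat T$ that has the origin as one of its vertices; by inspection of the first-level refinement rule applied to $\hat T$, this image is precisely one of the two $v_e$-tetrahedra of $\hat{\mathcal T}_1$.

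Combining the two steps, I would set $\mathbf B_{i,k} := \mathbf B_{e,k-1}\circ(\kappa_e^{-(i-k)} I_3) = \kappa_e^{-(i-k)}\,\mathbf B_{e,k-1}$. Scalar multiplication of \eqref{eqn.mapt} reproduces the matrix \eqref{eqn.bs5}: the $(1,1)$ and $(2,2)$ entries collapse to $\kappa_e^{-(k-1)-(i-k)} = \kappa_e^{-(i-1)}$, the $(3,3)$ entry becomes $2^{k-1}\kappa_e^{-(i-k)} = 2^{k-1}\kappa_e^{k-i}$, and the bounds on $b_1,b_2$ are inherited from Lemma \ref{lem.46}. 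The only point that requires genuine care is geometric rather than algebraic: one must verify carefully that the $v_e$-subchain from $T_{(k)}$ down to $T_{(i)}$ really is isotropic and centered at the singular vertex, so that no additional anisotropic correction has to be tracked through the composition. Once this is in hand, the desired form of $\mathbf B_{i,k}$ falls out immediately.
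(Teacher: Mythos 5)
Your proposal is correct and takes essentially the same route as the paper: the paper also factors $\mathbf B_{i,k}=\mathbf A_2\mathbf A_1$, where $\mathbf A_1=\kappa_e^{k-i}I_3$ is the isotropic dilation about the singular vertex (the origin) carrying $T_{(i)}$ onto $T_{(k)}$, and $\mathbf A_2=\mathbf B_{e,k-1}$ is the map of Lemma \ref{lem.46} carrying $T_{(k-1)}$ to $\hat T$ and the restriction of $\maT_k$ on $T_{(k-1)}$ to $\hat\maT_1$, so that $T_{(k)}$ lands on a $v_e$-tetrahedron of $\hat\maT_1$. Your induction on $i-k$ via Case III of Algorithm \ref{alg.n1} and the check that the image of $T_{(k)}$ is a $v_e$-element of $\hat\maT_1$ simply make explicit what the paper invokes directly.
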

\begin{proof}
Based on Algorithm \ref{alg.n1}, the origin $(\gamma_0+\gamma_1)/2$ is  the vertex of $T_{(i)}$ on the singular edge. Then, the linear mapping
 \begin{eqnarray*}
 \mathbf A_1= \begin{pmatrix}
   \kappa^{k-i}_{e}   &   0 & 0 \\
  0    &   \kappa^{k-i}_{e} & 0\\
 0 & 0 & \kappa_e^{k-i}
\end{pmatrix}
\end{eqnarray*}
translates $T_{(i)}$ to $T_{(k)}$. Since $T_{(k-1)}$ is an $e$-tetrahedron, by Lemma \ref{lem.46}, the transformation 
 \begin{eqnarray*}
 \mathbf A_2= \begin{pmatrix}
   \kappa^{-k+1}_{e}   &   0 & 0 \\
  0    &   \kappa^{-k+1}_{e} & 0\\
  b_1 \kappa^{-k+1}_{e}\ & b_2 \kappa^{-k+1}_{e} & 2^{k-1}
\end{pmatrix}
\end{eqnarray*}
maps $T_{(k-1)}$ to $\hat T$, and also maps the restriction of $\maT_k$ on $T_{(k-1)}$ to $\hat\maT_1$, where $|b_1|, |b_2|<C$ for $C$  depending on $T_{(0)}$. Therefore,
\begin{eqnarray*}
\mathbf B_{i, k}:=\mathbf A_2\mathbf A_1= \begin{pmatrix}
   \kappa^{-i+1}_{e}   &   0 & 0 \\
  0    &   \kappa^{-i+1}_{e} & 0\\
  b_1 \kappa^{-i+1}_{e}\ & b_2 \kappa^{-i+1}_{e} & 2^{k-1}\kappa_e^{k-i}
\end{pmatrix}
\end{eqnarray*}  
maps $T_{(i)}$ to one of the $v_e$-tetrahedra in $\hat\maT_1$. This completes the proof.
\end{proof}

Now, we are ready to construct the mapping from a tetrahedron $T_{(i+1)}\in\maT_{i+1}$ in the mesh layer $L_{e,i}$ (Definition \ref{def.elayer}) to the reference domain. Also recall that  $T_{(i+1)}$ is a tetrahedron in the initial triangulation of $L_{e, i}$.

\begin{lemma} \label{cor.46}Let $T_{(i+1)}\in\maT_{i+1}$ be a tetrahedron, such that  $T_{(i+1)}\subset L_{e,i}\subset T_{(0)}$, $0\leq i< n$. \\
Case I: $T_{(i+1)}$ is a child tetrahedron of an $e$-tetrahedron $T_{(i)}\in\maT_{i}$. Using the  $T_{(i)}$-based local coordinate system as in Lemma \ref{lem.46},  the transformation 
 \begin{eqnarray}\label{eqn.bs1}
 \mathbf B_{e, i}= \begin{pmatrix}
   \kappa^{-i}_{e}   &   0 & 0 \\
  0    &   \kappa^{-i}_{e} & 0\\
  b_1 \kappa^{-i}_{e}\ & b_2 \kappa^{-i}_{e} & 2^{i}
\end{pmatrix}
\end{eqnarray}
 maps $T_{(i+1)}$  to some $o$-tetrahedron in $\mathcal{\hat T}_1$. \\
Case II:  $T_{(i+1)}$ is a child tetrahedron of a $v_e$-tetrahedron $T_{(i)}\in\maT_{i}$. Let $T_{(k)}\in \maT_k$, $1\leq k\leq i$, be the $v_e$-tetrahedron, such that $T_{(i)}\subset T_{(k)}$  and  $T_{(k)}$'s parent tetrahedron $T_{(k-1)}\in\maT_{k-1}$ is an $e$-tetrahedron.  Using the  $T_{(k-1)}$-based local coordinate system as in Lemma \ref{lem410}, the transformation
 \begin{eqnarray}\label{eqn.bs2}
 \mathbf B_{i, k}= \begin{pmatrix}
   \kappa^{-i+1}_{e}   &   0 & 0 \\
  0    &   \kappa^{-i+1}_{e} & 0\\
  b_1 \kappa^{-i+1}_{e}\ & b_2 \kappa^{-i+1}_{e} & 2^{k-1}\kappa_e^{k-i}
\end{pmatrix}
 \end{eqnarray}
maps $T_{(i+1)}$  to an $o$-tetrahedron in $\mathcal{\hat T}_2$. 
 In both cases,   $|b_1|,|b_2|\leq C_0$, for $C_0> 0$ depending on $T_{(0)}$ but not on $i$ or $k$.
\end{lemma}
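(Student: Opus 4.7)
The plan is to reduce each of the two cases to a direct application of the mapping lemmas already established (Lemma \ref{lem.46} and Lemma \ref{lem410}), and then to identify the target sub-tetrahedron in $\hat{\mathcal T}_1$ or $\hat{\mathcal T}_2$ using the fact that $T_{(i+1)} \subset L_{e,i}$ sits \emph{away} from the singular edge.

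For Case I, the parent $T_{(i)} \in \mathcal T_i$ is an $e$-tetrahedron, so Lemma \ref{lem.46} provides the affine bijection $\mathbf B_{e,i}: T_{(i)} \to \hat T$ of the required form, with $|b_1|,|b_2|\le C_0$ depending only on $T_{(0)}$. Since $\mathbf B_{e,i}$ is affine, it sends the one-level refinement $\mathcal T_{i+1}\big|_{T_{(i)}}$ bijectively onto $\hat{\mathcal T}_1$, the triangulation of $\hat T$ after a single edge refinement with parameter $\kappa_e$. Because $T_{(i+1)}\subset L_{e,i}$, it lies on the side of the cutting parallelogram $P_{e,i+1}$ that is separated from the singular edge; by the classification of children in Algorithm \ref{alg.n1}, none of these sub-tetrahedra touch the edge $\gamma_0\gamma_1$ and each one is an $o$-tetrahedron (the two $e$-children and two $v_e$-children all share the edge and thus lie on the opposite side). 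Hence $\mathbf B_{e,i}(T_{(i+1)})$ is precisely one of the $o$-tetrahedra in $\hat{\mathcal T}_1$.

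For Case II, the parent $T_{(i)}\in \mathcal T_i$ is a $v_e$-tetrahedron, and by construction there is a $v_e$-ancestor $T_{(k)}\in \mathcal T_k$ ($1\le k\le i$) whose own parent $T_{(k-1)}$ is an $e$-tetrahedron. Applying Lemma \ref{lem410} yields the affine map $\mathbf B_{i,k}$ of the stated form sending $T_{(i)}$ onto one of the $v_e$-tetrahedra in $\hat{\mathcal T}_1$, again with $|b_1|,|b_2|\le C_0$. The next refinement, which subdivides $T_{(i)}$ into its eight children, is carried by $\mathbf B_{i,k}$ to the corresponding subdivision of that $v_e$-tetrahedron inside $\hat{\mathcal T}_2$. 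Since $T_{(i+1)}\subset L_{e,i}$ again lies away from the singular edge, the argument from Case I applies within $T_{(i)}$: the child is necessarily an $o$-tetrahedron (the refinement rule for a $v_e$-tetrahedron, like that for a $v$-tetrahedron, produces $v_e$-type children only in the layer containing the singular vertex, while all other children are $o$-type). Therefore $\mathbf B_{i,k}(T_{(i+1)})$ is an $o$-tetrahedron in $\hat{\mathcal T}_2$.

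The two cases together establish the lemma, and the bound on $|b_1|,|b_2|$ is inherited verbatim from Lemmas \ref{lem.46} and \ref{lem410}. The main obstacle I anticipate is bookkeeping rather than estimation: one has to be careful to verify, from Algorithm \ref{alg.n1} and Definition \ref{def.elayer}, that a child tetrahedron living inside $L_{e,i}$ really is an $o$-tetrahedron in both Cases I and II, so that it corresponds to one of the $o$-sub-tetrahedra of $\hat{\mathcal T}_1$ or $\hat{\mathcal T}_2$; once this combinatorial fact is checked, the map itself is simply the composition of the known affine maps, and its matrix form follows from direct multiplication.
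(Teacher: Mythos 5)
Your proposal is correct and follows essentially the same route as the paper: invoke the mappings of Lemmas \ref{lem.46} and \ref{lem410}, observe that the linear map carries the one-level (graded) subdivision of $T_{(i)}$ onto the corresponding subdivision of its image (the paper phrases this via the images of the cutting parallelograms $P_{e,i+1}\cap T_{(i)}\mapsto \hat P_1$, resp.\ $P_{e,i}\mapsto \hat P_1$, $P_{e,i+1}\mapsto \hat P_2$), and conclude that a child lying in $L_{e,i}$ must be one of the $o$-tetrahedra of $\hat\maT_1$ or $\hat\maT_2$, with the bounds on $b_1,b_2$ inherited from the earlier lemmas. The only slight imprecision is the parenthetical claim that the two $v_e$-children of an $e$-tetrahedron ``share the edge'' --- they meet it only at the midpoint $x_{01}$ --- but your substantive point, that they lie on the edge side of $P_{e,i+1}$ and hence outside $L_{e,i}$, is exactly right.
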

  \begin{proof}
If $T_{(i+1)}$ is a child tetrahedron of an $e$-tetrahedron $T_{(i)}\in\maT_{i}$, the matrix in (\ref{eqn.mapt}) maps $T_{(i)}$ to $\hat T$, and maps $P_{e,i+1}\cap T_{(i)}$ to $\hat P_1$, where $P_{e, i+1}$ is the parallelogram cutting $T_{(0)}$ in the $i+1$st refinement (Definition \ref{def.elayer}) and $\hat P_1$ is the parallelogram cutting $\hat T$ in the first edge refinement (see Figure \ref{fig.42}). Consequently,  $T_{(i+1)}$ is translated to one of the four $o$-tetrahedra  in $\mathcal{\hat T}_1$ by the same mapping. 

For Case II, the transformation (\ref{eqn.bs5}) maps $T_{(i)}$ to a $v_e$-tetrahedron in $\hat\maT_1$. In addition, it maps $P_{e, i}\cap \bar T_{(i)}$ to $\hat P_1$, and $P_{e, i+1}\cap T_{(i)}$ to $\hat P_2$ (see Figure \ref{fig.42}). Therefore, the same transformation maps $T_{(i+1)}$ to an $o$-tetrahedron in $\hat\maT_2$ between $\hat P_1$ and $\hat P_2$. This completes the proof.
\end{proof}

Each tetrahedron  in $\maT_{i+1}$ that belongs to layer $L_{e,i}$ falls into either Case I or Case II of Lemma \ref{cor.46}. Thus, there is a linear transformation $\mathbf B$ (either $\mathbf B_{e, i}$ or $\mathbf B_{i,k}$) that maps $T_{(i+1)}$ to an $o$-tetrahedron in either $\hat\maT_1$ or in $\hat\maT_2$. We denote this $o$-tetrahedron by $\hat T_{(i+1)}$. It is clear that $\hat T_{(i+1)}$ belongs to a finite number of similarity classes determined by the $o$-tetrahedra in $\hat\maT_1$ and  $\hat\maT_2$.  Then, for $(x,y ,z)\in T_{(i+1)}$, we have
\begin{equation}\label{eqn.dilation}
\mathbf B(x,y ,z)=(\hat x,\hat y, \hat z)\in \hat T_{(i+1)}.
\end{equation}
For a function $v$ on $T_{(i+1)}$, we define  $\hat v(\hat x,\hat y, \hat z):=v(x,y ,z)$.

In the $i+1$st refinement, $0\leq i<n$, when the layer $L_{e,i}$ is formed, it only contains tetrahedra  in $\maT_{i+1}$. To obtain the mesh $\maT_n$, these  tetrahedra  in $L_{e,i}$ are further refined uniformly $n-i-1$ times. In the following, we obtain a uniform interpolation error estimate for the mesh $\maT_n$ in the layer $L_{e,i}$.
\begin{theorem} \label{thm.11}
Let $T_{(0)}\in\maT_{0}$ be an $e$-tetrahedron. For $u\in\mathcal M^{m+1}_{\ba+\bone}(\Omega)$, where $\ba$ is given in (\ref{eqn.aell}), let $u_I$ be its nodal interpolation on $\maT_n$. Then, for $0\leq i<n$, we have 
$$
|u-u_I|_{H^1(L_{e,i})}\leq Ch^{m}\|u\|_{\mathcal M^{m+1}_{\ba+\bone}(L_{e,i})},
$$
where $L_{e,i}$ is the mesh layer in Definition \ref{def.elayer}, $h=2^{-n}$, and $C$ depends on $T_{(0)}$ and $m$, but not on $i$.
\end{theorem}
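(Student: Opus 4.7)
The plan is to reduce the estimate on the layer $L_{e,i}$ to a uniform interpolation estimate on a fixed reference $o$-tetrahedron, and then absorb the anisotropic scaling into the edge weight built into $\mathcal M^{m+1}_{\ba+\bone}$. First, I would decompose $L_{e,i}$ into the tetrahedra $T_{(i+1)}\in\maT_{i+1}$ of its initial triangulation. By Remark \ref{rk.57}, each such $T_{(i+1)}$ falls into one of two types: it is the child of an $e$-tetrahedron in $\maT_i$ (Case I of Lemma \ref{cor.46}) or the child of a $v_e$-tetrahedron in $\maT_i$ that descends from some $\maT_k$ $v_e$-tetrahedron whose parent is an $e$-tetrahedron in $\maT_{k-1}$ (Case II). In either case, Lemma \ref{cor.46} provides an explicit linear map $\mathbf B$ (namely $\mathbf B_{e,i}$ or $\mathbf B_{i,k}$) that sends $T_{(i+1)}$ to a shape-regular $o$-tetrahedron $\hat T_{(i+1)}$ lying in one of the finitely many similarity classes of $\hat \maT_1\cup\hat\maT_2$; the shear entries $b_1,b_2$ are uniformly bounded by $C_0$ independent of $i$ and $k$.

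Second, since $\mathbf B$ is affine, it commutes with nodal interpolation, so $\hat u_I=(\hat u)_I$, and the $n-i-1$ further uniform refinements of $T_{(i+1)}$ needed to realize $\maT_n$ pull back to a quasi-uniform triangulation of $\hat T_{(i+1)}$ with mesh size $\sim 2^{-(n-i-1)}$. The standard shape-regular Bramble--Hilbert estimate on $\hat T_{(i+1)}$ yields
\begin{equation*}
|\hat u - \hat u_I|_{H^1(\hat T_{(i+1)})}\leq C\,2^{-(n-i-1)m}\,|\hat u|_{H^{m+1}(\hat T_{(i+1)})}.
\end{equation*}
I would then use the explicit form of $\mathbf B^{-1}$ together with $\kappa_e\leq 1/2$ (so that $2^i\leq\kappa_e^{-i}$) to verify that $|\nabla u|^2\lesssim \kappa_e^{-2i}|\nabla\hat u|^2$, and combine this with $|\det \mathbf B^{-1}|\sim\kappa_e^{2i}2^{-i}$ to get $|u-u_I|^2_{H^1(T_{(i+1)})}\lesssim 2^{-i}|\hat u-\hat u_I|^2_{H^1(\hat T_{(i+1)})}$. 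Conversely, the chain rule applied to $\hat u(\hat x)=u(\mathbf B^{-1}\hat x)$ expresses each $\hat\partial^\beta\hat u$ as a bounded linear combination (with coefficients depending only on $b_1,b_2,\beta$) of terms of the form $\kappa_e^{i|\alpha_\perp|}\,2^{-i(|\beta|-|\alpha_\perp|)}\,\partial^\alpha u$ with $|\alpha|=|\beta|=m+1$.

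Third, I would invoke Remark \ref{rk.57} to substitute $\rho_e\sim\kappa_e^i$ on $L_{e,i}$, so that
\begin{equation*}
\|\partial^\alpha u\|_{L^2(T_{(i+1)})}^2\leq C\,\kappa_e^{-2i(|\alpha_\perp|-a_e-1)}\,\|\rho_e^{|\alpha_\perp|-a_e-1}\partial^\alpha u\|_{L^2(T_{(i+1)})}^2.
\end{equation*}
Combining the Jacobian, the chain-rule bound, and this weight-absorption gives an overall scaling factor of the form $2^{-2m(n-i-1)}\cdot 2^{-i}\cdot\kappa_e^{2ia_e}\cdot 2^{2i(|\alpha_\perp|-m-1)}\cdot\kappa_e^{-2i}\cdot 2^i$ on each $\|\rho_e^{|\alpha_\perp|-a_e-1}\partial^\alpha u\|^2$ term; using $\kappa_e^{2ia_e}=2^{-2im}$ (since $\kappa_e=2^{-m/a_e}$) and $|\alpha_\perp|\leq m+1$ simplifies this uniformly to $C\,h^{2m}$. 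Summing over all $T_{(i+1)}$ in the initial triangulation of $L_{e,i}$ gives the claimed bound, and the constant is independent of $i$ because the $b_1,b_2$ in $\mathbf B$ are uniformly bounded and the number of similarity classes of $\hat T_{(i+1)}$ is finite. The main obstacle is the last bookkeeping step: the map $\mathbf B$ is anisotropic and non-diagonal, so many cross-terms appear in the chain rule and one must verify that every one of them — in particular every intermediate value of $|\alpha_\perp|$ and the two distinct cases of Lemma \ref{cor.46} — produces the same $h^{2m}$ scaling rather than a super-optimal exponent, which is where the constraint $\kappa_e\leq 1/2$ and the anisotropic structure of $\mathcal M^{m+1}_{\ba+\bone}$ do all the work.
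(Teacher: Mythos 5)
Your strategy is the same as the paper's: decompose $L_{e,i}$ into the tetrahedra of its initial triangulation, use Lemma \ref{cor.46} to map each one to a shape-regular $o$-tetrahedron in $\hat\maT_1$ or $\hat\maT_2$, apply the standard interpolation estimate on the image (the $n-i-1$ further refinements are uniform and commute with the affine map), pull back with the chain rule, and absorb the powers of $\kappa_e$ into the edge weight via $\rho_e\sim\kappa_e^i$ and $\kappa_e^{a_e}=2^{-m}$. The intermediate facts you state for Case I are all correct and match (\ref{eqn.d111}): the factor $2^{-i}$ for the whole $H^1$ seminorm (your crude bound, using $2^i\le\kappa_e^{-i}$, is a harmless simplification of the paper's separate treatment of $\partial_z$), the chain-rule coefficients $\kappa_e^{i|\alpha_\perp|}2^{-i\alpha_3}$, the Jacobian $\kappa_e^{-2i}2^{i}$, and the weight absorption $\kappa_e^{-2i(|\alpha_\perp|-a_e-1)}$.

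Two points need repair. First, your displayed combined scaling factor carries an extra $\kappa_e^{-2i}$: merging the chain-rule factor $\kappa_e^{2i|\alpha_\perp|}$ with the weight absorption $\kappa_e^{-2i(|\alpha_\perp|-a_e-1)}$ gives $\kappa_e^{2i(a_e+1)}$, not $\kappa_e^{2ia_e}$, so after cancelling against the determinant's $\kappa_e^{-2i}$ the net factor is $2^{-2m(n-i-1)}\,2^{-2i\alpha_3}\,\kappa_e^{2ia_e}\le C h^{2m}$; with the product exactly as you wrote it, the $\alpha_3=0$ terms scale like $h^{2m}\kappa_e^{-2i}$, which is not uniformly bounded in $i$, so the final "simplifies uniformly to $Ch^{2m}$" claim fails for that product. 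Since reassembling your own (correct) factors gives the paper's bound, this is an arithmetic slip rather than a missing idea, but it must be fixed. Second, the scalings you actually write down are those of $\mathbf B_{e,i}$ only; in Case II the map $\mathbf B_{i,k}$ has transverse scale $\kappa_e^{-(i-1)}$ and longitudinal scale $2^{k-1}\kappa_e^{k-i}$, so the Jacobian and chain-rule coefficients are different (compare (\ref{eqn.etr2})), and one needs $(2\kappa_e)^{k-1}\le 1$ (from $\kappa_e\le 1/2$) to see that the transverse scale still dominates before the same weight absorption closes the estimate; the paper carries this case out explicitly, whereas your proposal only asserts it, so that bookkeeping still has to be done to complete the proof.
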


\begin{proof} Based on Algorithm \ref{alg.n1}, 
the layer $L_{e,i}$ is formed in the $i+1$st refinement and is the union of tetrahedra in $\maT_{i+1}$ between $P_{e,i}$ and $P_{e,i+1}$. Therefore, it suffices to verify the following interpolation error estimate on each tetrahedron $\maT_{i+1}\ni T_{(i+1)}\subset L_{e,i}$,
\be\label{eqn.need1}
|u-u_I|_{H^1(T_{(i+1)})}\leq Ch^{m}\|u\|_{\mathcal M^{m+1}_{\ba+\bone}(T_{(i+1)})}.
\ee
We show this estimate based on the type of $T_{(i+1)}$'s parent tetrahedron.

Case I: $T_{(i+1)}$'s parent  is an $e$-tetrahedron in $\maT_i$.  Let  $(x,y,z)\in T_{(i+1)}$ and $(\hat x,\hat y, \hat z)\in\hat T_{(i+1)}$ as in (\ref{eqn.dilation}). Then, by the mapping in (\ref{eqn.bs1}) and  direct calculation, we have
\be\label{eqn.d111}
\left\{\begin{array}{ll}
dxdydz=2^{-i}\kappa_e^{2i}d\hat xd\hat yd\hat z; \\
\pa_{ x} v=(\kappa_e^{-i}\pa_{\hat x}+b_1\kappa_e^{-i}\pa_{\hat z})\hat v,\quad  \pa_{y} v=(\kappa_e^{-i}\pa_{\hat y}+b_2\kappa_e^{-i}\pa_{\hat z})\hat v, \quad \pa_{z} v=2^i\pa_{\hat z}\hat v; \\
\pa_{\hat x} \hat v=(\kappa_e^{i}\pa_{x}-b_12^{-i}\pa_{ z}) v,\quad \pa_{\hat y} \hat v=(\kappa_e^{i}\pa_{y}-b_22^{-i}\pa_{ z}) v, \quad \pa_{\hat z} \hat v=2^{-i}\pa_{ z} v.
\end{array}\right.
\ee
Therefore, by Lemma \ref{cor.46}, (\ref{eqn.d111}),  the standard interpolation estimate on $\hat T_{(i+1)}$, (\ref{eqn.rhoe}), and (\ref{eqn.ke1}), we have
 \begin{eqnarray*}
\|\partial_x(u-u_I)\|_{L^2(T_{(i+1)})}^2&\leq& C2^{-i}\big(\|\partial_{\hat x}(\hat u-\hat u_I)\|_{L^2(\hat T_{(i+1)})}^2+\|\partial_{\hat z}(\hat u-\hat u_I)\|^2_{L^2(\hat T_{(i+1)})}\big)\\
&\leq& C2^{-i}2^{2m(i-n)}|\hat u|^2_{H^{m+1}(\hat T_{(i+1)})}  \\
&\leq& C2^{2m(i-n)}\sum_{|\alpha_\perp|+\alpha_3= m+1}2^{-2i\alpha_3}\kappa_e^{2i(|\alpha_\perp|-1)}\|\partial^{\alpha_\perp}\partial_z^{\alpha_3}u\|^2_{L^2(T_{(i+1)})}\\
&\leq& C2^{2m(i-n)}\sum_{|\alpha_\perp|+\alpha_3= m+1}2^{-2i\alpha_3}\|\rho_{e}^{|\alpha_\perp |-1}\partial^{\alpha_\perp}\partial_z^{\alpha_3}u\|^2_{L^2(T_{(i+1)})}\\
&\leq &C2^{2m(i-n)}\kappa_{e}^{2ia_{e}}\|u\|_{\mathcal M^{m+1}_{\ba+\bone}(T_{(i+1)})}^{2} \leq Ch^{2m}\|u\|_{\mathcal M^{m+1}_{\ba+\bone}(T_{(i+1)})}^2.\nonumber
\end{eqnarray*}
A similar calculation for the derivative with respect to $y$ gives
\ben
\|\partial_y(u-u_I)\|_{L^2(T_{(i+1)})}\leq Ch^{m}\|u\|_{\mathcal M^{m+1}_{\ba+\bone}(T_{(i+1)})}.
\een
In the $z$-direction, by Lemma \ref{cor.46}, (\ref{eqn.d111}), the standard interpolation estimate, (\ref{eqn.rhoe}), and (\ref{eqn.ke1}),  we have
\ben
\|\partial_z(u-u_I)\|_{L^2(T_{(i+1)})}^2&\leq& C2^{i}\kappa_e^{2i}\|\partial_{\hat z}(\hat u-\hat u_I)\|^2_{L^2(\hat T_{(i+1)})}\\
&\leq& C2^{i}\kappa_e^{2i}2^{2m(i-n)}|\hat u|^2_{H^{m+1}(\hat T_{(i+1)})}  \\
&\leq& C2^{2m(i-n)}\sum_{|\alpha_\perp|+\alpha_3= m+1}2^{2i}2^{-2i\alpha_3}\kappa_e^{2i|\alpha_\perp|}\|\partial^{\alpha_\perp}\partial_z^{\alpha_3}u\|^2_{L^2(T_{(i+1)})}\\
&\leq& C2^{2m(i-n)}\sum_{|\alpha_\perp|+\alpha_3= m+1}2^{-2i\alpha_3}\|\rho_{e}^{|\alpha_\perp |-1}\partial^{\alpha_\perp}\partial_z^{\alpha_3}u\|^2_{L^2(T_{(i+1)})}\\
&\leq &C2^{2m(i-n)}\kappa_{e}^{2ia_{e}}\|u\|_{\mathcal M^{m+1}_{\ba+\bone}(T_{(i+1)})}^{2} \leq Ch^{2m}\|u\|_{\mathcal M^{m+1}_{\ba+\bone}(T_{(i+1)})}^2.\nonumber
\een
Hence, we have completed the proof for (\ref{eqn.need1}).

Case II: $T_{(i+1)}$'s parent is  a $v_e$-tetrahedron $T_{(i)}\in\maT_{i}$. Let $T_{(k)}\in \maT_k$, $1\leq k\leq i$, be the $v_e$-tetrahedron, such that $T_{(i)}\subset T_{(k)}$  and  $T_{(k)}$'s parent tetrahedron $T_{(k-1)}\in\maT_{k-1}$ is an $e$-tetrahedron. Then, using the mapping (\ref{eqn.bs2}), by (\ref{eqn.dilation}), for $(x,y,z)\in T_{(i+1)}$ and $(\hat x,\hat y, \hat z)\in\hat T_{(i+1)}$,  we have
\be\label{eqn.etr2}
\hspace{0.5cm}\left\{\begin{array}{ll}
dxdydz=2^{1-k}\kappa_e^{3i-k-2}d\hat xd\hat yd\hat z;\\
\pa_{ x} v=(\kappa_e^{1-i}\pa_{\hat x}+b_1\kappa_e^{1-i}\pa_{\hat z})\hat v,\quad  \pa_{y} v=(\kappa_e^{1-i}\pa_{\hat y}+b_2\kappa_e^{1-i}\pa_{\hat z})\hat v, \quad \pa_{z} v=2^{k-1}\kappa_e^{k-i}\pa_{\hat z}\hat v; \\
\pa_{\hat x} \hat v=(\kappa_e^{i-1}\pa_{x}-b_12^{1-k}\kappa_e^{i-k}\pa_{ z}) v,\quad \pa_{\hat y} \hat v=(\kappa_e^{i-1}\pa_{y}-b_22^{1-k}\kappa_e^{i-k}\pa_{ z}) v, \quad \pa_{\hat z} \hat v=2^{1-k}\kappa_e^{i-k}\pa_{ z} v.
\end{array}\right.
\ee
Therefore, by Lemma \ref{cor.46}, (\ref{eqn.etr2}),   the standard interpolation estimate, (\ref{eqn.rhoe}), and (\ref{eqn.ke1}),  we have
 \begin{eqnarray*}
\|\partial_x(u-u_I)\|_{L^2(T_{(i+1)})}^2&\leq& C2^{1-k}\kappa_e^{i-k}\big(\|\partial_{\hat x}(\hat u-\hat u_I)\|_{L^2(\hat T_{(i+1)})}^2+\|\partial_{\hat z}(\hat u-\hat u_I)\|^2_{L^2(\hat T_{(i+1)})}\big)\\
&\leq& C2^{1-k}\kappa_e^{i-k}2^{2m(i-n)}|\hat u|^2_{H^{m+1}(\hat T_{(i+1)})}  \\
&\leq& C2^{2m(i-n)}\sum_{|\alpha_\perp|+\alpha_3= m+1}2^{2(1-k)\alpha_3}\kappa_e^{2(i-k)\alpha_3}\kappa_e^{(2i-2)(|\alpha_\perp|-1)}\|\partial^{\alpha_\perp}\partial_z^{\alpha_3}u\|^2_{L^2(T_{(i+1)})}\\
&\leq& C2^{2m(i-n)}\sum_{|\alpha_\perp|+\alpha_3= m+1}2^{2(1-k)\alpha_3}\|\rho_{e}^{|\alpha_\perp |-1}\partial^{\alpha_\perp}\partial_z^{\alpha_3}u\|^2_{L^2(T_{(i+1)})}\\
&\leq &C2^{2m(i-n)}\kappa_{e}^{2ia_{e}}\|u\|_{\mathcal M^{m+1}_{\ba+\bone}(T_{(i+1)})}^{2} \leq Ch^{2m}\|u\|_{\mathcal M^{m+1}_{\ba+\bone}(T_{(i+1)})}^2.\nonumber
\end{eqnarray*}
A similar calculation for the derivative with respect to $y$ gives
\ben
\|\partial_y(u-u_I)\|_{L^2(T_{(i+1)})}\leq Ch^{m}\|u\|_{\mathcal M^{m+1}_{\ba+\bone}(T_{(i+1)})}.
\een
In the $z$-direction,  by Lemma \ref{cor.46}, (\ref{eqn.etr2}),  the standard interpolation estimate, (\ref{eqn.rhoe}), and (\ref{eqn.ke1}), we have
\ben
\|\partial_z(u-u_I)\|_{L^2(T_{(i+1)})}^2&\leq& C(2^{1-k}\kappa_e^{i-k})\kappa_e^{2(i-1)}(2^{k-1}\kappa_e^{k-i})^2\|\partial_{\hat z}(\hat u-\hat u_I)\|^2_{L^2(\hat T_{(i+1)})}\\
&\leq& C(2^{1-k}\kappa_e^{i-k})\kappa_e^{2(i-1)}(2^{k-1}\kappa_e^{k-i})^22^{2m(i-n)}|\hat u|^2_{H^{m+1}(\hat T_{(i+1)})}  \\
&\leq& C2^{2m(i-n)}\sum_{|\alpha_\perp|+\alpha_3= m+1}(2^{1-k}\kappa_e^{i-k})^{2(\alpha_3-1)}\kappa_e^{2|\alpha_\perp|(i-1)}\|\partial^{\alpha_\perp}\partial_z^{\alpha_3}u\|^2_{L^2(T_{(i+1)})}\\
&\leq& C2^{2m(i-n)}\sum_{|\alpha_\perp|+\alpha_3= m+1}(2^{1-k}\kappa_e^{i-k})^{2(\alpha_3-1)}\kappa_e^{2i-2|\alpha_\perp|}\|\rho_{e}^{|\alpha_\perp |-1}\partial^{\alpha_\perp}\partial_z^{\alpha_3}u\|^2_{L^2(T_{(i+1)})}\\
&\leq &C2^{2m(i-n)}\kappa_{e}^{2ia_{e}}\|u\|_{\mathcal M^{m+1}_{\ba+\bone}(T_{(i+1)})}^{2} \leq Ch^{2m}\|u\|_{\mathcal M^{m+1}_{\ba+\bone}(T_{(i+1)})}^2.\nonumber
\een
This completes the proof for (\ref{eqn.need1}) of Case II. 

Hence, the theorem is proved by summing up the estimates for all the tetrahedra $T_{(i+1)}$ in $L_{e, i}$.
\end{proof}

Then, we extend the interpolation error estimate to the entire initial tetrahedron $T_{(0)}\in\maT_0$.
\begin{corollary}\label{cor.ete}
Let $T_{(0)}\in\maT_{0}$ be an $e$-tetrahedron. For $u\in\mathcal M^{m+1}_{\ba+\bone}(\Omega)$, where $\ba$ is given in (\ref{eqn.aell}), let $u_I$ be its nodal interpolation on $\maT_n$. Then, we have
$$
|u-u_I|_{H^1(T_{(0)})}\leq Ch^{m}\|u\|_{\mathcal M^{m+1}_{\ba+\bone}(T_{(0)})},
$$
where $h=2^{-n}$ and $C$ depends on $T_{(0)}$ and $m$.
\end{corollary}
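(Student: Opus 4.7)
The plan is to decompose $T_{(0)}=\bigcup_{i=0}^{n} L_{e,i}$ and to handle the outer layers via Theorem \ref{thm.11} and the innermost layer $L_{e,n}$ by a cutoff-and-scale argument modeled on the proof of Corollary \ref{cor.vte}. First, summing the layerwise estimates of Theorem \ref{thm.11} over $0\leq i<n$ and absorbing the sum into the global weighted norm gives
$$\sum_{i=0}^{n-1}|u-u_I|_{H^1(L_{e,i})}^2 \;\leq\; Ch^{2m}\sum_{i=0}^{n-1}\|u\|_{\mathcal M^{m+1}_{\ba+\bone}(L_{e,i})}^2 \;\leq\; Ch^{2m}\|u\|_{\mathcal M^{m+1}_{\ba+\bone}(T_{(0)})}^2.$$
It therefore suffices to prove the same bound on the innermost layer $L_{e,n}$, which is a thin tubular region adjacent to the singular edge $e$.

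\textbf{Treatment of $L_{e,n}$.} I would introduce an anisotropic dilation that stretches the two perpendicular directions by $\kappa_e^{-n}$ (and the edge-parallel direction by $2^n$ to produce a bounded aspect ratio), mapping $L_{e,n}$ onto a reference region $\hat L$ whose induced triangulation lies in only finitely many similarity classes and whose nodes keep a fixed separation from the image of $e$. As in the proof of Corollary \ref{cor.vte}, choose a smooth cutoff $\chi$ on $\hat L$ that vanishes in a neighborhood of the image of $e$ and equals $1$ at every other interpolation node. Because $u=0$ on $e\subset\partial\Omega$ and hence $\hat u_I=0$ at all nodes on the image edge, one has $(\chi\hat u)_I=\hat u_I$, and the splitting $\hat u = (\hat u-\chi\hat u)+\chi\hat u$ yields
$$|\hat u-\hat u_I|_{H^1(\hat L)} \;\leq\; C\bigl(\|\hat u\|_{H^1(\hat L)}+|\chi\hat u|_{H^{m+1}(\hat L)}\bigr),$$
with
$$|\chi\hat u|_{H^{m+1}(\hat L)}^2 \;\leq\; C\sum_{|\alpha|\leq m+1}\|\hat\rho_e^{|\alpha_\perp|-1}\pa^\alpha\hat u\|_{L^2(\hat L)}^2,$$
where $\hat\rho_e$ denotes the distance to the image edge. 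Scaling back via the chain rule exactly as in Theorem \ref{thm.11}, using $\rho_e\sim\kappa_e^n$ on $L_{e,n}$ and the identity $\kappa_e^{2na_e}=2^{-2mn}=h^{2m}$, produces
$$|u-u_I|_{H^1(L_{e,n})}^2 \;\leq\; Ch^{2m}\|u\|_{\mathcal M^{m+1}_{\ba+\bone}(L_{e,n})}^2,$$
which, combined with the sum over the outer layers, yields the claim.

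\textbf{Main obstacle.} The principal technical difficulty is that the mesh on $L_{e,n}$ is anisotropic and violates the maximum-angle condition, while the weight $\rho_e^{|\alpha_\perp|-\mu_e}$ in $\mathcal M^{m+1}_{\bmu}$ assigns a power only to the \emph{transverse} multi-index $|\alpha_\perp|$. Consequently, the chain-rule contributions arising from the lower-triangular anisotropic affine map introduce mixed derivative terms whose scaling factors involve both $\kappa_e^{-n}$ and $2^n$, and each such factor must be matched against the appropriate power of $\hat\rho_e$ to stay within the weighted norm. The bookkeeping mirrors, but is more delicate than, the transverse/longitudinal accounting already carried out in Theorem \ref{thm.11}, and verifying that the cutoff-plus-scaling produces only terms controlled by $\mathcal M^{m+1}_{\ba+\bone}(L_{e,n})$ is the crux of the argument.
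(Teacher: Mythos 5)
Your overall skeleton is the same as the paper's: outer layers $L_{e,i}$, $0\leq i<n$, are disposed of by Theorem \ref{thm.11}, and the whole difficulty is pushed into the last layer $L_{e,n}$, which is then attacked by a cutoff-near-the-edge plus scaling argument modeled on Corollary \ref{cor.vte}. That is exactly the paper's strategy, and your identification of $(\chi\hat u)_I=\hat u_I$ (legitimate because $u=0$ on $e\subset\pa\Omega$) and of the identity $\kappa_e^{2na_e}=2^{-2mn}=h^{2m}$ are both correct.

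The gap is in the treatment of $L_{e,n}$ by a \emph{single} global anisotropic dilation $\operatorname{diag}(\kappa_e^{-n},\kappa_e^{-n},2^n)$. The last layer contains two kinds of elements of $\maT_n$: $e$-tetrahedra and $v_e$-tetrahedra touching the edge. For the $e$-tetrahedra your claim is essentially fine (their images have volume and diameter bounded above and below, as in Lemma \ref{lem.46} with the shear absorbed). But a $v_e$-tetrahedron $T_{(n)}\subset L_{e,n}$ whose $v_e$-ancestor chain starts at level $k$ has longitudinal extent $\sim 2^{-k}\kappa_e^{\,n-k}l_0$ and transverse extent $\sim\kappa_e^{\,n}$ (this is the content of Lemma \ref{lem410}); under your single map its image has transverse size $\sim 1$ but longitudinal size $\sim(2\kappa_e)^{\,n-k}$, which tends to $0$ whenever $\kappa_e<1/2$ and $n-k$ grows. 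Hence the induced triangulation of your reference region $\hat L$ is \emph{not} contained in finitely many similarity classes, and is not even uniformly shape regular: a one-parameter family of elements degenerates (flattens in the $z$-direction) as $n$ increases, so the uniform constant you invoke in $|\chi\hat u-(\chi\hat u)_I|_{H^1(\hat L)}\leq C|\chi\hat u|_{H^{m+1}(\hat L)}$ is unjustified without an additional anisotropic (maximum-angle-type) interpolation theory for exactly those degenerating images — which is the difficulty the whole paper is built to avoid. The paper's proof instead treats each last-layer tetrahedron individually with a type-dependent map: $\mathbf B_{e,n}$ sending an $e$-tetrahedron onto the fixed reference $\hat T$ (Case I), and $\mathbf B_{n,k}$, with the longitudinal factor $2^{k-1}\kappa_e^{k-n}$ depending on the ancestor level $k$, sending a $v_e$-tetrahedron onto a fixed $v_e$-element of $\hat\maT_1$ (Case II); the cutoff argument is then run on these fixed reference elements (near the edge $\hat e$ in Case I, near the singular vertex in Case II), and the scaling-back bookkeeping carries the $k$-dependent factors, as in (\ref{eqn.etr2}). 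To repair your proof you would either have to reproduce this per-element, $k$-dependent mapping, or else prove a uniform interpolation estimate for the degenerating images, e.g.\ by verifying a uniform maximum angle condition for them — neither of which is addressed in the proposal.
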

\begin{proof}
By Theorem \ref{thm.11}, it suffices to show the estimate for any tetrahedron $T_{(n)}\in\maT_n$ in the last layer $L_{e,n}$. We derive the desired estimate in the following two cases.

Case I: $T_{(n)}$ is an $e$-tetrahedron. By Lemma \ref{lem.46}, the mapping $\mathbf B_{e,n}$ translates $T_{(n)}$ to the reference tetrahedron $\hat T$. Consequently, it maps any point $(x,y , z)\in T_{(n)}$ to $(\hat x, \hat y, \hat z)\in \hat T$. For a function $v$ on $T_{(n)}$, we define $\hat v$ on $\hat T$ by
$$
\hat v(\hat x, \hat y, \hat z):=v(x, y, z).
$$
Now, let $\chi$ be a smooth cutoff function on $\hat T$ such that
$\chi=0$ in a neighborhood of the edge $\hat e:=\hat x_0\hat x_1$ and $=1$ at every other Lagrange node of
$\hat T$. Let $\rho_{\hat{e}}$ be the distance to $\hat e$.   Let $\hat u_I$ be the interpolation of $\hat u$ on the  reference tetrahedron $\hat T$.  Since $\chi\hat u=0$ in the neighborhood of $\hat e$, $(\chi\hat u)_{I}=\hat u_I$ and 
\be\label{eqn.equ1111}
|\chi\hat u|_{H^{m+1}(\hat T)}^2\leq C\sum_{|\alpha_\perp|+\alpha_3\leq m+1}\|\rho_{\hat e}^{|\alpha_\perp|-1}\pa^{\alpha_\perp}\pa_{\hat z}^{\alpha_3}\hat u\|^2_{L^2(\hat T)}.
\ee Define $\hat w:=\hat u-\chi\hat u$.   Then, by  the usual interpolation error estimate, we have
\begin{eqnarray}
    |\hat u - \hat u_{I} |_{H^1
    (\hat T)}& =&  |\hat w+\chi \hat u - \hat u_{I} |_{H^1 (\hat T)}  \leq  |\hat w|_{H^1
    (\hat T)}+|\chi\hat u-\hat u_{I}|_{H^1
    (\hat T)}\nonumber\\
  &=& |\hat w|_{H^1 (\hat T)}+|\chi\hat u -
  (\chi\hat u)_{I}|_{H^1 (\hat T)}\leq C(\|\hat u\|_{H^1 (\hat T)}+|\chi\hat u|_{H^{m+1}(\hat T)}), \label{eqn.new1111111}
\end{eqnarray}
where $C$ depends on $m$ and, through $\chi$, the nodes on $\hat T$. Then, using   the scaling argument based on (\ref{eqn.d111}), (\ref{eqn.new1111111}), (\ref{eqn.equ1111}), the relation $\rho_{\hat e}(\hat x, \hat y,\hat z)=\kappa_e^{-n}\rho_e(x, y, z)$,   and (\ref{eqn.ke1}), we have
 \begin{eqnarray*}
\|\partial_x(u-u_I)\|_{L^2(T_{(n)})}^2&\leq& C2^{-n}\big(\|\partial_{\hat x}(\hat u-\hat u_I)\|_{L^2(\hat T)}^2+\|\partial_{\hat z}(\hat u-\hat u_I)\|^2_{L^2(\hat T)}\big)\\
&\leq& C2^{-n}\sum_{|\alpha_\perp|+\alpha_3\leq m+1}\|\rho_{\hat e}^{|\alpha_\perp|-1}\pa^{\alpha_\perp}\pa_{\hat z}^{\alpha_3}\hat u\|^2_{L^2(\hat T)}  \\
&\leq& C \sum_{|\alpha_\perp|+\alpha_3\leq m+1}2^{-2n\alpha_3}\|\rho_e^{|\alpha_\perp|-1}\partial^{\alpha_\perp}\partial_z^{\alpha_3}u\|^2_{L^2(T_{(n)})}\\
&\leq& C\sum_{|\alpha_\perp|+\alpha_3\leq m+1}2^{-2n\alpha_3}\kappa_{e}^{2na_e}\|\rho_{e}^{|\alpha_\perp |-1-a_e}\partial^{\alpha_\perp}\partial_z^{\alpha_3}u\|^2_{L^2(T_{(n)})}\\
&\leq &Ch^{2m}\|u\|^2_{\mathcal M^{m+1}_{\ba+\bone}(T_{(n)})}.\nonumber
\end{eqnarray*}
A similar calculation for the derivative with respect to $y$ gives
\ben
\|\partial_y(u-u_I)\|_{L^2(T_{(n)})}\leq Ch^{m}\|u\|_{\mathcal M^{m+1}_{\ba+\bone}(T_{(n)})}.
\een
In the $z$-direction, using (\ref{eqn.new1111111}), (\ref{eqn.equ1111}),  (\ref{eqn.d111}),   and (\ref{eqn.ke1}), we have
\ben
\|\partial_z(u-u_I)\|_{L^2(T_{(n)})}^2&=& 2^{n}\kappa_e^{2n}\|\partial_{\hat z}(\hat u-\hat u_I)\|^2_{L^2(\hat T)}\\
&\leq& C2^{n}\kappa_e^{2n}\sum_{|\alpha_\perp|+\alpha_3\leq m+1}\|\rho_{\hat e}^{|\alpha_\perp|-1}\pa^{\alpha_\perp}\pa_{\hat z}^{\alpha_3}\hat u\|^2_{L^2(\hat T)}   \\
&\leq& C\sum_{|\alpha_\perp|+\alpha_3\leq m+1}2^{-2n\alpha_3}\|\rho_e^{|\alpha_\perp|-1}\partial^{\alpha_\perp}\partial_z^{\alpha_3}u\|^2_{L^2(T_{(n)})}\\
&\leq &C\kappa_{e}^{2na_{e}}\|u\|_{\mathcal M^{m+1}_{\ba+\bone}(T_{(n)})}^{2} \leq Ch^{2m}\|u\|_{\mathcal M^{m+1}_{\ba+\bone}(T_{(n)})}^2.\nonumber
\een
Thus, we have proved the estimate for Case I.

Case II: $T_{(n)}$ is a $v_e$-tetrahedron. Let $T_{(k)}\in \maT_k$, $1\leq k\leq n$, be the $v_e$-tetrahedron, such that $T_{(n)}\subset T_{(k)}$  and  $T_{(k)}$'s parent tetrahedron $T_{(k-1)}\in\maT_{k-1}$ is an $e$-tetrahedron.  By Lemma \ref{lem410}, the mapping $\mathbf B_{n,k}$ translates $T_{(n)}$ to a $v_e$-tetrahedron in $\hat  T_{(n)}\in\hat \maT_1$.  Thus, $\mathbf B_{n, k}$ maps every point $(x, y, z)\in T_{(n)}$ to $(\hat x, \hat y, \hat z)\in\hat T_{(n)}$. As in Case I, for a function $v$ on $T_{(n)}$, we define $\hat v$ on $\hat T_{(n)}$ by
$$
\hat v(\hat x, \hat y, \hat z):=v(x, y, z).
$$
Now let $\chi$ be a smooth cutoff function on $\hat T_{(n)}$ such that
$\chi=0$ in a neighborhood of the  singular vertex on  $\hat e:={\hat x_0\hat x_1}$ of $\hat T$ and $=1$ at every other Lagrange node of
$\hat T_{(n)}$.  Recall the distance $\rho_{\hat e}$ to $\hat e$. Since $\chi\hat u=0$ in the neighborhood of the singular vertex,  we have $(\chi\hat u)_{I}=\hat u_I$ on $\hat  T_{(n)}$ and 
\begin{eqnarray}\label{eqn.mm}
|\chi\hat u|^2_{H^{m+1}(\hat T_{(n)})}\leq C\sum_{|\alpha_\perp|+\alpha_3\leq m+1}\|\rho_{\hat e}^{|\alpha_\perp|-1}\pa^{\alpha_\perp}\pa_{\hat z}^{\alpha_3}\hat u\|^2_{L^2(\hat T_{(n)})}.\end{eqnarray} 
Define $\hat w:=\hat u-\chi\hat u$. Then, by  the usual interpolation error estimate, we have
\begin{eqnarray}
    |\hat u - \hat u_{I} |_{H^1
    (\hat T_{(n)})}& =&  |\hat w+\chi \hat u - \hat u_{I} |_{H^1 (\hat T_{(n)})}  \leq  |\hat w|_{H^1
    (\hat T_{(n)})}+|\chi\hat u-\hat u_{I}|_{H^1
    (\hat T_{(n)})}\nonumber\\
  &=& |\hat w|_{H^1 (\hat T_{(n)})}+|\chi\hat u -
  (\chi\hat u)_{I}|_{H^1 (\hat T_{(n)})}\leq C(\|\hat u\|_{H^1 (\hat T_{(n)})}+|\chi\hat u|_{H^{m+1}(\hat T_{(n)})}), \label{eqn.new111111} 
\end{eqnarray}
where $C$ depends on $m$ and, through $\chi$, the nodes in the
$\hat T_{(n)}$.  In $L_{e,n}$, $\rho_e(x, y, z)=\kappa_e^{n-1}\rho_{\hat e}(\hat x, \hat y, \hat z)$. Therefore, by (\ref{eqn.etr2}), (\ref{eqn.new111111}), (\ref{eqn.mm}), and (\ref{eqn.ke1}), we have
 \begin{eqnarray*}
\|\partial_x(u-u_I)\|_{L^2(T_{(n)})}^2&\leq& C2^{1-k}\kappa_e^{n-k}\big(\|\partial_{\hat x}(\hat u-\hat u_I)\|_{L^2(\hat T_{(n)})}^2+\|\partial_{\hat z}(\hat u-\hat u_I)\|^2_{L^2(\hat T_{(n)})}\big)\\
&\leq& C2^{1-k}\kappa_e^{n-k}\sum_{|\alpha_\perp|+\alpha_3\leq m+1}\|\rho_{\hat e}^{|\alpha_\perp|-1}\pa^{\alpha_\perp}\pa_{\hat z}^{\alpha_3}\hat u\|^2_{L^2(\hat T_{(n)})} \\
&\leq& C\sum_{|\alpha_\perp|+\alpha_3\leq m+1}2^{2(1-k)\alpha_3}\kappa_e^{2(n-k)\alpha_3}\|\rho_e^{|\alpha_\perp|-1}\partial^{\alpha_\perp}\partial_z^{\alpha_3}u\|^2_{L^2(T_{(n)})}\\
&\leq &C\kappa_{e}^{2na_{e}}\|u\|_{\mathcal M^{m+1}_{\ba+\bone}(T_{(n)})}^{2} \leq Ch^{2m}\|u\|_{\mathcal M^{m+1}_{\ba+\bone}(T_{(n)})}^2.\nonumber
\end{eqnarray*}
A similar calculation for the derivative with respect to $y$ gives
\ben
\|\partial_y(u-u_I)\|_{L^2(T_{(n)})}\leq Ch^{m}\|u\|_{\mathcal M^{m+1}_{\ba+\bone}(T_{(n)})}.
\een
In the $z$-direction, by (\ref{eqn.etr2}), (\ref{eqn.new111111}), (\ref{eqn.mm}),  and (\ref{eqn.ke1}), we have
\ben
\|\partial_z(u-u_I)\|_{L^2(T_{(n)})}^2&=& (2^{1-k}\kappa_e^{n-k})\kappa_e^{2(n-1)}(2^{k-1}\kappa_e^{k-n})^2\|\partial_{\hat z}(\hat u-\hat u_I)\|^2_{L^2(\hat T_{(n)})}\\
&\leq& C(2^{1-k}\kappa_e^{n-k})\kappa_e^{2(n-1)}(2^{k-1}\kappa_e^{k-n})^2\sum_{|\alpha_\perp|+\alpha_3\leq m+1}\|\rho_{\hat e}^{|\alpha_\perp|-1}\pa^{\alpha_\perp}\pa_{\hat z}^{\alpha_3}\hat u\|^2_{L^2(\hat T_{(n)})}  \\
&\leq& C\sum_{|\alpha_\perp|+\alpha_3\leq m+1}(2^{1-k}\kappa_e^{n-k})^{2\alpha_3}(2^{k-1}\kappa_e^{k})^2\|\rho_e^{|\alpha_\perp|-1}\partial^{\alpha_\perp}\partial_z^{\alpha_3}u\|^2_{L^2(T^{(n)})}\\
&\leq &C\kappa_{e}^{2na_{e}}\|u\|_{\mathcal M^{m+1}_{\ba+\bone}(T_{(n)})}^{2} \leq Ch^{2m}\|u\|_{\mathcal M^{m+1}_{\ba+\bone}(T_{(n)})}^2.\nonumber
\een
Thus, we have proved the estimate for Case II.

Hence, the corollary is proved by summing up the estimates in Theorem \ref{thm.11} and the estimates for all the tetrahedra $T_{(n)}$ in $L_{e, n}$.
\end{proof}

\subsection{Estimates on initial $ev$-tetrahedra in $\maT_0$} In this subsection, we denote by $T_{(0)}=\triangle^4x_0x_1x_2x_3\in\maT_0$ an $ev$-tetrahedron, such that $x_0=v\in\maV$ and $x_0x_1$ is on the edge $e\in\maE$. Then, we first define mesh layers associated with $\maT_n$ on $T_{(0)}$.

\begin{definition} \label{def.evlayer}(Mesh Layers in $ev$-tetrahedra)  For $1\leq i\leq n$, the $i$th refinement on $T_{(0)}$ produces a small tetrahedron with $x_{0}$ as a vertex. We denote by $P_{ev, i}$ the face of this small tetrahedron whose closure does not contain $x_0$ (see the last two pictures in Figure {\ref{fig.n3}}). Then, for the mesh $\maT_n$ on $T_{(0)}$, we define the $i$th mesh layer $L_{ev, i}$, $1\leq i<n$, as the region in $T_{(0)}$ between $P_{ev,i}$ and $P_{ev, i+1}$. We denote by $L_{ev, 0}$ the region in  $T_{(0)}$ between $\triangle^3x_1x_2x_3$ and $P_{ev, 1}$ and let $L_{ev, n}\subset T_{(0)}$ be the small tetrahedron with $x_0$ as a vertex that is  generated in the $n$th refinement.
\end{definition}

For each $ev$-tetrahedron, one extra refinement results in one $ev$-tetrahedron, one $e$-tetrahedron, two $v_e$-tetrahedra, and four $o$-tetrahedra. Let $T=\triangle^4\gamma_0\gamma_1\gamma_2\gamma_3\subset T_{(0)}$ be an $ev$-tetrahedron generated by some subsequent refinements of $T_{(0)}$, with $\gamma_0=x_0$ and $\gamma_0\gamma_1$  on the edge $e\in\maE$.  We  define the relative distances $c_{\gamma, 1}$ and $c_{\gamma, 2}$ for $T$ using the same notation as in Definition \ref{def.rdis} (see also Remark \ref{rk.cp1}). In the next lemma, we show the analogue of Lemma \ref{lem.451} for  $ev$-tetrahedra. Namely, the relative  distances are bounded   for $ev$-tetrahedra with respect to the refinement.

\begin{lemma}\label{lem.new451} Let $T=\triangle^4\gamma_0\gamma_1\gamma_2\gamma_3\subset T_{(0)}$ be an $ev$-tetrahedron in $\maT_i$, $1\leq i<n$, with $\gamma_0=x_0$ and $\gamma_0\gamma_1$  on the edge $e\in\maE$.  Let $T_R\subset T$ be the $ev$-tetrahedron  in $\maT_{i+1}$. Denote by $c_T$ and $c_R$ the absolute distances (\ref{eqn.adis}) for $T$ and $T_R$, respectively. Then, $c_R\leq \max(c_T, 1)$.
\end{lemma}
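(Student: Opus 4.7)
The plan is to mimic the case analysis of Lemma \ref{lem.451}, adapted to Case V of Algorithm \ref{alg.n1}. From that refinement rule, the unique $ev$-sub-tetrahedron is $T_R = \triangle^4\gamma_0\gamma_{01}\gamma_{02}\gamma_{03}$, where
\[
\gamma_{01} = (1-\kappa_v)\gamma_0 + \kappa_v\gamma_1, \qquad \gamma_{0l} = (1-\kappa_{ev})\gamma_0 + \kappa_{ev}\gamma_l \quad (l = 2, 3).
\]
The singular vertex of $T_R$ is $\gamma_0$ and its singular edge $\gamma_0\gamma_{01}$ lies on $e$, so $\gamma_0$ is the first vertex and $\gamma_{01}$ the second vertex of $T_R$; the off-edge vertices are $\gamma_{02}$ and $\gamma_{03}$.

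Next I would set up a local Cartesian frame with $\gamma_0$ at the origin and the $z$-axis aligned with $\overrightarrow{\gamma_0\gamma_1}$. With $l = |\gamma_0\gamma_1|$ and $\gamma_k = (p_k, q_k, z_k)$ for $k = 2, 3$, Definition \ref{def.rdis} gives $c_{\gamma_k,1} = z_k/l$ and $c_{\gamma_k,2} = 1 - z_k/l$. The orthogonal projection of $\gamma_{0k}$ on the $z$-axis is $(0,0,\kappa_{ev}z_k)$ and $|\gamma_0\gamma_{01}| = \kappa_v l$, so a direct computation yields
\[
c^R_{\gamma_{0k},1} = \frac{\kappa_{ev}}{\kappa_v}\, c_{\gamma_k,1}, \qquad c^R_{\gamma_{0k},2} = 1 - c^R_{\gamma_{0k},1}, \qquad k = 2, 3.
\]
The decisive input is that Algorithm \ref{alg.n1} (V) imposes $\kappa_{ev} = \min_{\ell \in I_v}(\kappa_v, \kappa_\ell) \le \kappa_v$, so the contraction factor $\kappa_{ev}/\kappa_v$ lies in $(0, 1]$.

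The remaining step is a three-case bookkeeping on the sign of $c_{\gamma_k,1}$, entirely parallel to Lemma \ref{lem.451}. If $c_{\gamma_k,1} \ge 0$ and $(\kappa_{ev}/\kappa_v)\, c_{\gamma_k,1} \le 1$, then $|c^R_{\gamma_{0k},1}| \le |c_{\gamma_k,1}|$ and $|c^R_{\gamma_{0k},2}| \le 1$. If instead $(\kappa_{ev}/\kappa_v)\, c_{\gamma_k,1} > 1$, then $|c^R_{\gamma_{0k},2}| = c^R_{\gamma_{0k},1} - 1 < c^R_{\gamma_{0k},1} \le |c_{\gamma_k,1}|$. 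If $c_{\gamma_k,1} < 0$, then $c^R_{\gamma_{0k},2} > 1$ and, using the identity $|c_{\gamma_k,2}| = 1 + |c_{\gamma_k,1}|$ valid in this sign regime,
\[
|c^R_{\gamma_{0k},2}| = 1 + \frac{\kappa_{ev}}{\kappa_v}\,|c_{\gamma_k,1}| \le 1 + |c_{\gamma_k,1}| = |c_{\gamma_k,2}|.
\]
In every case $\max(|c^R_{\gamma_{0k},1}|, |c^R_{\gamma_{0k},2}|) \le \max(|c_{\gamma_k,1}|, |c_{\gamma_k,2}|, 1)$, and combining the $k = 2$ and $k = 3$ bounds through (\ref{eqn.adis}) gives $c_R \le \max(c_T, 1)$.

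The only subtlety I expect requires care is precisely the sign-negative case $c_{\gamma_k,1} < 0$, where the naive estimate $|c^R_{\gamma_{0k},2}| = 1 + (\kappa_{ev}/\kappa_v)|c_{\gamma_k,1}|$ would violate the bound $\max(c_T, 1)$ if compared only against $|c_{\gamma_k,1}|$; it is essential to compare against $|c_{\gamma_k,2}|$ instead, where the extra $+1$ is absorbed. Apart from this sign analysis, the argument is purely affine, uses no shape-regularity hypothesis on $T_{(0)}$, and depends only on the structure of Case V of Algorithm \ref{alg.n1} together with the ordering $\kappa_{ev} \le \kappa_v$.
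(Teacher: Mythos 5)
Your proposal is correct and follows essentially the same route as the paper: identify the $ev$-child $\triangle^4\gamma_0\gamma_{01}\gamma_{02}\gamma_{03}$, derive $c^R_{\gamma_{0k},1}=(\kappa_{ev}/\kappa_v)c_{\gamma_k,1}$ with $\kappa_{ev}\le\kappa_v$, and run the same three-case sign analysis (your coordinate computation replaces the paper's similar-triangle argument on the face $\triangle^3\gamma_0\gamma_1\gamma_2$, which is only a cosmetic difference). The only tiny omission is that in the case $c_{\gamma_k,1}<0$ you should also record $|c^R_{\gamma_{0k},1}|\le|c_{\gamma_k,1}|$, which is immediate from the contraction factor.
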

\begin{proof} Recall the grading parameters $\kappa_v$, $\kappa_e$, and $\kappa_{ev}$ for $T_{(0)}$ with  $\kappa_v, \kappa_e\geq \kappa_{ev}$. We use Figure \ref{fig.441} to demonstrate the proof. Then, $T_R=\triangle^4\gamma_0\gamma_4\gamma_5\gamma_6$. Consider the triangles on the face $\triangle^3\gamma_0\gamma_1\gamma_2$ of $T$, induced by the sub-tetrahedra after one refinement on $T$, where $\gamma_5'$ and $\gamma_2'$ are the orthogonal projections of $\gamma_5$ and $\gamma_2$ on the singular edge. However, note that instead of the mid-point of $\gamma_0\gamma_1$ for the $e$-tetrahedron, the location of $\gamma_4$ here is given by $|\gamma_0\gamma_4|=\kappa_v|\gamma_0\gamma_1|$ for the $ev$-tetrahedron. Let $c_{\gamma_2, 1}, c_{\gamma_2, 2}$ (resp. $c^R_{\gamma_5, 1}, c^R_{\gamma_5, 2}$) be the relative distances  of $\gamma_2$ in $T$ (resp. $\gamma_5$ in $T_R$).  

Based on Algorithm \ref{alg.n1}, $|\gamma_0\gamma_5'|=\kappa_{ev}|\gamma_0\gamma_2'|$. By (\ref{eqn.cg1}),  $c_{\gamma_5, 1}^R$ and $c_{\gamma_2, 1}$ have the same sign. Then, we first show $|c_{\gamma_5, 1}^R|, |c_{\gamma_5, 2}^R|\leq \max(|c_{\gamma_2, 1}|, |c_{\gamma_2, 2}|,1)$ by considering the following cases, in which the calculations are based on the definitions in (\ref{eqn.cg1}) and (\ref{eqn.cg2}).

If $ c_{\gamma_2,1}<0$, we have
$$
0> c^R_{\gamma_5,1}=-|{\gamma_0\gamma_5'}|/|{\gamma_0\gamma_4}|=-\kappa^{-1}_v\kappa_{e,v}|{\gamma_{0}\gamma_2'}|/|{\gamma_{0}\gamma_1}|=\kappa^{-1}_v\kappa_{ev} c_{\gamma_2, 1}\geq c_{\gamma_2,1}.
$$
Therefore, $|c^R_{\gamma_5,1}|\leq |c_{\gamma_2,1}|$.
Meanwhile, we have $$1\leq c^R_{\gamma_5,2}=1-c^R_{\gamma_5,1}=1-\kappa^{-1}_v\kappa_{ev} c_{\gamma_2,1}<1-c_{\gamma_2,1}=c_{\gamma_2,2}.$$
Therefore, $|c^R_{\gamma_5,2}|\leq |c_{\gamma_2,2}|$.


If $0\leq c_{\gamma_2,1}<\kappa_v\kappa_{ev}^{-1}$, we have $c^R_{\gamma_5,1}\geq 0$ and
$$
c^R_{\gamma_5,1}=|{\gamma_0\gamma_5'}|/|{\gamma_0\gamma_4}|=\kappa^{-1}_v\kappa_{ev}|{\gamma_0\gamma_2'}|/|{\gamma_0\gamma_4}|=\kappa^{-1}_v\kappa_{ev} c_{\gamma_2,1}<1.
$$
Meanwhile, we have $$0\leq c^R_{\gamma_5,2}=1-c^R_{\gamma_5,1}\leq  1.$$

If $ c_{\gamma_2,1}\geq \kappa_v\kappa_{ev}^{-1}$, we have
$$
1\leq c^R_{\gamma_5,1}=|{\gamma_0\gamma_5'}|/|{\gamma_0\gamma_4}|=\kappa^{-1}_v\kappa_{ev}|{\gamma_0\gamma_2'}|/|{\gamma_0\gamma_4}|=\kappa^{-1}_v\kappa_{ev} c_{\gamma_2,1}\leq |c_{\gamma_2,1}|.
$$
Meanwhile, we have $$0\geq c^R_{\gamma_5,2} =1-c^R_{\gamma_5,1} =1-\kappa^{-1}_v\kappa_{ev} c_{\gamma_2,1}\geq 1-c_{\gamma_2,1}=c_{\gamma_2, 2}.$$
Therefore, $|c^R_{\gamma_5,2}|\leq |c_{\gamma_2,2}|$.

Hence, $|c^R_{\gamma_5,1}|, |c^R_{\gamma_5,2}|\leq \max(|c_{\gamma_2,1}|, |c_{\gamma_5,2}|,1)$. Using a similar calculation, we can also obtain the same estimate for relative distances of $\gamma_3$ and $\gamma_6$. Then, the proof is completed by combining these estimates and by the definition of the absolute distance (\ref{eqn.adis}). 
\end{proof}

Now, we define the reference element for the $ev$-tetrahedron.

\begin{definition} \label{def.evrt}(The Reference $ev$-tetrahedron) 
We shall use the  tetrahedron $\hat T=\triangle^4\hat x_0\hat x_1\hat x_2\hat x_3$ in Definition \ref{def.rt} as our reference element in this subsection. 
For $T_{(0)}$, recall the grading parameters  $\kappa_v$ and $\kappa_e$ associated with $x_0$ and $x_0x_1$, respectively. For the reference $ev$-tetrahedron $\hat T$, one graded refinement using the same parameters $\kappa_v, \kappa_e$, and $\kappa_{ev}$ for $\hat x_0$ and $\hat x_0\hat x_1$ gives rise to a triangulation on $\hat T$, which we denote by $\hat \maT_1$. Define the union of the seven tetrahedra in $\hat\maT_1$ away from $\hat x_0$ to be the mesh layer $\hat L$ on $\hat T$. We denote by   $\hat \maL$ the initial triangulation of $\hat L$ that contains these seven tetrahedra. 
\end{definition}

Then, we construct a mapping from an $ev$-tetrahedron $T\subset T_{(0)}$ in $\maT_i$ to $\hat T$. 

\begin{lemma}\label{lem.nnnew}
For an $ev$-tetrahedron $T:=\triangle^4\gamma_0\gamma_1\gamma_2\gamma_3\subset T_{(0)}$ in $\maT_i$, $0\leq i\leq n$, suppose $\gamma_0=v\in\maV$ and $\gamma_0\gamma_1\subset e\in\maE$. Use a local Cartesian coordinate system, such that $(\gamma_0+\gamma_1)/2$ is the origin, $\gamma_1$ is in the positive $z$-axis,  and $\gamma_2$ is in the $xz$-plane.  Then, there is a mapping
\begin{eqnarray}\label{eqn.mapt1111}
\mathbf B_{ev, i}= \begin{pmatrix}
   \kappa^{-i}_{ev}   &   0 & 0 \\
  0    &   \kappa^{-i}_{ev} & 0\\
  b_1 \kappa^{-i}_{ev}\ & b_2 \kappa^{-i}_{ev} &  \kappa_v^{-i}
\end{pmatrix}
\end{eqnarray}
with $|b_1|,|b_2|\leq C_0$, for $C_0\geq 0$ depending on $T_{(0)}$, 
such that  $\mathbf B_{ev, i}: T\rightarrow \hat T$ is a bijection. 
\end{lemma}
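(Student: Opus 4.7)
The plan is to construct $\mathbf B_{ev,i}$ as a product $\mathbf A_2 \mathbf A_1$ of a diagonal anisotropic dilation and a shear, exactly mirroring the two-step construction used for the $e$-tetrahedron in Lemma \ref{lem.46}. The new feature is that the singular edge $\gamma_0\gamma_1$ shrinks at rate $\kappa_v^i$ rather than $2^{-i}$, because case (V) of Algorithm \ref{alg.n1} places $x_{01}=(1-\kappa_v)x_0+\kappa_v x_1$; the transverse directions, by contrast, shrink at rate $\kappa_{ev}^i$ coming from $x_{0l}=(1-\kappa_{ev})x_0+\kappa_{ev} x_l$ for $l=2,3$.

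First, I would compute the vertex coordinates. Because the $ev$-sub-tetrahedron at each level contains $x_0$ as a vertex, iterating case (V) $i$ times yields the explicit relations $\gamma_1-\gamma_0=\kappa_v^i(x_1-x_0)$ and $\gamma_k-\gamma_0=\kappa_{ev}^i(x_k-x_0)$ for $k=2,3$. In the local frame specified in the statement, and with the coordinates of $\hat T$ from Definition \ref{def.rt}, this pins all four $\gamma_k$ down in closed form.

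Next, I apply $\mathbf A_1=\operatorname{diag}(\kappa_{ev}^{-i},\kappa_{ev}^{-i},\kappa_v^{-i})$. This already sends $\gamma_0\mapsto\hat x_0$, $\gamma_1\mapsto\hat x_1$, and matches the $x$- and $y$-components of $\gamma_2,\gamma_3$ with those of $\hat x_2,\hat x_3$; the only residual defect is in the $z$-components, which are off by a multiple of $(\kappa_{ev}/\kappa_v)^i$ times a $T_{(0)}$-dependent constant. I correct this by composing on the left with the shear $\mathbf A_2$ having unit diagonal and third row $(b_1,b_2,1)$, and by solving $\mathbf A_2\mathbf A_1\gamma_k=\hat x_k$ for $k=2,3$ to produce explicit formulas for $b_1,b_2$, each proportional to $(\kappa_{ev}/\kappa_v)^i$. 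A direct multiplication then confirms that $\mathbf A_2\mathbf A_1$ has the form in \eqref{eqn.mapt1111}.

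The one step requiring a substantive observation is the uniform bound $|b_1|,|b_2|\leq C_0$. This is where the definition of the mixed parameter enters: since $\kappa_{ev}=\min_{\ell\in I_v}(\kappa_v,\kappa_\ell)\leq\kappa_v$ by case (V) of Algorithm \ref{alg.n1}, the ratio $(\kappa_{ev}/\kappa_v)^i\leq 1$ for every $i\geq 0$. The remaining factors in $b_1,b_2$ are ratios of the constants $\hat\lambda_2,\hat\lambda_3,\hat\xi_3$ and the $z$-components of $x_2-x_0,x_3-x_0$, all depending only on $T_{(0)}$ and with $\hat\lambda_2,\hat\xi_3\neq 0$ by its shape regularity. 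Unlike the $e$-tetrahedron case, no absolute-distance bound such as Lemma \ref{lem.new451} is needed here, because the $ev$-sub-tetrahedron at each level is by construction a genuine anisotropic rescaling of $T_{(0)}$, so its shape is fully determined and the only issue is calibrating the mismatch between the two contraction rates along and across the edge.
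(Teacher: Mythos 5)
Your construction is correct and coincides with the paper's: the paper also writes $\mathbf B_{ev,i}=\mathbf A_2\mathbf A_1$ with $\mathbf A_1=\operatorname{diag}(\kappa_{ev}^{-i},\kappa_{ev}^{-i},\kappa_v^{-i})$ and a shear $\mathbf A_2$ with third row $(b_1,b_2,1)$, solving for $b_1,b_2$ so that the $z$-coordinates of the images of $\gamma_2,\gamma_3$ are corrected to $-l_0/2$. The one place you diverge is the uniform bound on $b_1,b_2$: the paper bounds the $z$-coordinates $\zeta_2,\zeta_3$ of $\gamma_2,\gamma_3$ by $C\kappa_v^i l_0$ by invoking the absolute-distance bound of Lemma \ref{lem.new451}, whereas you observe that iterating case (V) of Algorithm \ref{alg.n1} gives the closed-form relations $\gamma_1-\gamma_0=\kappa_v^i(x_1-x_0)$ and $\gamma_k-\gamma_0=\kappa_{ev}^i(x_k-x_0)$, $k=2,3$ (valid because the $ev$-child containing $x_0$ is unique at every level, unlike the branching family of $e$-children), so that $b_1,b_2$ come out proportional to $(\kappa_{ev}/\kappa_v)^i\leq 1$ times constants of $T_{(0)}$. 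Both arguments are sound and give the same bound; yours is slightly more self-contained for this lemma and makes the exact dependence of $b_1,b_2$ on $i$ explicit, while the paper's reuses its already-proved Lemma \ref{lem.new451} and keeps the exposition parallel to the $e$-tetrahedron case in Lemma \ref{lem.46}, where no such closed-form formula exists.
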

\begin{proof} Recall $\hat\lambda_k$ and $\hat\xi_k$, $k=2,3$, in Definition \ref{def.rt}. 
Based on Algorithm \ref{alg.n1}, we have $\gamma_2=(\kappa^{i}_{ev}\hat\lambda_2, 0, \zeta_2)$,  $\gamma_3=(\kappa^{i}_{ev}\hat\lambda_3,\kappa^{i}_{ev}\hat\xi_3, \zeta_3$), and   $|{\gamma_0\gamma_1}|=\kappa_v^{i}l_0$=$\kappa_v^{i}|x_0x_1|$, where $\zeta_2$ and $\zeta_3$ are the $z$-coordinates of the vertices $\gamma_2$ and $\gamma_3$, respectively.  Then, the transformation 
\begin{eqnarray*}
\mathbf A_1:= \begin{pmatrix}
   \kappa^{-i}_{ev}   &   0 & 0 \\
  0    &   \kappa^{-i}_{ev} & 0\\
  0 & 0 & \kappa_v^{-i}
\end{pmatrix}
\end{eqnarray*}
maps $T$ to a tetrahedron, with vertices $\mathbf A_1\gamma_0=(0, 0, -l_0/2)$,  $\mathbf A_1\gamma_1=(0, 0, l_0/2)$, $\mathbf A_1\gamma_2=(\hat\lambda_2, 0, \kappa_v^{-i}\zeta_2)$, and $\mathbf A_1\gamma_3=(\hat\lambda_3, \hat\xi_3, \kappa_v^{-i}\zeta_3)$. Now, let 
$$b_1=-(\kappa_v^{-i}\zeta_2+l_0/2)/\hat\lambda_2, \qquad b_2=[\kappa_v^{-i}(\zeta_2\hat\lambda_3-\hat\lambda_2\zeta_3)+l_0(\hat\lambda_3-\hat\lambda_2)/2]/\hat\lambda_2\hat\xi_3.$$
Define 
\begin{eqnarray*}
\mathbf A_2:= \begin{pmatrix}
   1   &   0 & 0 \\
  0    &   1 & 0\\
  b_1 & b_2 & 1
\end{pmatrix}.
\end{eqnarray*}
By Lemma \ref{lem.new451}, the absolute distance for $T$ is bounded by a constant determined by $T_{(0)}$. Therefore,  we have $|\zeta_2|, |\zeta_3|\leq C|{\gamma_0\gamma_1}|=C\kappa_v^{i}l_0$, where $C$ depends on the shape regularity of $T_{(0)}$. In addition, since $\hat\lambda_2, \hat\lambda_3, \hat\xi_3$ all depend on the shape regularity of $T^{(0)}$ and $\hat\lambda_2,  \hat\xi_3\neq 0$, the transformation
\begin{eqnarray*}
\mathbf B_{ev, i}:=\mathbf A_2\mathbf A_1= \begin{pmatrix}
   \kappa^{-i}_{ev}   &   0 & 0 \\
  0    &   \kappa^{-i}_{ev} & 0\\
  b_1 \kappa^{-i}_{ev}\ & b_2 \kappa^{-i}_{ev} & \kappa_v^{-i}
\end{pmatrix}
\end{eqnarray*}
maps $T$ to $\hat T$ with  $|b_1|,|b_2|\leq C_0$, where $C_0\geq 0$ depends on $T_{(0)}$ but not on $i$. This completes the proof.
 \end{proof}

Using the mapping in Lemma \ref{lem.nnnew}, we present the interpolation error estimate in the mesh layer $L_{ev, i}$.

\begin{theorem}\label{lem.eve} Let $T_{(0)}=\triangle^4x_0x_1x_2x_3\in\maT_0$ be an $ev$-tetrahedron defined above. Let   $L_{ev, i}$ be the mesh layer in  Definition \ref{def.evlayer}, $0\leq i<n$. Recall the parameters $a_v$, $a_e$, and $a_{ev}$ associated to $\kappa_v$, $\kappa_e$, and $\kappa_{ev}$ in (\ref{eqn.kv1}) -- (\ref{eqn.kev1}). Define 
\be\label{eqn.av}a_V:= (m+1)(1-a^{-1}_va_{ev})+a_{ev}.\ee
Suppose 
$$\sum_{|\alpha_\perp|+\alpha_3\leq m+1}\|\rho_v^{\alpha_3-a_V+a_{e}}\rho_{e}^{|\alpha_\perp |-1-a_e}\partial^{\alpha_\perp}\partial_z^{\alpha_3}u\|^2_{L^2(T_{(0)})}<\infty.$$ Let  $u_I$ be the nodal interpolation on $\maT_n$. Then, we have 
$$
|u-u_I|^2_{H^1(L_{ev, i})}\leq Ch^{2m}\sum_{|\alpha_\perp|+\alpha_3\leq m+1}\|\rho_v^{\alpha_3-a_V+a_{e}}\rho_{e}^{|\alpha_\perp |-1-a_e}\partial^{\alpha_\perp}\partial_z^{\alpha_3}u\|^2_{L^2(L_{ev,i})},
$$
where $h=2^{-n}$ and $C$ depends on $T_{(0)}$ and $m$.
\end{theorem}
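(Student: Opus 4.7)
The plan is to imitate the strategy of Corollary \ref{cor.ete} for the $e$-tetrahedron, using the reference $ev$-tetrahedron $\hat T$ of Definition \ref{def.evrt} in place of the reference $e$-tetrahedron. The key observation is that the reference mesh layer $\hat L$ decomposes via $\hat\maL$ into seven sub-tetrahedra of previously handled types: one $e$-tetrahedron, two $v_e$-tetrahedra, and four $o$-tetrahedra. First I would let $T_{(i)} \in \maT_i$ be the parent $ev$-tetrahedron containing $L_{ev,i}$. By Lemma \ref{lem.nnnew}, the affine map $\mathbf{B}_{ev,i}$ sends $T_{(i)}$ to $\hat T$ and $L_{ev,i}$ to $\hat L$, and carries the restriction of $\maT_n$ to $L_{ev,i}$ onto the mesh on $\hat L$ obtained from $\hat\maL$ by $n-i-1$ further type-appropriate refinements (edge-graded with parameter $\kappa_e$ on the $e$- and $v_e$-type pieces of $\hat\maL$, and uniform on the $o$-type pieces).

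Next I would apply Lemma \ref{lem.ote}, Corollary \ref{cor.vte}, and Corollary \ref{cor.ete} on the respective reference sub-tetrahedra of $\hat\maL$. Since $\hat L$ is bounded away from the reference vertex $\hat x_0$, the weighted norms on the reference collapse to ones involving only the edge weight $\rho_{\hat e}^{|\alpha_\perp|-1-a_e}$, and summing over the seven pieces yields
\begin{equation*}
|\hat u - \hat u_I|_{H^1(\hat L)}^2 \;\leq\; C\,\hat h^{2m}\sum_{|\alpha_\perp|+\alpha_3 \le m+1}\big\|\rho_{\hat e}^{|\alpha_\perp|-1-a_e}\pa^{\alpha_\perp}\pa_{\hat z}^{\alpha_3}\hat u\big\|_{L^2(\hat L)}^2,
\end{equation*}
with $\hat h = 2^{-(n-i-1)}$.

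The third step is to push this estimate back to $L_{ev,i}$ through $\mathbf{B}_{ev,i}$. The Jacobian gives $dx\,dy\,dz = \kappa_{ev}^{2i}\kappa_v^i\,d\hat x\,d\hat y\,d\hat z$; the chain rule writes $\pa_x, \pa_y$ as linear combinations of $\kappa_{ev}^{-i}\pa_{\hat x}, \kappa_{ev}^{-i}\pa_{\hat y}, \kappa_{ev}^{-i}\pa_{\hat z}$ and gives $\pa_z = \kappa_v^{-i}\pa_{\hat z}$; and the edge distance transforms as $\rho_{\hat e} = \kappa_{ev}^{-i}\rho_e$. Expanding $\pa^{\hat\alpha_\perp}\pa_{\hat z}^{\hat\alpha_3}\hat u$ into terms of the form $\pa^{\beta_\perp}\pa_z^{\beta_3}u$ with $|\beta_\perp|+\beta_3 = |\hat\alpha_\perp|+\hat\alpha_3$ and $\beta_3 \ge \hat\alpha_3$ produces scaling factors $\kappa_{ev}^{i|\beta_\perp|}\kappa_v^{i\beta_3}$. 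Running the derivative-by-derivative bookkeeping of Corollary \ref{cor.ete} term by term, using $\rho_e \lesssim \kappa_{ev}^i$ in $L_{ev,i}$ to convert the spurious $\rho_e^{\beta_3-\hat\alpha_3}$ factors into $\kappa_{ev}$-powers, and then $\rho_v \sim \kappa_v^i$ to absorb the residual $\kappa_v$-powers into a target weight $\rho_v^{2(\alpha_3-a_V+a_e)}$, leads to a prefactor of the form $2^{2m(i-n)}\kappa_{ev}^{2ia_e}\kappa_v^{2i(a_V-a_e)}$ which collapses to $h^{2m}$ via $\kappa_v^{ia_v}=\kappa_{ev}^{ia_{ev}}=2^{-mi}$ and the algebraic identity \eqref{eqn.av}.

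I expect the main obstacle to be this final pullback bookkeeping. Because $\mathbf{B}_{ev,i}$ mixes an isotropic $z$-scaling $\kappa_v^{-i}$ with an anisotropic transverse scaling $\kappa_{ev}^{-i}$, and because the reference-side bound only carries the edge weight $\rho_{\hat e}$ while the target weight is split between $\rho_v$ and $\rho_e$, every hat multi-index $(\hat\alpha_\perp,\hat\alpha_3)$ spawns several target multi-indices $(\beta_\perp,\beta_3)$, each with a different balance of $\kappa_v^i$ versus $\kappa_{ev}^i$. Checking uniformly, across all such combinations and separately for each of the three derivative directions, that the surplus $\kappa_v$-power lands exactly in the exponent $\alpha_3-a_V+a_e$ of $\rho_v$ is the technical heart of the argument, and is precisely what forces the choice of $a_V$ in \eqref{eqn.av}.
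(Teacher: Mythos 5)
Your overall route is the paper's route: map $L_{ev,i}$ to the reference layer $\hat L$ by $\mathbf B_{ev,i}$, observe that the induced mesh on $\hat L$ comes from $n-i-1$ further refinements of $\hat\maL$ (which contains no $v$- or $ev$-tetrahedra, so only the edge weight survives on the reference side), invoke the $o$-, $v_e$- and $e$-tetrahedron estimates there, and pull back. Your Jacobian, chain rule, and the relation $\rho_{\hat e}=\kappa_{ev}^{-i}\rho_e$ are all correct, and your reference-side estimate agrees with the one the paper uses.

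The gap is in the absorption step you rely on at the end: you assert $\rho_v\sim\kappa_v^i$ on $L_{ev,i}$ and use it to trade the residual $\kappa_v$-powers for the weight $\rho_v^{\alpha_3-a_V+a_e}$, producing a single uniform prefactor $2^{2m(i-n)}\kappa_{ev}^{2ia_e}\kappa_v^{2i(a_V-a_e)}$. But on $L_{ev,i}$ one only has the two-sided bound $\kappa_{ev}^{i}\lesssim\rho_v\lesssim\kappa_v^i$ (the layer has length $\sim\kappa_v^i$ along the edge but transverse extent only $\sim\kappa_{ev}^i$, so when $\kappa_{ev}<\kappa_v$ the ratio $\kappa_v^i/\rho_v$ is unbounded in $i$ near the singular edge). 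Since the exponent $\alpha_3-a_V+a_e$ changes sign as $\alpha_3$ runs from $0$ to $m+1$, the substitution $\rho_v\sim\kappa_v^i$ is not a one-sided harmless estimate: for $\alpha_3>a_V-a_e$ the weight must be bounded \emph{below} via $\rho_v\gtrsim\kappa_{ev}^i$, and the required inequality becomes $2^{im}\kappa_v^{i\alpha_3}\lesssim\kappa_{ev}^{i(\alpha_3-a_V)}$, which is exactly where the definition (\ref{eqn.av}) of $a_V$, the relations $\kappa_v^{a_v}=\kappa_{ev}^{a_{ev}}=2^{-m}$, and the constraint $\alpha_3\leq m+1$ must be used; for small $\alpha_3$ one instead uses $\rho_v\lesssim\kappa_v^i$ together with $a_V\geq a_v$. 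This is precisely the paper's case analysis (its cases (I)--(III) in $\alpha_3$) establishing $2^{im}\kappa_v^{i\alpha_3}\kappa_{ev}^{ia_e}\lesssim\rho_v^{\alpha_3-a_V+a_e}$, and it is the technical heart you flag but do not supply; the mechanism you do propose (two-sided comparability with $\kappa_v^i$) is false in the anisotropic regime $a_{ev}<a_v$ that the theorem is designed to cover, so as written the proof does not close.
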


\begin{proof} Let $T_{(i)}\subset T_{(0)}$ be the $ev$-tetrahedron in $\maT_i$. Then by Definition \ref{def.evlayer}, we have $L_{ev, i}=T_{(i)}\setminus T_{(i+1)}$.  Then, the mapping  $\mathbf B_{ev,i}$ in (\ref{eqn.mapt1111}) translates $L_{ev,i}$ to $\hat{L}$ (see Definition \ref{def.evrt}). For a point $(x,y,z)\in L_{ev,i}$, let  $(\hat x, \hat y, \hat z)\in \hat L$ be its image under $\mathbf B_{ev,i}$. For a function $v$ on $L_{ev, i}$, define the function $\hat v$ on $\hat L$ by 
$$
\hat v(\hat x, \hat y, \hat z) :=v(x, y, z).
$$
Let $\rho_{\hat e}$ be the distance to $\hat x_0\hat x_1$ on the reference tetrahedron $\hat T$.   Then, it is clear that $\rho_{e}(x, y, z)=\kappa_{ev}^{i}\rho_{\hat e}(\hat x, \hat y, \hat z)$ on $L_{ev,i}$.  Meanwhile, $\mathbf B_{ev, i}$ maps the triangulation $\maT_n$ on $L_{ev, i}$ to a graded triangulation  on $\hat L$ that is obtained after $i+1-n$ refinements of the initial mesh $\hat \maL$. Note that the subsequent refinements on $\hat\maL$ are anisotropic with the parameter $\kappa_{e}$ toward $\hat x_0\hat x_1$, since $\hat\maL$ does not contain $ev$- or $v$-tetrahedra.

Then, by the mapping  (\ref{eqn.mapt1111}),  the scaling argument, Corollary \ref{cor.ete}, (\ref{eqn.rhoe}), and (\ref{eqn.ke1}), we have
 \begin{eqnarray}
\|\partial_x(u-u_I)\|_{L^2(L_{ev,i})}^2&\leq& C\kappa_v^{i}\big(\|\partial_{\hat x}(\hat u-\hat u_I)\|_{L^2(\hat L)}^2+\|\partial_{\hat z}(\hat u-\hat u_I)\|^2_{L^2(\hat L)}\big)\nonumber\\
&\leq& C\kappa_v^{i}2^{2m(i-n)}\sum_{|\alpha_\perp|+\alpha_3\leq m+1}\|\rho_{\hat{e}}^{|\alpha_\perp|-1-a_e}\partial^{\alpha_\perp}\partial_{\hat{z}}^{\alpha_3}\hat u\|^2_{L^2(\hat L)} \nonumber \\
&\leq& C2^{2m(i-n)}\sum_{|\alpha_\perp|+\alpha_3\leq m+1}\kappa_v^{2i\alpha_3}\kappa_{ev}^{2ia_e}\|\rho_e^{|\alpha_\perp|-1-a_e}\partial^{\alpha_\perp}\partial_z^{\alpha_3}u\|^2_{L^2(L_{ev,i})}\nonumber\\
&\leq& C2^{-2mn}\sum_{|\alpha_\perp|+\alpha_3\leq m+1}2^{2im}\kappa_v^{2i\alpha_3}\kappa_{ev}^{2ia_e}\|\rho_{e}^{|\alpha_\perp |-1-a_e}\partial^{\alpha_\perp}\partial_z^{\alpha_3}u\|^2_{L^2(L_{ev,i})}.\label{eqn.av-1}
\end{eqnarray}
Note that $\kappa_{ev}^{i}\lesssim\rho_v\lesssim\kappa_v^i$ on $L_{ev, i}$,  $a_v, a_e\geq a_{ev}$ (see (\ref{eqn.kev1})) and $a_V\geq a_v$.  Then, we consider all the possible  cases below.\\
(I) ($\alpha_3\leq a_v$.) Then,  we have
\ben
\kappa_v^{i(\alpha_3-a_v)}\lesssim \rho_v^{\alpha_3-a_v}.
\een
Then, by (\ref{eqn.kv1}), we have
\be\label{eqn.av0}
2^{im}\kappa_v^{i\alpha_3}\kappa_{ev}^{ia_{e}}\lesssim \rho_v^{\alpha_3-a_v}\kappa_{ev}^{ia_{e}}\lesssim \rho_v^{\alpha_3-a_v+a_{e}}.
\ee
(II) ($(1-a^{-1}_va_{ev})(m+1)<\alpha_3\leq m+1$.) 
Note that $0<a_v\leq m$. Therefore, by (\ref{eqn.kev1}), we have
\ben
\kappa_v^{i\alpha_3}=2^{-im\alpha_3/a_v}\leq 2^{-im\alpha_3/a_{ev}+ima_V/a_{ev}-im}=\kappa_{ev}^{i(\alpha_3-a_V+a_{ev})}.
\een
Note that $\alpha_3-a_V+a_{ev}>0$, therefore, 
\be\label{eqn.av1}
2^{im}\kappa_v^{i\alpha_3}\kappa_{ev}^{ia_{e}}\leq 2^{im}\kappa_{ev}^{i(\alpha_3-a_V+a_{ev})}\kappa_{ev}^{ia_{e}}\lesssim \rho_v^{\alpha_3-a_V+a_{e}}.
\ee
(III) ($a_v<\alpha_3\leq (1-a_v^{-1}a_{ev})(m+1)$.) If $a_{ev}=a_v$, we have $(1-a_v^{-1}a_{ev})(m+1)=0$, and therefore such $\alpha_3$ does not exist. Thus, we only need to consider the case $a_{ev}<a_v$. Note that $\alpha_3-a_V+a_{ev}\leq 0$ and $a_{ev}\leq a_e$.  Therefore, by (\ref{eqn.kev1}), we have
\be\label{eqn.av2}
2^{im}\kappa_v^{i\alpha_3}\kappa_{ev}^{ia_{e}}= \kappa_v^{i\alpha_3}\kappa_{ev}^{i(a_e-a_{ev})}  \lesssim \rho_v^{\alpha_3-a_V+a_{ev}}\rho_v^{(a_e-a_{ev})}=\rho_v^{\alpha_3-a_V+a_{e}}.
\ee
Therefore, choosing $a_V$ as in (\ref{eqn.av}), by (\ref{eqn.av-1}) --  (\ref{eqn.av2}), we have shown that 
\be\label{eqn.xxx}
\|\partial_x(u-u_I)\|_{L^2(L_{ev,i})}^2&\leq&C2^{-2mn}\sum_{|\alpha_\perp|+\alpha_3\leq m+1}\|\rho_v^{\alpha_3-a_V+a_{e}}\rho_{e}^{|\alpha_\perp |-1-a_e}\partial^{\alpha_\perp}\partial_z^{\alpha_3}u\|^2_{L^2(L_{ev,i})}.\label{eqn.av-2}
\ee

In the $y$-direction, with a similar process, we obtain
\be
\|\partial_y(u-u_I)\|_{L^2(L_{ev,i})}^2&\leq&C2^{-2mn}\sum_{|\alpha_\perp|+\alpha_3\leq m+1}\|\rho_v^{\alpha_3-a_V+a_{e}}\rho_{e}^{|\alpha_\perp |-1-a_e}\partial^{\alpha_\perp}\partial_z^{\alpha_3}u\|^2_{L^2(L_{ev,i})}.\label{eqn.av-2}
\ee

In the $z$-direction, by the mapping  (\ref{eqn.mapt1111}),  the scaling argument, and Corollary \ref{cor.ete},  we have
\be
\|\partial_z(u-u_I)\|_{L^2(L_{ev, i})}^2&=& \kappa_v^{-i}\kappa_{ev}^{2i}\|\partial_{\hat z}(\hat u-\hat u_I)\|^2_{L^2(\hat L)}\nonumber\\
&\leq& C2^{2m(i-n)}\kappa_v^{-i}\kappa_{ev}^{2i}\sum_{|\alpha_\perp|+\alpha_3\leq m+1}\| \rho_{\hat e}^{|\alpha_\perp|-1-a_e}\partial^{\alpha_\perp}\partial_{\hat z}^{\alpha_3} \hat u\|^2_{L^2(\hat L)} \nonumber \\
&\leq& C2^{-2mn}\sum_{|\alpha_\perp|+\alpha_3\leq m+1}2^{{2im}}\kappa_v^{2i(\alpha_3-1)}\kappa_{ev}^{2i(1+a_e)}\|\rho_{e}^{|\alpha_\perp|-1-a_e}\partial^{\alpha_\perp}\partial_z^{\alpha_3}u\|^2_{L^2(L_{ev, i})}\nonumber\\
&\leq& C2^{-2mn}\sum_{|\alpha_\perp|+\alpha_3\leq m+1}2^{{2im}}\kappa_v^{2i\alpha_3}\kappa_{ev}^{2ia_e}\|\rho_{e}^{|\alpha_\perp|-1-a_e}\partial^{\alpha_\perp}\partial_z^{\alpha_3}u\|^2_{L^2(L_{ev, i})}\nonumber\\
&\leq& C2^{-2mn}\sum_{|\alpha_\perp|+\alpha_3\leq m+1}\|\rho_v^{\alpha_3-a_V+a_e}\rho_{e}^{|\alpha_\perp |-1-a}\partial^{\alpha_\perp}\partial_z^{\alpha_3}u\|^2_{L^2(L_{ev,i})},\label{eqn.zzz}
\ee
where the last inequality follows from the analysis in (\ref{eqn.av0}) -- (\ref{eqn.av2}).

Hence, the proof is completed by the estimates in (\ref{eqn.xxx}) -- (\ref{eqn.zzz}).
\end{proof}

Then, we are ready to obtain the interpolation error estimate on the entire $ev$-tetrahedron $T_{(0)}$.

\begin{corollary}\label{cor.evt} Let $T_{(0)}\in\maT_0$ be an $ev$-tetrahedron  as in Theorem \ref{lem.eve}. Recall $a_V$ from (\ref{eqn.av}). Suppose 
$$\sum_{|\alpha_\perp|+\alpha_3\leq m+1}\|\rho_v^{\alpha_3-a_V+a_{e}}\rho_{e}^{|\alpha_\perp |-1-a_e}\partial^{\alpha_\perp}\partial_z^{\alpha_3}u\|^2_{L^2(T_{(0)})}<\infty.$$ Let  $u_I$ be the nodal interpolation on $\maT_n$. Then, we have 
$$
|u-u_I|^2_{H^1(T_{(0)})}\leq Ch^{2m}\sum_{|\alpha_\perp|+\alpha_3\leq m+1}\|\rho_v^{\alpha_3-a_V+a_{e}}\rho_{e}^{|\alpha_\perp |-1-a_e}\partial^{\alpha_\perp}\partial_z^{\alpha_3}u\|^2_{L^2(T_{(0)})},
$$
where $h=2^{-n}$ and $C$ depends on $T_{(0)}$ and $m$.\end{corollary}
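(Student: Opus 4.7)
The plan is to follow the same two-stage pattern used in Corollaries \ref{cor.vte} and \ref{cor.ete}: first invoke Theorem \ref{lem.eve} to sum the interpolation error over the layers $L_{ev,i}$ with $0\le i<n$, and then deal separately with the innermost layer $L_{ev,n}$, which is the small $ev$-tetrahedron generated at the $n$-th refinement and contains the singular vertex $x_0$.

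For the bulk of the argument, summing the estimate of Theorem \ref{lem.eve} over $0\le i<n$ immediately produces a bound on $|u-u_I|_{H^1(T_{(0)}\setminus L_{ev,n})}$ by the weighted norm on $T_{(0)}\setminus L_{ev,n}$, with constant $Ch^{2m}$, which is exactly the desired form on that subdomain.

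The remaining work is the estimate on the last layer $L_{ev,n}$. I would use the mapping $\mathbf B_{ev,n}$ from Lemma \ref{lem.nnnew} to transport $L_{ev,n}$ onto the reference element $\hat T$, writing $\hat u(\hat x,\hat y,\hat z)=u(x,y,z)$. Note that $\rho_v(x,y,z)\sim \kappa_v^n$ and $\rho_e(x,y,z)=\kappa_{ev}^n\rho_{\hat e}(\hat x,\hat y,\hat z)$ throughout $L_{ev,n}$. Since $\hat u_I$ is not well-defined at $\hat x_0$, I would introduce a smooth cutoff $\chi$ on $\hat T$ with $\chi=0$ in a neighborhood of $\hat x_0$ and $\chi=1$ at every other Lagrange node, so that $(\chi\hat u)_I=\hat u_I$. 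The usual triangle inequality decomposition $\hat u-\hat u_I=(\hat u-\chi\hat u)+(\chi\hat u-(\chi\hat u)_I)$ together with the standard interpolation estimate on the shape-regular $\hat T$ then yields
\begin{equation*}
|\hat u-\hat u_I|_{H^1(\hat T)}^2 \le C\big(\|\hat u\|_{H^1(\hat T)}^2+\sum_{|\alpha_\perp|+\alpha_3\le m+1}\|\rho_{\hat e}^{|\alpha_\perp|-1-a_e}\partial^{\alpha_\perp}\partial_{\hat z}^{\alpha_3}\hat u\|_{L^2(\hat T)}^2\big),
\end{equation*}
the extra $\rho_{\hat e}^{-a_e}$ weight being absorbable since $\chi\hat u$ vanishes near both $\hat x_0$ and (by construction of the Lagrange nodes along the singular edge through the graded refinement toward $\hat x_0$) along enough of $\hat x_0\hat x_1$ to control this integrable singularity.

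Scaling back through $\mathbf B_{ev,n}$ using the Jacobian and derivative formulas analogous to \eqref{eqn.etr2}, with $\kappa_v^{-n}\lesssim \rho_v^{-1}$ and $\kappa_{ev}^{-n}\lesssim \rho_v^{-1}$ on $L_{ev,n}$, gives prefactors of the form $2^{in}\kappa_v^{i\alpha_3}\kappa_{ev}^{ia_e}$ (with $i=n$) exactly as in the estimates (\ref{eqn.av-1})--(\ref{eqn.av2}) of Theorem \ref{lem.eve}. Applying the same three-case analysis on $\alpha_3$ relative to $a_v$ and $(1-a_v^{-1}a_{ev})(m+1)$ converts these prefactors into the weight $\rho_v^{\alpha_3-a_V+a_e}$, producing the bound $Ch^{2m}\sum\|\rho_v^{\alpha_3-a_V+a_e}\rho_e^{|\alpha_\perp|-1-a_e}\partial^{\alpha_\perp}\partial_z^{\alpha_3}u\|_{L^2(L_{ev,n})}^2$. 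Adding this to the sum over the outer layers completes the proof.

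The main obstacle I anticipate is handling the derivative in the $z$-direction on $L_{ev,n}$, because the off-diagonal entries $b_1\kappa_{ev}^{-n},b_2\kappa_{ev}^{-n}$ in $\mathbf B_{ev,n}$ mix the $\partial_{\hat z}$ derivative into $\partial_x,\partial_y$ upon scaling, and the Jacobian factor $\kappa_v^{-n}\kappa_{ev}^{2n}$ differs from the $e$-tetrahedron case. Verifying that the three-case analysis on $\alpha_3$ still closes—in particular that the extra $\kappa_{ev}^{2n}$ from the $\partial_z$ computation combines with the case-dependent estimate $2^{nm}\kappa_v^{n\alpha_3}\kappa_{ev}^{na_e}\lesssim \rho_v^{\alpha_3-a_V+a_e}$ to give precisely the claimed weight, with no loss of a full power of $h$—is the delicate bookkeeping that will need to be executed carefully.
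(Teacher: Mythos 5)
Your overall architecture matches the paper's proof: sum Theorem \ref{lem.eve} over the layers $L_{ev,i}$, $0\le i<n$, then treat the last layer $L_{ev,n}$ by mapping it to $\hat T$ via $\mathbf B_{ev,n}$ and using a cutoff so that $(\chi\hat u)_I=\hat u_I$. However, the last-layer argument as you set it up has a genuine gap. First, your cutoff vanishes only near the vertex $\hat x_0$, whereas $L_{ev,n}$ contains a whole segment of the singular edge $e$, so $\hat u$ carries edge singularities along all of $\hat e=\hat x_0\hat x_1$, not just at $\hat x_0$. For $|\alpha_\perp|\ge 2$ the admissible weight $\rho_{\hat e}^{|\alpha_\perp|-1-a_e}$ has a positive exponent, i.e.\ the hypothesis allows $\partial^{\alpha_\perp}\partial_{\hat z}^{\alpha_3}\hat u$ to blow up at $\hat e$; hence $|\chi\hat u|_{H^{m+1}(\hat T)}$ cannot be controlled by the weighted norm unless $\chi\equiv 0$ in a neighborhood of the entire edge, which is exactly what the paper does (as in Corollary \ref{cor.ete}). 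Your parenthetical justification ("$\chi\hat u$ vanishes \dots along enough of $\hat x_0\hat x_1$") does not hold: $\hat u$ vanishes on $\hat e$ by the boundary condition, but its transverse derivatives need not, and the problem is not the integrability of $\rho_{\hat e}^{-a_e}$ but the unboundedness of the high-order derivatives of $\hat u$ near $\hat e$.

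Second, your reference-element bound drops the vertex weight, and you then invoke "the same three-case analysis" of Theorem \ref{lem.eve} with $i=n$. That analysis, (\ref{eqn.av0})--(\ref{eqn.av2}), relies on the two-sided bound $\kappa_{ev}^{i}\lesssim\rho_v\lesssim\kappa_v^{i}$, which is valid on the layers $L_{ev,i}$ with $i<n$ but fails on $L_{ev,n}$, where $\rho_v\to 0$ at the vertex; your claim that $\rho_v\sim\kappa_v^n$ throughout $L_{ev,n}$ is false. Since exponents $\alpha_3-a_V+a_e>0$ do occur (e.g.\ $\alpha_3=m+1$), the pointwise conversion of the scaling prefactor $2^{nm}\kappa_v^{n\alpha_3}\kappa_{ev}^{na_e}$ into $\rho_v^{\alpha_3-a_V+a_e}$ breaks down near the vertex without an accompanying factor of $\rho_{\hat v}$. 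The paper's proof avoids this by keeping the weight $\rho_{\hat v}^{\alpha_3-a_V+a_e}$ in the reference estimate (\ref{eqn.equ1}) (legitimate because all weights are comparable to constants on $\operatorname{supp}\chi$) and then running a modified case analysis (\ref{eqn.rhov1})--(\ref{eqn.rhov3}) based on the relation $\kappa^n_{ev}\rho_{\hat v}\lesssim\rho_v\lesssim\kappa^n_v\rho_{\hat v}$ of (\ref{eqn.hatv}). Both defects are repairable by adopting these two modifications, but as written the proposal does not close; the $z$-direction bookkeeping you flagged is comparatively routine once the correct cutoff and weights are in place.
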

\begin{proof}
By Theorem \ref{lem.eve}, it suffices to show 
$$
|u-u_I|^2_{H^1(L_{ev,n})}\leq C2^{-2mn}\sum_{|\alpha_\perp|+\alpha_3\leq m+1}\|\rho_v^{\alpha_3-a_V+a_{e}}\rho_{e}^{|\alpha_\perp |-1-a_e}\partial^{\alpha_\perp}\partial_z^{\alpha_3}u\|^2_{L^2(L_{ev,n})}.
$$

By Lemma \ref{lem.nnnew},  $\mathbf B_{ev, n}(L_{ev,n})=\hat T$. For $(x,y , z)\in L_{ev,n}$, let $(\hat x, \hat y, \hat z)\in \hat T$ be its image under $\mathbf B_{ev,n}$. For a function $v$ on $L_{ev,n}$, we define $\hat v$ on $\hat T$ by
$$
\hat v(\hat x, \hat y, \hat z):=v(x, y, z).
$$
Now, let $\chi$ be a smooth cutoff function on $\hat T$ such that
$\chi=0$ in a neighborhood of the edge $\hat e:=\hat x_0\hat x_1$ and $=1$ at every other node of
$\hat T$. Let $\rho_{\hat{v}}$ be the distance from $(\hat x, \hat y, \hat z)$ to $\hat x_0$. Then, by (\ref{eqn.mapt1111}),
\be\label{eqn.hatv}\kappa^n_{ev}\rho_{\hat{v}}(\hat x,\hat y, \hat z)\lesssim\rho_v(x,y,z)\lesssim \kappa^n_v\rho_{\hat{v}}(\hat x,\hat y, \hat z),
\ee  and $\kappa^n_{ev}\rho_{\hat e}(\hat x,\hat y,\hat z)=\rho_{e}(x, y, z)$. Let $\hat u_I$ be the interpolation of $\hat u$ on the  reference tetrahedron $\hat T$.  Since $\chi\hat u=0$ in the neighborhood of $\hat e$, $(\chi\hat u)_{I}=\hat u_I$ and 
\be\label{eqn.equ1}
|\chi\hat u|_{H^{m+1}(\hat T)}\leq C\sum_{|\alpha_\perp|+\alpha_3\leq m+1}\|\rho_{\hat e}^{|\alpha_\perp|-1-a_e}\rho_{\hat v}^{\alpha_3-a_V+a_{e}}\pa^{\alpha_\perp}\pa_{\hat z}^{\alpha_3}\hat u\|^2_{L^2(\hat T)}.
\ee 
Note that by (\ref{eqn.av}), $a_V\geq a_{ev}$. Define $\hat w:=\hat u-\chi\hat u$.   Then, by  the usual interpolation error estimate,  $\rho_{\hat{e}}\lesssim \rho_{\hat{v}}$, and (\ref{eqn.equ1}), we have
\begin{eqnarray}
    |\hat u - \hat u_{I} |_{H^1
    (\hat T)}& =&  |\hat w+\chi \hat u - \hat u_{I} |_{H^1 (\hat T)}  \leq  |\hat w|_{H^1
    (\hat T)}+|\chi\hat u-\hat u_{I}|_{H^1
    (\hat T)}\nonumber\\
  &=& |\hat w|_{H^1 (\hat T)}+|\chi\hat u -
  (\chi\hat u)_{I}|_{H^1 (\hat T)}\leq C(\|\hat u\|_{H^1 (\hat T)}+|\chi\hat u|_{H^{m+1}(\hat T)}),\nonumber\\
  &\leq& C\sum_{|\alpha_\perp|+\alpha_3\leq m+1}\|\rho_{\hat e}^{|\alpha_\perp|-1-a_e}\rho_{\hat v}^{\alpha_3-a_V+a_{e}}\pa^{\alpha_\perp}\pa_{\hat z}^{\alpha_3}\hat u\|^2_{L^2(\hat T)}, \label{eqn.new11111}
\end{eqnarray}
where $C$ depends on $m$ and, through $\chi$, the nodes on $\hat T$. Then, using (\ref{eqn.new11111}), the scaling argument based on (\ref{eqn.mapt1111}), and the relation $\rho_{\hat e}(\hat x, \hat y,\hat z)=\kappa_{ev}^{-n}\rho_e(x, y, z)$, we have
 \begin{eqnarray}
\|\partial_x(u-u_I)\|_{L^2(L_{ev,n})}^2&\leq& C\kappa_v^{n}\big(\|\partial_{\hat x}(\hat u-\hat u_I)\|_{L^2(\hat T)}^2+\|\partial_{\hat z}(\hat u-\hat u_I)\|^2_{L^2(\hat T)}\big)\nonumber\\
&\leq& C\kappa_v^{n}\sum_{|\alpha_\perp|+\alpha_3\leq m+1}\|\rho_{\hat v}^{\alpha_3-a_V+a_{e}}\rho_{\hat e}^{|\alpha_\perp|-1-a_e}\pa^{\alpha_\perp}\pa^{\alpha_3}_{\hat z}\hat u\|_{L^2(\hat T)}^2  \nonumber\\
&\leq& C\sum_{|\alpha_\perp|+\alpha_3\leq m+1}\kappa_v^{2n\alpha_3}\kappa_{ev}^{2na_e}\|\rho_{\hat v}^{\alpha_3-a_V+a_{e}}\rho_{e}^{|\alpha_\perp|-1-a_e}\pa^{\alpha_\perp}\pa^{\alpha_3}_{z} u\|_{L^2(L_{ev,n})}^2 \nonumber \\
&\leq& C 2^{-2mn}\sum_{|\alpha_\perp|+\alpha_3\leq m+1}2^{2nm}\kappa_v^{2n\alpha_3}\kappa_{ev}^{2na_e}\|\rho_{\hat v}^{\alpha_3-a_V+a_{e}}\rho_{e}^{|\alpha_\perp|-1-a_e}\pa^{\alpha_\perp}\pa^{\alpha_3}_{z} u\|_{L^2(L_{ev,n})}^2. \label{eqn.rhov0}
\end{eqnarray}
Then, we consider the following cases. \\
(I) ($\alpha_3\leq a_v$.) By (\ref{eqn.kv1}), (\ref{eqn.hatv}), $a_V\geq a_v$, and $\alpha_3-a_v\leq 0$, we have 
\be\label{eqn.rhov1}
2^{nm}\kappa_v^{n\alpha_3}\kappa_{ev}^{na_e}\rho_{\hat v}^{\alpha_3-a_V+a_e}&=&\kappa_v^{n(\alpha_3-a_v)}\rho_{\hat v}^{\alpha_3-a_v}\kappa_{ev}^{na_e}\rho_{\hat v}^{a_e}\rho_{\hat v}^{a_v-a_V}\nonumber\\
&\lesssim& \rho_{v}^{\alpha_3-a_v+a_e}\rho_{\hat v}^{a_v-a_V}\lesssim \rho_{v}^{\alpha_3-a_V+a_e}.
\ee
(II) ($(1-a^{-1}_va_{ev})(m+1)<\alpha_3\leq m+1$.) Following  the calculation in (\ref{eqn.av1}), by (\ref{eqn.kev1}) and (\ref{eqn.hatv}), we have 
\be\label{eqn.rhov2}
2^{nm}\kappa_v^{n\alpha_3}\kappa_{ev}^{na_{e}}\rho_{\hat v}^{\alpha_3-a_V+a_e}&\leq& 2^{nm}\kappa_{ev}^{n(\alpha_3-a_V+a_{ev})}\kappa_{ev}^{na_{e}}\rho_{\hat v}^{\alpha_3-a_V+a_e}\nonumber\\
&=&\kappa_{ev}^{n(\alpha_3-a_V)}\kappa_{ev}^{na_{e}}\rho_{\hat v}^{\alpha_3-a_V+a_e}\lesssim \rho_v^{\alpha_3-a_V+a_{e}}.
\ee
(III) ($a_v<\alpha_3\leq (1-a_v^{-1}a_{ev})(m+1)$.) If $a_{ev}=a_v$, we have $(1-a_v^{-1}a_{ev})(m+1)=0$, and therefore such $\alpha_3$ does not exist. Thus, we only need to consider the case $a_{ev}<a_v$. Note that $\alpha_3-a_V+a_{ev}\leq 0$ and $a_{ev}\leq a_e$.  Therefore, by (\ref{eqn.kev1}) and (\ref{eqn.hatv}), we have
\be\label{eqn.rhov3}
2^{nm}\kappa_v^{n\alpha_3}\kappa_{ev}^{na_{e}}\rho_{\hat v}^{\alpha_3-a_V+a_e}&=&\kappa_v^{n\alpha_3}\kappa_{ev}^{n(a_e-a_{ev})} \rho_{\hat v}^{\alpha_3-a_V+a_{e}}\nonumber\\
&\leq& \kappa_v^{n(\alpha_3-a_V+a_{ev})} \rho_{\hat v}^{\alpha_3-a_V+a_{ev}}\kappa_{ev}^{n(a_e-a_{ev})} \rho_{\hat v}^{a_{e}-{a_{ev}}}
\lesssim \rho_v^{\alpha_3-a_V+a_{e}}.
\ee
Therefore, by (\ref{eqn.rhov0}) -- (\ref{eqn.rhov3}), we conclude 
\be\label{eqn.final1}
\|\partial_x(u-u_I)\|^2_{L^2(L_{ev,n})}\leq Ch^{2m}\sum_{|\alpha_\perp|+\alpha_3\leq m+1}\|\rho_{v}^{\alpha_3-a_V+a_{e}}\rho_{e}^{|\alpha_\perp|-1-a_e}\pa^{\alpha_\perp}\pa^{\alpha_3}_{z} u\|^2_{L^2(L_{ev,n})}.
\ee

A similar error estimate in the $y$-direction leads to
\be\label{eqn.final2}
\|\partial_y(u-u_I)\|^2_{L^2(L_{ev,n})}\leq Ch^{2m}\sum_{|\alpha_\perp|+\alpha_3\leq m+1}\|\rho_{v}^{\alpha_3-a_V+a_{e}}\rho_{e}^{|\alpha_\perp|-1-a_e}\pa^{\alpha_\perp}\pa^{\alpha_3}_{z} u\|^2_{L^2(L_{ev,n})}.
\ee

In the $z$-direction, using (\ref{eqn.new11111}), $\kappa_v\geq \kappa_{ev}$, the scaling argument based on (\ref{eqn.mapt1111}),  (\ref{eqn.ke1}), and (\ref{eqn.rhov1}) -- (\ref{eqn.rhov3}), we have
\be
\|\partial_z(u-u_I)\|_{L^2(L_{ev,n})}^2&=& \kappa_v^{-n}\kappa_{ev}^{2n}\|\partial_{\hat z}(\hat u-\hat u_I)\|^2_{L^2(\hat T)} \nonumber \\
&\leq& C\kappa_v^{-n}\kappa_{ev}^{2n}\sum_{|\alpha_\perp|+\alpha_3\leq m+1}\|\rho_{\hat v}^{\alpha_3-a_V+a_{e}}\rho_{\hat e}^{|\alpha_\perp|-1-a_e}\pa^{\alpha_\perp}\pa^{\alpha_3}_{\hat z}\hat u\|_{L^2(\hat T)}^2  \nonumber\\ 
&\leq& C\sum_{|\alpha_\perp|+\alpha_3\leq m+1}\kappa_v^{2n(\alpha_3-1)}\kappa_{ev}^{2n(1+a_e)}\|\rho_{\hat v}^{\alpha_3-a_V+a_{e}}\rho_{e}^{|\alpha_\perp|-1-a_e}\pa^{\alpha_\perp}\pa^{\alpha_3}_{z} u\|_{L^2(L_{ev,n})}^2  \nonumber\\
&\leq& C 2^{-2mn}\sum_{|\alpha_\perp|+\alpha_3\leq m+1}2^{2nm}\kappa_v^{2n\alpha_3}\kappa_{ev}^{2na_e}\|\rho_{\hat v}^{\alpha_3-a_V+a_{e}}\rho_{e}^{|\alpha_\perp|-1-a_e}\pa^{\alpha_\perp}\pa^{\alpha_3}_{z} u\|_{L^2(L_{ev,n})}^2 \nonumber \\
&\leq &Ch^{2m}\sum_{|\alpha_\perp|+\alpha_3\leq m+1}\|\rho_{v}^{\alpha_3-a_V+a_{e}}\rho_{e}^{|\alpha_\perp|-1-a_e}\pa^{\alpha_\perp}\pa^{\alpha_3}_{z} u\|_{L^2(L_{ev,n})}^2. \label{eqn.final3}
\ee

Then, the proof is completed by (\ref{eqn.final1}) -- (\ref{eqn.final3}).
\end{proof}

Then, we formulate our interpolation error analysis for the anisotropic mesh on $\Omega$.
\begin{theorem}\label{thm.mmm}
Recall $\ba$ in (\ref{eqn.aell}). Let $a_{V_T}$ be the parameter (\ref{eqn.av}) associated to the initial $ev$-tetrahedron $T\in\maT_0$. For each vertex  $v_\ell\in\maV$, let $U_\ell$ be the union of the initial $ev$-tetrahedra that have $v_\ell$ as the singular vertex. Define $\bsigma=(\sigma_1, \cdots, \sigma_{N_s})$, such that 
\ben
\sigma_\ell=\left\{\begin{array}{ll}
\max_{T\in U_\ell}(a_{V_T}),\qquad \qquad &1\leq \ell\leq N_v;\\
a_\ell, \qquad \qquad &N_v< \ell\leq N_s.
\end{array}\right.
\een
Let $\maT_n$ be the triangulation defined in Algorithm \ref{alg.n1}. For $u\in\mathcal M^{m+1}_{\bsigma+\bone}(\Omega)$, let $u_I$ be its nodal interpolation on $\maT_n$. Then, we have
$$
|u-u_I|_{H^1(\Omega)}\leq Ch^{m}\|u\|_{\mathcal M^{m+1}_{\bsigma+\bone}(\Omega)},
$$
where $h=2^{-n}$. In turn, for the finite element solution $u_n$ defined in (\ref{eqn.fems1}), we have
$$|u-u_n|_{H^1(\Omega)}\leq C \dim(S_n)^{-m/3}\|u\|_{\mathcal M^{m+1}_{\bsigma+\bone}(\Omega)},$$
where $dim(S_n)$ is the  dimension of the finite element space associated with $\maT_n$. In both estimates, the constant $C$ depends on $\maT_0$ and $m$, but not on $n$.
\end{theorem}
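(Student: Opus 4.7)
The plan is to reduce the global estimate to the local per-initial-tetrahedron estimates already proved in Section~\ref{sec4} and then combine with Cea's lemma. Write $\Omega=\bigcup_{j=1}^{J}T_{(0),j}$, the partition by the initial triangulation $\maT_0$, and classify each $T_{(0),j}$ as $o$-, $v$-, $v_e$-, $e$-, or $ev$-tetrahedron according to Definition~\ref{def.n1}. Since $|\cdot|^2_{H^1(\Omega)}=\sum_j|\cdot|^2_{H^1(T_{(0),j)})}$, it is enough to bound each summand by $Ch^{2m}\|u\|^2_{\mathcal M^{m+1}_{\bsigma+\bone}(T_{(0),j})}$, whose sum gives the claim.

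On an $o$-tetrahedron I invoke Lemma~\ref{lem.ote}, on a $v$- or $v_e$-tetrahedron Corollary~\ref{cor.vte}, and on an $e$-tetrahedron Corollary~\ref{cor.ete}; in all three cases the right-hand side is $Ch^{2m}\|u\|^2_{\mathcal M^{m+1}_{\ba+\bone}(T_{(0),j})}$. Comparing indices, by construction $a_\ell\leq\sigma_\ell$ for every $\ell$ (for edge indices the two agree, and for a vertex index $\sigma_\ell=\max_{T\in U_\ell}a_{V_T}\ge a_{ev}=a_\ell$, using $a_V\ge a_{ev}$ which follows directly from $a_{ev}\le a_v$ in \eqref{eqn.av}). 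Since on each subdomain the weight $\rho^{|\alpha|-\mu}$ grows as $\mu$ increases near the singular set while $\rho\lesssim 1$, we obtain $\|u\|_{\mathcal M^{m+1}_{\ba+\bone}(T_{(0),j})}\le C\|u\|_{\mathcal M^{m+1}_{\bsigma+\bone}(T_{(0),j})}$, so the three easy types are dealt with.

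The main obstacle is the $ev$-tetrahedron case, where Corollary~\ref{cor.evt} produces the non-standard weighted bound
\[
\sum_{|\alpha_\perp|+\alpha_3\le m+1}\|\rho_v^{\alpha_3-a_V+a_e}\rho_e^{|\alpha_\perp|-1-a_e}\pa^{\alpha_\perp}\pa_z^{\alpha_3}u\|^2_{L^2(T_{(0),j})},
\]
rather than the anisotropic norm of Definition~\ref{def.newweight}. The key reconciliation is a direct rewrite using $\rho_{e,v}=\rho_e/\rho_v$: with $\mu_v=\sigma_v+1$ and $\mu_e=\sigma_e+1$, the integrand in $\|u\|^2_{\mathcal M^{m+1}_{\bsigma+\bone}(\mathcal O^v_e)}$ is
\[
\rho_v^{|\alpha|-\sigma_v-1}\rho_{e,v}^{|\alpha_\perp|-\sigma_e-1}\pa^\alpha u
=\rho_v^{\alpha_3-\sigma_v+\sigma_e}\rho_e^{|\alpha_\perp|-\sigma_e-1}\pa^\alpha u,
\]
and choosing $\sigma_e=a_e$, $\sigma_v=a_{V_T}$ (which is exactly how $\bsigma$ is defined, with the $\max$ handling overlaps of $ev$-tetrahedra sharing a common vertex) gives precisely the weight produced by Corollary~\ref{cor.evt}. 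Hence the $ev$-term is $\le Ch^{2m}\|u\|^2_{\mathcal M^{m+1}_{\bsigma+\bone}(T_{(0),j})}$. Summing over $j$ yields the interpolation estimate $|u-u_I|_{H^1(\Omega)}\le Ch^m\|u\|_{\mathcal M^{m+1}_{\bsigma+\bone}(\Omega)}$.

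For the finite element bound, use the continuity of $u$ at interior Lagrange nodes (noted at the start of Section~\ref{sec4}) so that $u_I\in S_n$ is well defined, then combine Cea's inequality \eqref{eqn.subo111} with the choice $v_n=u_I$. Finally, to convert $h=2^{-n}$ to $\dim(S_n)^{-1/3}$, observe that Algorithm~\ref{alg.n1} always decomposes each tetrahedron into exactly eight sub-tetrahedra, so $|\maT_n|=8^n|\maT_0|$ and $\dim(S_n)\sim|\maT_n|\sim 8^n=h^{-3}$ (the multiplicative constant absorbs the polynomial degree $m$ and the cardinality of $\maT_0$, and is independent of $n$ precisely because the refinement rule is geometrically uniform in the number of children per parent even though individual elements are anisotropic). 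Substituting $h^m=C\dim(S_n)^{-m/3}$ completes the proof.
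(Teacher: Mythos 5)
Your proposal is correct and follows essentially the same route as the paper: summing the local estimates of Lemma \ref{lem.ote} and Corollaries \ref{cor.vte}, \ref{cor.ete}, \ref{cor.evt} over the initial tetrahedra, then applying the best approximation property \eqref{eqn.subo111} with $\dim(S_n)\sim 2^{3n}$. The only difference is that you spell out what the paper compresses into ``a consequence of the definition of the weighted space'' — namely the monotonicity of the $\mathcal M$-norms in the weight index (using $\bsigma\geq\ba$) and the identification, via $\rho_{e,v}=\rho_e/\rho_v$, of the weight in Corollary \ref{cor.evt} with the $\mathcal M^{m+1}_{\bsigma+\bone}$ weight on $\mathcal O^v_e$ — and these details are accurate.
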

\begin{proof} Note that $\bsigma\geq \ba>\bzero$. Then,  the first inequality is the consequence of the definition of the weighted space $\mathcal M^{m}_{\bmu}$ and  the local interpolation error estimates on different initial tetrahedra: the $o$-tetrahedra (Lemma \ref{lem.ote}), the $v$- or $v_e$-tetrahedra (Corollary \ref{cor.vte}), the $e$-tetrahedra (Corollary \ref{cor.ete}), and the $ev$-tetrahedra (Corollary \ref{cor.evt}). 

Note that for each refinement, each tetrahedron is decomposed into 8 child tetrahedra. Therefore, the dimension of the finite element space $\dim(S_n)\sim 2^{3n}$. Thus, the second inequality follows from the best approximation property (\ref{eqn.subo111}) and  $h\sim \dim(S_n)^{-1/3}$.
\end{proof}

\begin{remark} It can be seen that for $ev$-tetrahedra, $a_V\geq a_v$. This additional regularity requirement  is needed to compensate  for the lack of the maximum angle condition in the mesh when $a_v>a_{ev}$. In the special case when $a_v=a_{ev}$,  we have $a_V=a_v$ and the new $ev$-tetrahedra generated in each refinement will satisfy the angle condition. Thus, the regularity requirement in Theorem \ref{thm.mmm} becomes $u\in\mathcal M^{m+1}_{\bsigma+\bone}(\Omega)=\mathcal M^{m+1}_{\ba+\bone}(\Omega)$.
\end{remark}

\begin{remark} Given a sufficiently smooth function $f$ in equation (\ref{eqn.n1}), the regularity of the solution $u$ (the parameters of the weighted space in the regularity estimates) depends on the geometry of the domain. See (\ref{eqn.regani}) for example. Therefore,  for a singular solution in the weighted space $\mathcal M^{m+1}_{\bsigma+\bone}(\Omega)$,   it is sufficient to choose the grading parameter $\ba$ that satisfies the condition  in Theorem \ref{thm.mmm}, in order to recover the optimal convergence rate of the finite element solution. 
\end{remark}

\section{Numerical results}\label{sec5}

In this section, using the the proposed anisotropic  finite element algorithm,  we solve the boundary value problem (\ref{eqn.n1})  on two model polyhedral domains (the prism and the Fichera corner). These domains represent typical three dimensional vertex-edge solution singularities. It will be evident that  the numerical results are align with our approximation results  
presented in Section \ref{sec4}, and thus validate our method. In both numerical tests, we use linear finite elements and let $f=1$. This is for the purpose of simplifying the demonstration of the method. High-order elements solving more complicated equations will be reported in a forthcoming paper.

\subsection{Test I} (The Prism Domain) Let $T$ be the triangle with
vertices $(0, 0), (1, 0)$, and $(0.5, 0.5)$, and let the domain be the
prism $\Omega:=\big((0, 1)^2\setminus T\big) \times(0, 1)$ (Figure \ref{fig.5}). Then, we solve equation (\ref{eqn.n1}) in the variational form (\ref{eqn.fems1}). Based on the regularity estimates in (\ref{eqn.eta}) and (\ref{eqn.regani}) the solution is in $H^2$ in the sub-region of $\Omega$ that is away from the edge $e$ where the opening angle is $3\pi/2$. Therefore, a quasi-uniform mesh in such a region will yield a first-order (optimal) convergence for the interpolation error. In the neighborhood of the edge $e$, by (\ref{eqn.eta}) and Table 1 in \cite{CDN14}, we have 
\ben\label{eqn.prism1}
 u\in\mathcal M^2_{\bsigma+\bone}, \qquad {\rm{for}}\ \sigma_e<\eta_e=2/3\ {\rm{and}}\  \sigma_v<\eta_v=13/6,
\een 
where $\sigma_v$ is the index regarding the regularity of the solution near either of the vertices (endpoints of $e$) $v$ (see (\ref{eqn.newweight})). Then, by Theorem \ref{thm.mmm}, a sufficient condition to attain the optimal convergence rate for the finite element solution is that  the mesh parameters give rise to $a_e<2/3$ and $a_V<13/6$.

\begin{figure}
\includegraphics[scale=0.175]{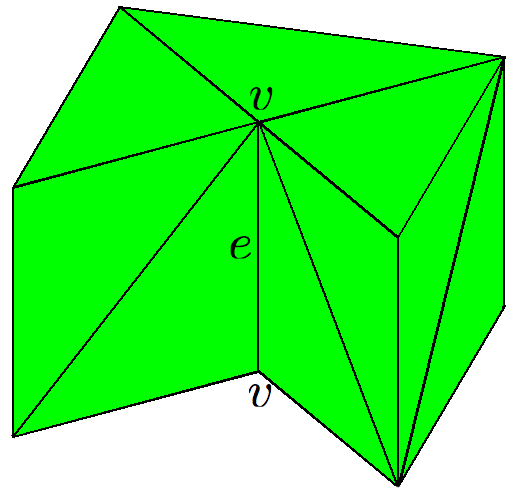}\hspace{2cm}
\includegraphics[scale=0.175]{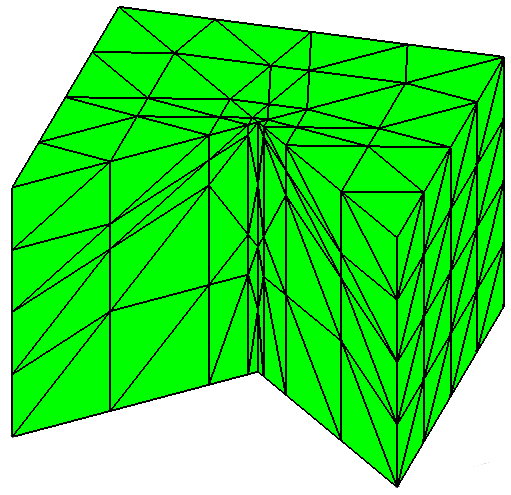}
\caption{{The prism domain: the initial triangulation (left) and the mesh after two graded refinements toward the singular edge $e$ ($\kappa_e=0.2$).}}
\label{fig.5}
\end{figure}

Recall the parameters $a_v, a_e\in(0, 1]$. Then, for linear elements, by (\ref{eqn.av}), we  have $a_V\leq 2-a_e<13/6$. Namely, the vertex $v$ shall not affect the convergence rate for any  feasible values  of $a_v$ and $a_e$, since   the regularity restriction for the vertex $v$ is always satisfied. Therefore, to improve the convergence rate, we only need to implement special edge refinement based on the value of $a_e$. Thus, in the numerical tests, we choose the parameters for the edge $e$ and for either of the vertices $v$, such that 
\ben\label{eqn.prism2} 0<a_e\leq 1 \qquad {\rm{and}}  \qquad a_v=1.
\een 
 Then, based on   Theorem \ref{thm.mmm}, in order to recover the optimal convergence rate  for the finite element solution, it suffices to choose $0<a_e<2/3$, namely, $0<\kappa_e=2^{-1/a_e}<0.353$. Recall that for $\kappa_{ev}=\kappa_e<0.5$ and $\kappa_v=2^{-1/a_v}=0.5$, the resulting mesh is graded toward the edge $e$ without special refinement for the vertex $v$. See Figure \ref{fig.5} for such graded meshes when $\kappa_e=0.2$. 

In Table \ref{tab.5.2new0}, we display the convergence rates of the finite element solution on proposed anisotropic meshes associated with different values of the grading parameter $\kappa_e$.   Here, $j$ is the level of refinements. Denote by $u_j$ the linear finite element solution on the mesh after $j$  refinements.  Since the exact solution is not known, the convergence rate is computed using the numerical solutions for successive mesh refinements 
\be\label{eqn.conv} {\rm{convergence\ rate}}=\log_2(\frac{|u_{j}-u_{j-1}|_{H^1(\Omega)}}{|u_{j+1}-u_{j}|_{H^1(\Omega)})}).
\ee 
As $j$ increases, the dimension of the discrete system is $O(2^{3j})$. Therefore, the asymptotic convergence rate in (\ref{eqn.conv}) is a reasonable indicator of the actual convergence rate for the numerical solution.

\begin{table}
\begin{center}
\begin{tabular}{|l|ll|}       \hline
\emph{$j$} & $\kappa_e= 0.1$\hspace{0.15cm} $\kappa_e= 
0.2$\hspace{0.15cm} $\kappa_e=  0.3$\hspace{0.15cm} $\kappa_e=  0.4$\hspace{0.15cm} $\kappa_e= 
0.5$&\\ \hline
\hspace{0.1cm}2 & 0.40 \hspace{.7cm} 0.46
\hspace{.7cm} 0.52 \hspace{.7cm} 0.58 \hspace{.7cm} 0.60  &\\\hline
\hspace{0.1cm}3 & 0.75 \hspace{.7cm} 0.79
\hspace{.7cm} 0.82 \hspace{.7cm} 0.84 \hspace{.7cm} 0.83  &\\\hline
\hspace{0.1cm}4 & 0.91 \hspace{.7cm} 0.93
\hspace{.7cm} 0.94 \hspace{.7cm} 0.93 \hspace{.7cm} 0.90  &\\\hline
\hspace{0.1cm}5 & 0.97 \hspace{.7cm} 0.98
\hspace{.7cm} 0.98 \hspace{.7cm} 0.96 \hspace{.7cm} 0.91  &\\\hline
\hspace{0.1cm}6 & 0.99 \hspace{.7cm} 0.99
\hspace{.7cm} 0.99 \hspace{.7cm} 0.97 \hspace{.7cm} 0.89  &\\\hline
\hspace{0.1cm}7 & 1.00 \hspace{.7cm} 1.00
\hspace{.7cm} 1.00 \hspace{.7cm} 0.97 \hspace{.7cm} 0.86  &\\\hline
\end{tabular}
\end{center}
\caption{Convergence rates for the prism domain.}
\label{tab.5.2new0}
\end{table}

It is clear from the table that the first-order convergence rate is obtained for $\kappa_e=0.1, 0.2,0.3<0.353$, while we lose the optimal convergence rate if $\kappa_e=0.4, 0.5$, both larger than the critical value $0.353$. When $\kappa_e=0.4$, that is $0.353<\kappa_e<0.5$, this choice still leads to an anisotropic mesh graded toward the singular edge, but the grading is insufficient to resolve the edge singularity in the solution, and hence does not lead to the optimal rate of convergence. These results are in strong agreement with the theoretical estimates in Section \ref{sec4}.

\begin{figure}
\includegraphics[scale=0.13]{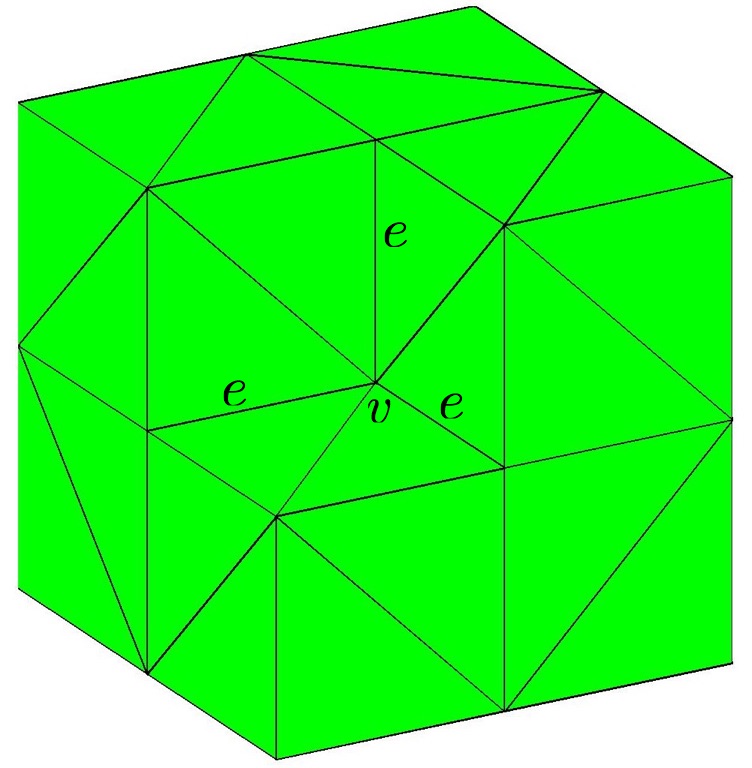}\hspace{2cm}
\includegraphics[scale=0.1]{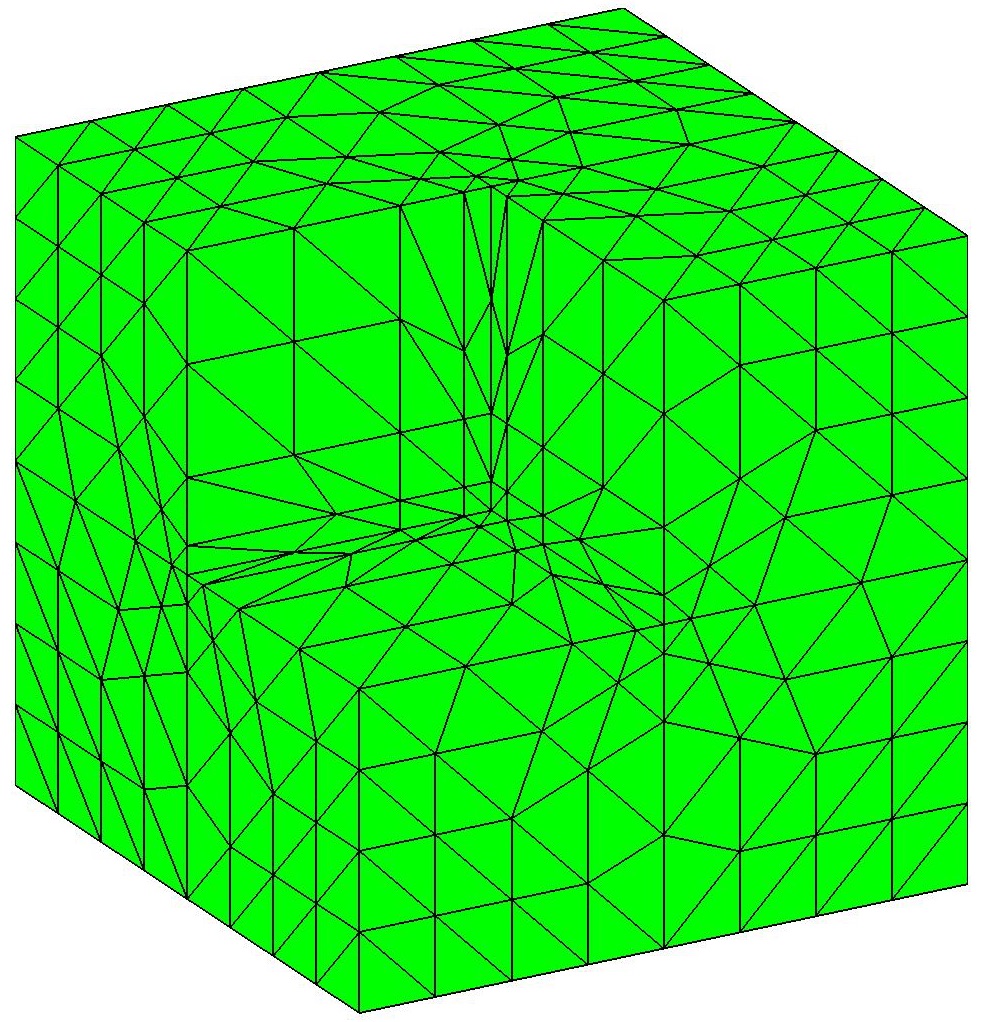}\hspace{2cm}
\includegraphics[scale=0.1]{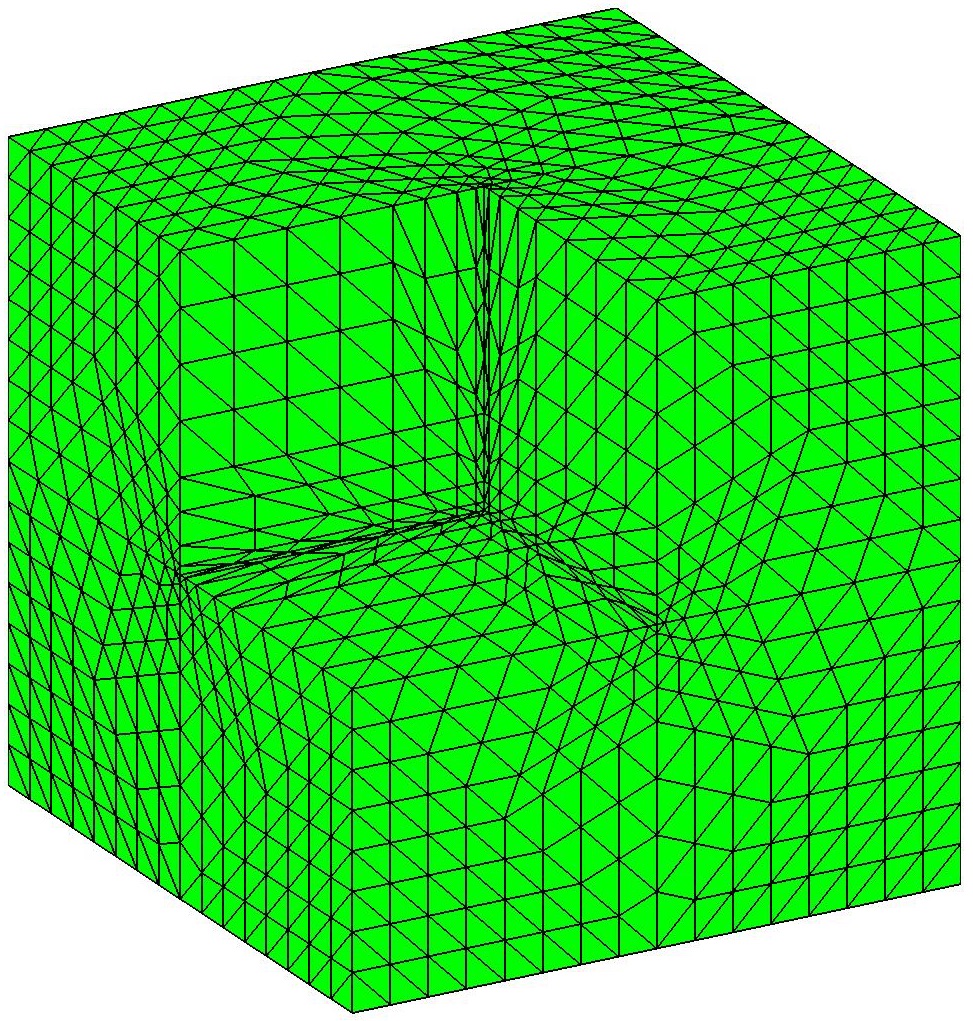}
\caption{{The Fichera corner (left -- right): the initial mesh, mesh after two refinements, mesh after three refinements ($\kappa_e=\kappa_v=0.3$).}}
\label{fig.6}
\end{figure}

\subsection{Test II} (The Fichera Corner) Let $D_0$  be the cube $(-1, 1)^3$ and  $D_1=[0, 1)^3$. Let the domain $\Omega:=D_0\setminus D_1$. Thus, the domain $\Omega$ is featured with the Fichera corner at the vertex $v$ and three adjacent edges $e$ with the opening angle $3\pi/2$ (Figure \ref{fig.6}). For a sub-region away from these three edges, the solution of equation (\ref{eqn.n1}) belongs to $H^2$, and therefore, a quasi-uniform mesh will lead to the optimal convergence rate for the interpolation error. In the neighborhood of the three edges, including the Fichera corner, by  (\ref{eqn.eta}), (\ref{eqn.regani}), and Table 1 in \cite{CDN14},  the solution satisfies
\ben\label{eqn.prism1}
 u\in\mathcal M^2_{\bsigma+\bone}, \qquad {\rm{for}}\ \sigma_e<\eta_e=2/3\ {\rm{and}}\  \sigma_v<\eta_v\approx 0.954.
\een 
For the endpoints of the three marked edges, which are not at the Fichera corner, the upper bound of the regularity index is $13/6$. For the same reason as in Test I, these vertices shall not affect the convergence rate for feasible mesh parameters. 
Then, by Theorem \ref{thm.mmm}, the sufficient condition to attain the optimal convergence rate for the finite element solution is that the mesh parameters give rise to $a_e < 2/3$ for the three marked edges and $a_V <0.954$ for the vertex $v$. There are many possible values of $a_e$ and $a_v$ that fulfill this requirement. To illustrate our method, in Table \ref{tab.5.2new1}, we list the convergence rates of the finite element solutions on  anisotropic meshes with $a_e=a_v=0.576$ (accordingly, $\kappa_e=\kappa_v=0.3$) and on quasi-uniform meshes $a_e=a_v=1$ (accordingly, $\kappa_e=\kappa_v=0.5$). The rates are computed using numerical solutions as in (\ref{eqn.conv}).

\begin{table}
\begin{tabular}{|l|l|l|}       \hline
\emph{$j$} & $\kappa_v= 0.3$\hspace{0.15cm} $\kappa_e= 
0.3$\hspace{0.15cm} &$\kappa_v=  0.5$\hspace{0.15cm} $\kappa_e=  0.5$\hspace{0.15cm}\\ \hline
\hspace{0.1cm}2 & \hspace{1.15cm}0.64 & \hspace{1.15cm}0.68  \\\hline
\hspace{0.1cm}3 &  \hspace{1.15cm}0.84& \hspace{1.15cm}0.82  \\\hline
\hspace{0.1cm}4 &  \hspace{1.15cm}0.94 & \hspace{1.15cm}0.86  \\\hline
\hspace{0.1cm}5 &  \hspace{1.15cm}0.97 & \hspace{1.15cm}0.86 \\\hline
\hspace{0.1cm}6 &  \hspace{1.15cm}0.99 & \hspace{1.15cm}0.83  \\\hline
\hspace{0.1cm}7 &  \hspace{1.15cm}0.99 & \hspace{1.15cm}0.80  \\\hline
\end{tabular}
\caption{Convergence rates for the Fichera corner.}
\label{tab.5.2new1}
\end{table}

In the case $\kappa_e=\kappa_v=0.3$, by (\ref{eqn.av}), we have $a_e=0.576<2/3$ and $a_V=a_v=0.576<0.954$. Therefore, by Theorem \ref{thm.mmm}, we expect to obtain the first-order optimal convergence rate in the finite element approximation. As for the quasi-uniform mesh ($\kappa_e=\kappa_v=0.5$), since the solution is not globally in $H^2$, by (\ref{eqn.subo}), we expect a sub-optimal convergence rate. It is clear that the numerical results in Table \ref{tab.5.2new1}  validate this theoretical prediction and hence verify the theory.

\protect\bibliographystyle{abbrv}
    \protect\bibliographystyle{alpha}
\def\cprime{$'$} \def\ocirc#1{\ifmmode\setbox0=\hbox{$#1$}\dimen0=\ht0
  \advance\dimen0 by1pt\rlap{\hbox to\wd0{\hss\raise\dimen0
  \hbox{\hskip.2em$\scriptscriptstyle\circ$}\hss}}#1\else {\accent"17 #1}\fi}

\end{document}